\newcommand{\F}{\mathbb{F}}
\newcommand{\N}{\mathbb{N}}
\renewcommand{\P}{\mathbb{P}}
\newcommand{\Q}{\mathbb{Q}}
\newcommand{\RR}{\mathbb{R}}
\newcommand{\R}{\mathbb{R}}
\newcommand{\Z}{\mathbb{Z}}
\newcommand{\comment}[1]{}
\def\imod#1{\allowbreak\mkern10mu\left({\operator@font mod}\,\,#1\right)}
\begin{document}

\title{Uniform Harbourne-Huneke Bounds via Flat Extensions}
\author{Robert M. Walker}

\address{Department of Mathematics, University of Michigan, Ann Arbor, MI, 48109}
\ead{robmarsw@umich.edu}

\newtheorem{theorem*}{Theorem}
\newtheorem{lemma*}{Lemma}
\newtheorem{theorem}{Theorem}[section]
\newtheorem{exm}[theorem]{Example}
\newtheorem{prop}[theorem]{Proposition}
\newtheorem{corollary}[theorem]{Corollary}
\newtheorem{lemma}[theorem]{Lemma}
\newtheorem{remark}[theorem]{Remark}
\newproof{proof}{Proof}
\newtheorem{conjecture*}{Conjecture}
\newtheorem{conjecture}[theorem]{Conjecture}
\newtheorem{Claim*}{Claim}
\newtheorem{definition}[theorem]{Definition}
\newtheorem{definition*}{Definition}
\newtheorem{observation}[theorem]{Theorem}
\newdefinition{remark*}{Remark}
\newtheorem{tble}[theorem]{Table}

\parskip=10pt plus 2pt minus 2pt

\begin{abstract} Over an arbitrary field $\F$, Harbourne  \cite{Primer} conjectured that the symbolic power $I^{(N (r-1)+1)} \subseteq I^r$ for all $r>0$ and all homogeneous ideals $I$ in $S = \F [\P^N] = \F[x_0, \ldots, x_N]$. The conjecture has been disproven for select values of $N \ge 2$: first by Dumnicki, Szemberg, and Tutaj-Gasi\'{n}ska in characteristic zero \cite{DSTG01}, and then by Harbourne and Seceleanu in positive characteristic \cite{resurge2}. However, the ideal containments above do hold when, e.g., $I$ is a monomial ideal in $S$ \cite[Ex.~8.4.5]{Primer}. 

As a sequel to \cite{walker2016}, we present criteria for containments of type $I^{(N (r-1)+1)} \subseteq I^r$ for all $r>0$ and certain classes of ideals $I$ in a prodigious class of normal rings.  Of particular interest is a result for monomial primes in tensor products of affine semigroup rings. Indeed, we explain how to give effective multipliers $N$ in several cases including: the 
$D$-th Veronese subring of any polynomial ring $\F[x_1, \ldots, x_n]$ 
$(n \ge 1)$; and the extension ring $\F[x_1, \ldots, x_n, z]/(z^D - x_1 \cdots x_n)$ of $\F[x_1, \ldots, x_n]$.
 \end{abstract}
 
\begin{keyword} divisor class group, flat extensions,  symbolic powers, normal toric ring.

\MSC[2010] 13H10, 14C20, 14M25\end{keyword}
\maketitle


\section{Introduction and Conventions for the Paper}

Over an arbitrary field $\F$, let $S = \F [\mathbb{P}^N] = \F[x_0, x_1, \ldots, x_N]$ be the standard $\N$-graded polynomial algebra. The groundbreaking work of Ein-Lazarsfeld-Smith and Hochster-Huneke \cite{ELS, HH1}  implies that the symbolic power $I^{(Nr)} \subseteq I^r$ for all graded ideals 
$0 \subsetneqq I \subsetneqq S$ and all integers $r > 0$. Using graded ideals of \textit{star configurations}  in $\P^N$, Bocci and Harbourne \cite{BH1} showed that in securing these containments one cannot replace $N$ by some integer $0 < C < N$, even asymptotically.  In particular, $I^{(4)} \subseteq I^2$ holds for all graded ideals in $\F[\P^2]$, and Huneke asked whether an improvement $I^{(3)} \subseteq I^2$ holds for any radical ideal $I$ defining a finite set of points in $\P^2$. 
Building on this, Harbourne proposed dropping the symbolic power from $Nr$ down to the \textbf{Harbourne-Huneke bound} $Nr - (N-1) = N (r-1)+1$ when $N \ge 2$ \cite[Conj. 8.4.2]{Primer}: i.e.,  
\begin{equation}\label{Harbourne-Huneke bound 001}
I^{(N (r-1) +1)} \subseteq I^r \mbox{ for any graded ideal $0 \subsetneqq I \subsetneqq S$, all }r > 0, \mbox{ and all }N \ge 2.
\end{equation} 
There are several scenarios where these improved containments hold: for instance, they hold for all monomial ideals in $S$ over any field \cite[Ex.~8.4.5]{Primer}; see the recent ideal containment problem survey by Szemberg and Szpond \cite[Thm.~3.8]{SzemSzpo01}, as well as recent work of Grifo-Huneke \cite{GrifoHun00} in 2017. 

However, Dumnicki, Szemberg, and Tutaj-Gasi\'{n}ska showed in characteristic zero \cite{DSTG01} that the containment $I^{(3)} \subseteq I^2$ can fail for a radical ideal defining a point configuration in $\P^2$.  
Harbourne-Seceleanu showed in odd positive characteristic \cite{resurge2} that \eqref{Harbourne-Huneke bound 001} can fail for pairs $(N, r) \neq (2, 2)$ and ideals $I$ defining a point configuration in $\P^N$. 
 Akesseh \cite{Akes001} cooks up many new counterexamples to  \eqref{Harbourne-Huneke bound 001}  from these original constructions via finite, flat morphisms $\varphi^\# \colon \P^N \to \P^N$. 
No prime ideal counterexample is known. 

Lately, there has been better sustained success in showing that a containment in \eqref{Harbourne-Huneke bound 001} fails--and perhaps, more fervor. However, we want to revisit the fact that in arbitrary characteristic \eqref{Harbourne-Huneke bound 001} holds for all monomial ideals in $S$. In particular, our investigation of Harbourne-Huneke bounds \textit{improves upon} the fact that $P^{(N (r-1) +1)} \subseteq P^{(r)} = P^r \mbox{ for all }r > 0$ and for all monomial prime ideals $P$ in $S$ (i.e., monomial ideals generated by 
subsets of the variables $x_0, \ldots, x_N$). Indeed, $P^{(r)} = P^r$ for all $r$ whenever $P$ is a complete intersection ideal in $S$, and for all $r > 0$, $N (r-1) + 1 \ge (r-1) +1 = r$. 

The goal of this paper is to 
show that a variant of  \eqref{Harbourne-Huneke bound 001}  
holds for several familiar classes of ideals (e.g., 
combinatorial ideals such as monomial primes) in 
certain non-regular rings--even though it already fails for a large class of ideals defining point configurations in $\P^N$, and hence can fail for arbitrary graded ideals in 
$\F[\P^N]$. More precisely, 
we work in the setting of rational surface singularities and higher-dimensional normal toric rings.
First, we demonstrate how one can strengthen Lemmas 1.1 and 2.6 of our IJM paper \cite{walker2016} to a version involving 
a Harbourne-Huneke bound:

\begin{lemma}\label{thm: du Val bound 1}
\textit{Let $R$ be a Noetherian normal domain of positive Krull dimension whose global divisor class group $\operatorname{Cl}(R) : = \operatorname{Cl}(\operatorname{Spec}(R))$ is annihilated by an integer $D>0$. Then 
$$\mathfrak{q}^{(D(r-1) + s)} = (\mathfrak{q}^{(D)})^{r-1} \mathfrak{q}^{(s)} \mbox{, and } \quad \mathfrak{q}^{(D(r-1) + 1)} \subseteq \mathfrak{q}^r$$ for all ideals $\mathfrak{q} \subseteq R$ of pure height one, all $r>0$, and all $0 \le s < D$.} 
\end{lemma}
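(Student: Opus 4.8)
The plan is to translate the statement into divisor-theoretic bookkeeping on the Krull domain $R$. First I would recall the standard dictionary: since $R$ is Noetherian and normal, a pure height-one ideal $\mathfrak{q}$ with minimal primes $\mathfrak{p}_1,\dots,\mathfrak{p}_t$ and orders $a_i := v_{\mathfrak{p}_i}(\mathfrak{q})$ is the divisorial (reflexive) ideal $\bigcap_i \mathfrak{p}_i^{(a_i)}$ attached to the effective Weil divisor $E := \sum_i a_i V(\mathfrak{p}_i)$, and for every $n \ge 0$ one has $\mathfrak{q}^{(n)} = \bigcap_i \mathfrak{p}_i^{(na_i)}$, the divisorial ideal attached to $nE$; in particular its class in $\operatorname{Cl}(R)$ is $n[\mathfrak{q}] = n\sum_i a_i[\mathfrak{p}_i]$. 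I will lean on two elementary facts throughout: (i) a divisorial ideal is recovered from the orders $v_{\mathfrak{p}}$ it acquires at the height-one primes $\mathfrak{p}$ (via $R=\bigcap_{\operatorname{ht}\mathfrak{p}=1}R_{\mathfrak{p}}$), so two divisorial ideals of $R$ with the same associated divisor coincide; and (ii) multiplication by a nonzero $g \in R$ sends divisorial ideals to divisorial ideals and shifts the associated divisor by the principal divisor $\operatorname{div}(g)$.

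Next I would invoke the hypothesis $D \cdot \operatorname{Cl}(R) = 0$: then $D[\mathfrak{q}] = 0$, so $\mathfrak{q}^{(D)}$ is a principal ideal, say $\mathfrak{q}^{(D)} = fR$ with $0 \ne f \in R$ and $\operatorname{div}(f) = DE$. Fix $r > 0$ and $0 \le s < D$. Since $\mathfrak{q}^{(D)}$ is principal, its ordinary power is $(\mathfrak{q}^{(D)})^{r-1} = f^{r-1}R$, so
$$(\mathfrak{q}^{(D)})^{r-1}\,\mathfrak{q}^{(s)} \;=\; f^{r-1}\mathfrak{q}^{(s)},$$
which by (ii) is the divisorial ideal attached to $\operatorname{div}(f^{r-1}) + sE = (r-1)DE + sE = \bigl(D(r-1)+s\bigr)E$, i.e. exactly the divisor attached to $\mathfrak{q}^{(D(r-1)+s)}$. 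By (i) these two ideals of $R$ are equal, which is the first assertion.

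For the Harbourne--Huneke containment I would specialize to $s = 1$. As $\mathfrak{q}$ is pure height one, $\mathfrak{q}^{(1)} = \mathfrak{q}$, and since symbolic powers decrease and $D\ge 1$, $\mathfrak{q}^{(D)} \subseteq \mathfrak{q}^{(1)} = \mathfrak{q}$; hence $(\mathfrak{q}^{(D)})^{r-1} \subseteq \mathfrak{q}^{r-1}$ and therefore
$$\mathfrak{q}^{(D(r-1)+1)} \;=\; (\mathfrak{q}^{(D)})^{r-1}\,\mathfrak{q}^{(1)} \;=\; (\mathfrak{q}^{(D)})^{r-1}\,\mathfrak{q} \;\subseteq\; \mathfrak{q}^{r-1}\mathfrak{q} \;=\; \mathfrak{q}^r.$$

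I do not expect a serious obstacle; the argument is essentially bookkeeping once the divisor-class dictionary is in place. The points demanding care are exactly (i)--(ii): one must check that each $\mathfrak{q}^{(n)}$ is genuinely reflexive --- it is a finite intersection of the reflexive ideals $\mathfrak{p}_i^{(na_i)} = (\mathfrak{p}_i^{na_i})^{**}$ --- and that $f^{r-1}\mathfrak{q}^{(s)}$ is read as a literal product of submodules of $\operatorname{Frac}(R)$, with no reflexive hull needed because $(\mathfrak{q}^{(D)})^{r-1}=f^{r-1}R$ is already principal, so that one genuinely obtains an equality of ideals rather than merely an isomorphism or an inclusion. Finally, since $D \ge 1$ forces $D(r-1)+1 \le Dr$, the bound $\mathfrak{q}^{(D(r-1)+1)} \subseteq \mathfrak{q}^r$ refines the corresponding statements of \cite{walker2016}.
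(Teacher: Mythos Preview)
Your proposal is correct and follows essentially the same route as the paper: both arguments use that $\mathfrak{q}^{(D)}=fR$ is principal, observe that $f^{r-1}\mathfrak{q}^{(s)}$ is again a pure height-one (divisorial) ideal whose associated divisor is $(D(r-1)+s)E$, and conclude by matching divisors; the containment then follows by specializing to $s=1$. The only cosmetic difference is that the paper verifies your fact (ii) inline---using a short exact sequence to see that $\operatorname{Ass}_R(R/f^{r-1}\mathfrak{q}^{(s)})$ consists of height-one primes, and checking the divisor identity after localizing at each height-one prime---whereas you invoke it as a standing property of divisorial ideals.
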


\noindent 
When the domain $R$ in this lemma is two-dimensional,  $P^{(r)} = P^r$ when the ideal $P$ is zero or maximal, and so
 we infer that 
 $P^{(D(r-1) + 1)} \subseteq P^r$ for all prime ideals $P$ in $R$ and  all $r> 0$, and that $P^{(3)} \subseteq  P^2$ for all primes when $D = 2$ works. As discussed in \cite{walker2016}, the above lemma already applies to any two-dimensional, local 
 rational singularity (Lipman \cite{Lip0}) and the coordinate rings of simplicial toric varieties; see Theorem \ref{thm: exact sequence 00} below. The intro to \cite{walker2016} gives Lipman's definition of two-dimensional, normal local rational singularities; Section 3 therein gives remarks on class groups, both for these singularities and for toric varieties. 
 We prove a result for Veronese rings (Theorem \ref{thm: Veronese rings are optimal 1} below) from which one can infer that the ideal containment in the lemma can be tight by example. 

However, it is the result to follow that inspires the chosen title for this paper. It allows us to give first examples of 
 the Harbourne-Huneke bound for all 
 monomial primes in 
certain normal algebras of dimension three or higher, subalgebras of a Laurent polynomial ring that are generated by monomials. 
These domains are the coordinate rings of 
  normal affine toric varieties, called toric rings, monomial rings,  or affine semigroup rings. 
In this setting, we adduce a result (Proposition \ref{prop: faithful flatness criterion 001}) on ideal containment preservation along faithfully flat ring extensions, as part of deducing the following  

\begin{theorem}\label{thm: finite tensor products 000}
\textit{Let $R_1, \ldots, R_n$ be normal 
affine semigroup rings over a field $\F$. 
For each $1 \le i \le n$,  suppose there is an integer $D_i>0$ such that $P^{(D_i (r-1) + 1)} \subseteq P^r$ 
for all $r> 0$ and all monomial primes $P \subseteq R_i$. Set $D := \max\{D_1, \ldots, D_n \}$. Then $Q^{(D (r-1) + 1)} \subseteq Q^r$ for all $r>0$ and any monomial prime $Q$ in the normal 
affine semigroup ring 
 $R = R_1 \otimes_\F \cdots \otimes_\F R_n$. 
}  
\end{theorem}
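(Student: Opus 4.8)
The plan is to reduce the statement about the tensor product $R = R_1 \otimes_\F \cdots \otimes_\F R_n$ to the hypotheses on the individual factors $R_i$ by exhibiting a suitable faithfully flat extension and invoking Proposition~\ref{prop: faithful flatness criterion 001}. First I would recall that $R$ is again a normal affine semigroup ring, corresponding to the semigroup that is the direct sum of the semigroups attached to the $R_i$; concretely, if $R_i = \F[C_i]$ for a saturated affine semigroup $C_i \subseteq \Z^{d_i}$, then $R = \F[C_1 \oplus \cdots \oplus C_n] \subseteq \F[x_1^{\pm}, \ldots, x_m^{\pm}]$ with $m = \sum d_i$. A monomial prime $Q \subseteq R$ is then cut out by a face of the cone $\operatorname{Cone}(C_1 \oplus \cdots \oplus C_n) = \operatorname{Cone}(C_1) \times \cdots \times \operatorname{Cone}(C_n)$, and every face of a product cone is a product of faces, so $Q$ is ``built from'' monomial primes $P_i \subseteq R_i$ in a combinatorial way.

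The key step is the following flatness input. Set $D := \max_i D_i$. For each $i$, the hypothesis $P^{(D_i(r-1)+1)} \subseteq P^r$ together with Lemma~\ref{thm: du Val bound 1}-style reasoning (more precisely, the fact that in a normal domain pure-height-one symbolic powers multiply as $\mathfrak q^{(a)}\mathfrak q^{(b)} \subseteq \mathfrak q^{(a+b)}$, and that raising $D_i$ to $D \ge D_i$ only weakens the statement) gives $P^{(D(r-1)+1)} \subseteq P^r$ for all monomial primes $P \subseteq R_i$ and all $r>0$. Now I would argue that the symbolic powers of the monomial prime $Q \subseteq R$ are computed factorwise: because $Q$ localizes (at its generic point, or after inverting the monomials outside the relevant face) to a product situation, one gets $Q^{(t)} = \bigl(\text{product/ideal built from the }P_i^{(t)}\bigr)$, or at least the containment $Q^{(D(r-1)+1)} \subseteq Q^r$ can be checked after a faithfully flat base change that splits $R$ into a tensor product where the factor $R_i$ containing the ``essential'' part of $Q$ appears, with the other factors contributing flat (even smooth) directions. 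The cleanest route is: exhibit $R$ as faithfully flat over each $R_i$ (tensoring with the other factors over $\F$ is faithfully flat, since $\F$ is a field, and a tensor product of normal domains over a field, suitably geometrically irreducible, remains a domain), observe that $Q$ is the extension of some $P_i R$ up to a monomial prime that is itself a complete-intersection-type prime (generated by a subset of the ``variables'', for which symbolic and ordinary powers agree and the bound is automatic since $D(r-1)+1 \ge r$), and then combine via Proposition~\ref{prop: faithful flatness criterion 001}, which transports $P_i^{(D(r-1)+1)} \subseteq P_i^r$ up to $R$, with the associativity of symbolic powers under such faithfully flat, class-group-compatible extensions.

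Concretely the steps are: (1) verify $R$ is a normal affine semigroup ring and describe its monomial primes via faces of the product cone; (2) decompose a given monomial prime $Q$ as, up to a flat extension, a monomial prime extended from a single factor $R_i$ tensored against complete-intersection monomial primes in the remaining factors; (3) bump each $D_i$ up to $D$; (4) apply Proposition~\ref{prop: faithful flatness criterion 001} to push $P_i^{(D(r-1)+1)} \subseteq P_i^r$ from $R_i$ to $R$, checking that symbolic powers are preserved (this uses that the extension is faithfully flat with the going-down property and that height-one primes pull back correctly, so $(P_i R)^{(t)} = P_i^{(t)} R$); (5) handle the complete-intersection factors directly, where $P^{(t)} = P^t$ and $D(r-1)+1 \ge r$ finishes it; (6) reassemble.

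The main obstacle I anticipate is Step~(4)–(2): namely, showing that the symbolic power of the monomial prime $Q$ in the tensor product really does ``see'' only one factor after a faithfully flat localization, i.e., that $Q^{(t)}$ is the extension of $P_i^{(t)}$ along a faithfully flat map for which symbolic powers are preserved. Faithful flatness alone does not preserve symbolic powers in general; one needs the fibers to be regular (or at least the extension to induce an isomorphism on relevant local rings up to regular parameters) so that associated primes and the height-one behaviour are controlled. For monomial primes in semigroup rings this should follow from the explicit combinatorial description — localizing $R$ by inverting all monomials corresponding to lattice points outside the face defining $Q$ yields a ring of the form (Laurent polynomial ring) $\otimes_\F$ (the localization of the single factor $R_i$ at $P_i$), which is a regular, hence flat and symbolic-power-preserving, extension of $R_{i, P_i}$ — but making this rigorous, and checking it is exactly the hypothesis needed by Proposition~\ref{prop: faithful flatness criterion 001}, is where the real work lies.
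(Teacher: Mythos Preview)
Your proposal correctly identifies several ingredients the paper uses: the description of monomial primes in $R$ as $Q = \sum_i P_i R$ via faces of the product cone (the paper's Lemma~\ref{lem: monomial primes as sums of extended ideals 001}), the faithfully flat transport $(P_i R)^{(t)} = P_i^{(t)} R$ via Proposition~\ref{prop: faithful flatness criterion 001}, and the harmless replacement of each $D_i$ by $D = \max_i D_i$.

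The gap is Step~(2). A monomial prime $Q = \sum_{i=1}^n P_i R$ can have \emph{all} of the $P_i$ nonzero and non-complete-intersection simultaneously; there is no sense in which it reduces to a single factor after any localization. Your claimed structure---that inverting the monomials outside the face of $Q$ yields (Laurent ring)~$\otimes_\F (R_{i})_{P_i}$ for one distinguished $i$---is false in general: the localization at $Q$ still sees all $n$ faces at once, and the residue ring $R/Q$ involves every $R_i/P_i$. So the strategy of handling one ``essential'' $P_i$ and dismissing the rest as complete intersections cannot get off the ground; the obstacle you flag at the end is not a matter of rigor but a genuine obstruction.

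What the paper actually does is combine the factors via a multinomial-type inclusion. Writing $S(N) = \{(A_1,\ldots,A_n) \in (\Z_{\ge 0})^n : \sum_i A_i = N\}$, one proves directly, by a saturation argument realizing $Q^{(N)} = Q^N :_R \mathcal{M}^\infty$ for an explicit complement monomial $\mathcal{M}$ and tracking which tensor factor each monomial piece lives in, that
\[
Q^{(N)} \subseteq \sum_{(A_1,\ldots,A_n)\in S(N)} \prod_{i=1}^n (P_i R)^{(A_i)}.
\]
Once this holds, the faithfully flat transport gives $(P_i R)^{(A_i)} \subseteq (P_i R)^{\lceil A_i/D\rceil}$ for each $i$, and the elementary inequality $\sum_i \lceil A_i/D\rceil \ge \lceil N/D\rceil$ lands every summand inside $Q^{\lceil N/D\rceil}$. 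That symbolic-power splitting across all $n$ factors simultaneously is the idea missing from your outline.
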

\noindent To clarify, a normal affine semigroup $\F$-algebra $A$ has an $\F$-basis of Laurent monomials and an ideal in $A$ is \textbf{monomial} if it is generated by monomials. See Section \ref{sec:Toric-Alg-Prelims00} for more details. 

All normal toric rings of dimension at most two have finite cyclic divisor class group, and thus satisfy the hypotheses on the $R_i$ factors in the theorem; 
 aside from these cases, 
 the factors $R_i$ may be taken from the following classes of rings (including those of Krull dimension three or higher): 
\begin{theorem}\label{thm: Veronese hypersurf}
\textit{Let $S = \F [x_1, \ldots, x_n]$ $(n \ge 1)$ be a polynomial ring over an arbitrary field $\F$ and consider the module-finite extensions of normal toric rings   
$V_D \subseteq S \subseteq H_D$, where 
\begin{enumerate}
\item $V_D \subseteq S$ is the $D$-th Veronese subring with its standard $\N$-grading, and   
\item $H_D = \F[z, x_1, \ldots, x_n]/(z^D - x_1 \cdots x_n)$ is a  hypersurface ring. 
\end{enumerate} 
Then 
 $P^{(D(r-1) + 1)} \subseteq P^r$ for all $r> 0$, where $P$ is a monomial ideal in  any of the three rings. 
} 
\end{theorem}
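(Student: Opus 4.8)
The plan is to handle the three rings separately, in each case reducing to a statement about a single monomial prime and then to combinatorics in the defining semigroup.

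First I would record the structure. All three rings are normal affine semigroup rings: $S=\F[\N^n]$; $V_D=\F[\{a\in\N^n:|a|\equiv 0\pmod{D}\}]$ is the affine cone over the $D$-uple Veronese of $\P^{n-1}$; and $H_D\cong\F[\N^n+\N\cdot\tfrac1D(1,\dots,1)]$, once one checks that $z^D-x_1\cdots x_n$ cuts out the full relation ideal, equivalently that this semigroup is saturated. All three cones are simplicial, so the class groups are finite, and the presentation $0\to M\to\Z^{\Sigma(1)}\to\operatorname{Cl}\to 0$ gives $\operatorname{Cl}(S)=0$, $\operatorname{Cl}(V_D)\cong\Z/D\Z$, and $\operatorname{Cl}(H_D)\cong(\Z/D\Z)^{n-1}$; each is annihilated by $D$. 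The torus-invariant primes of each ring are the $P_T$, $T\subseteq\{1,\dots,n\}$, with $\operatorname{ht}P_T=|T|$; here $P_\emptyset=0$ and $P_{\{1,\dots,n\}}$ is the maximal monomial ideal. Since $\mathfrak q^{(m)}=\mathfrak q^{m}$ when $\mathfrak q$ is zero or maximal, and since Lemma~\ref{thm: du Val bound 1} (with $D\operatorname{Cl}=0$) disposes of the height-one primes, it remains only to treat the $P_T$ with $2\le|T|\le n-1$ — in particular there is nothing left to do when $n\le 2$, and for $S$ these $P_T$ are complete intersections, so $P_T^{(m)}=P_T^m\subseteq P_T^{r}$ as soon as $m\ge r$, which $D(r-1)+1$ is.

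For $V_D$ and $H_D$ with $2\le|T|\le n-1$ the argument has two steps: an explicit monomial description of $P_T^{(m)}$, and a pigeonhole step. For $V_D$ the punctured spectrum is regular (the cone over the smooth, projectively normal Veronese variety), so $(V_D)_{P_T}$ is a regular local ring in which $P_T$ becomes the maximal ideal, generated by the residues of the $x_i$ $(i\in T)$ up to units; reading this off gives
\[
P_T^{(m)}=\big\langle x^a:\ a\in\N^n,\ |a|\equiv 0\pmod{D},\ \textstyle\sum_{i\in T}a_i\ge m\big\rangle .
\]
For $H_D$ the localizations at the intermediate-height primes are singular, so instead I would localize off $T$: there is an isomorphism $H_D[\,x_j^{-1}:j\notin T\,]\cong \F[\,y_j^{\pm1}:j\notin T\,]\otimes_\F H_D^{[|T|]}$ with $H_D^{[h]}:=\F[z,x_1,\dots,x_h]/(z^{D}-x_1\cdots x_h)$, under which $P_T$ extends to $\F[y^{\pm}]\otimes\mathfrak m_{H_D^{[|T|]}}$. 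Since powers of a maximal monomial ideal are primary, contracting back yields $P_T^{(m)}=\big\langle \chi^{u}:\ \operatorname{ord}_{\mathfrak m_{H_D^{[|T|]}}}(u_T)\ge m\big\rangle$, where $u_T$ denotes the $T$-part of $u$ and the order function of $\mathfrak m_{H_D^{[h]}}$ is read from the Newton polyhedron $\operatorname{conv}(e_1,\dots,e_h,\tfrac1D\mathbf 1)+\R^h_{\ge0}$: it equals $w\mapsto\sum_i w_i+(D-h)\min_i w_i$ when $h\le D$, and is controlled by the residue of $w$ modulo $\Z^h$ when $h>D$.

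With these formulas, both endgames are the same pigeonhole: $\chi^{u}\in P_T^{r}$ exactly when the exponent vector of $\chi^{u}$ can be split into $r$ parcels each supporting one of the minimal monomial generators of $P_T$ — the $x_i$ $(i\in T)$, together with $z$ in the $H_D$ case. From $\sum_{i\in T}a_i\ge D(r-1)+1$ and $|a|\equiv 0\pmod{D}$ one gets $|a|\ge Dr$ and then partitions the $Dr$ units of $x^{a}$ into $r$ blocks of size $D$, each meeting $T$; that settles $V_D$. For $H_D$ one first uses $z^{D}=x_1\cdots x_n$ to trade surplus ordinary monomials for powers of $z$, reducing to a monomial of minimal $z$-content, and then checks the corresponding inequality; the decisive point is that for $|T|\ge 2$ and $r\ge 2$ the quantity $(D-1)(r-1)$ beats the correction term contributed by the smallest coordinate (one reduces to $rD(|T|-1)\ge |T|(D-1)$), which is precisely where the sharpened bound $D(r-1)+1$, as opposed to $D(r-1)$, is spent. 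I expect the main obstacle to be the explicit description of $P_T^{(m)}$ in the $H_D$ case — matching the localization computation with the combinatorics and, in particular, pinning down the order function of the maximal ideal of the singular ring $H_D^{[h]}$ in the regime $h>D$ — after which the pigeonhole steps are routine.
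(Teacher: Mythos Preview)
Your strategy---handle $S$, $V_D$, $H_D$ separately, dispose of height~$0$, height~$n$, and height~$1$ by trivial reasons and Lemma~\ref{thm: du Val bound 1}, then attack the intermediate monomial primes by an explicit monomial description of $P_T^{(m)}$ followed by a pigeonhole---is correct and lands on the same combinatorial inequality the paper uses. The paper, however, does not separate out height one via the class group; it runs a uniform saturation computation $P_T^{(E)}=P_T^E:_{R}(\text{complement monomial})^\infty$ for every nonmaximal monomial prime in each ring, reads off a single inequality (your order-function formula $\sum_{i\in T}a_i+c+(D-|T|)\min_{i\in T}a_i\ge E$ in the $H_D$ case is exactly the paper's inequality~(5.1)), and then does a direct case analysis. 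Your localization $H_D[x_j^{-1}:j\notin T]\cong \F[y^{\pm}]\otimes_\F H_D^{[|T|]}$ is correct after the coordinate change $x_1\mapsto x_1\prod_{j\notin T}x_j$ and gives a pleasant conceptual reduction the paper does not make; conversely the paper's bare-hands saturation avoids having to identify the $\mathfrak m$-adic order function at all.

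Two points where your write-up needs tightening. First, you have the difficulty backwards in the $H_D$ case: the regime $h=|T|>D$ that you flag as the ``main obstacle'' is in fact trivial---once $D\le h$ the coefficient $(D-h)$ in the inequality is nonpositive, so $\sum_{i\in T}a_i+c\ge E$ already, i.e.\ $P_T^{(E)}=P_T^E$ outright. The genuine work is the case $h\le D$, where your order formula is correct and the paper closes with a short contradiction argument showing $G:=c+\sum_{i\in T}a_i\ge h\cdot\frac{E-1}{D}+1\ge r$; your ``pigeonhole'' sketch and the inequality $rD(h-1)\ge h(D-1)$ are gesturing at this but are not yet a proof. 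Second, your justification of the $V_D$ symbolic-power formula via ``$P_T$ generated by the residues of the $x_i$'' is imprecise since $x_i\notin V_D$; the formula $P_T^{(m)}=\langle x^a: D\mid|a|,\ \sum_{i\in T}a_i\ge m\rangle$ is nonetheless correct, and is exactly what the paper extracts from its saturation computation.
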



\noindent \textbf{Conventions:} All our rings are Noetherian and commutative with identity. 
From Section \ref{section: Main Event} onwards, our rings will be \textit{affine} $\F$-algebras, that is, of finite type over a fixed field $\F$ of arbitrary characteristic. 
By \textit{algebraic variety}, we will mean an integral scheme of finite type over the field $\F$.  

\noindent \textbf{Acknowledgements:} This work forms part of my Ph.D. thesis. I thank my thesis adviser, Karen E. Smith, for encouraging me to write this manuscript, and for several fruitful discussions along the way. I also thank Daniel Hern\'{a}ndez, 
 Jack Jeffries, Luis N\'{u}\~{n}ez-Betancourt, and Felipe P\'{e}rez  for each critiquing a draft of the paper. I also thank an anonymous referee for comments that improved exposition. 
This work was supported by a NSF GRF under Grant Number PGF-031543, the NSF RTG grant 0943832, and a 2017 Ford Foundation Dissertation Fellowship.

\section{Symbolic Powers, Faithful Flatness, and the Proof of Lemma 1.1}\label{section: Mars Walker Lemma}

\subsection{Symbolic Powers and Faithful Flatness} 
 If $I$ is any proper ideal in a nonzero Noetherian ring $R$, and $\operatorname{Ass}_R (R/I)$ is the set of associated primes of $I$, we define its \textbf{$a$-th ($a \in \Z_{>0}$) symbolic power} ideal  $I^{(a)}$ by the rule: 
$$I^{(a)} := I^a W^{-1} R \cap R, \mbox{ where }W = R - \bigcup \{P \colon P \in \operatorname{Ass}_R (R/I)\}.$$ 
Equivalently, $I^{(a)} = \{f \in R \colon sf \in I^a \mbox{ for some }s \in W\}.$ 
While $I^a \subseteq I^{(a)}$ for all $a$, the converse can fail for $a>1$: $I^{(1)} = I$ since $W$ is the set of nonzerodivisors modulo $I$.   

Consider a flat map $\phi \colon A \to B$ of Noetherian rings. In what follows, the ideal $JB := \langle \phi(J) \rangle B$ for any ideal $J$ in $A$, and $J^r B = (JB)^r$ for all $r \ge 0$ since the two ideals share a generating set. 
 For any $A$-module $E$, the proof of Theorem 23.2 (ii) in Matsumura \cite{Matsumura} shows that 
\begin{equation}\label{eqn: associated primes and flatness 001}  
\operatorname{Ass}_B (E \otimes_A B) = \bigcup_{P \in \operatorname{Ass}_A (E)} \operatorname{Ass}_B (B/P B).
\end{equation}
We define a set 
$\mathcal{I}(A) = \{\mbox{proper ideals }I \subseteq A  \colon 
\operatorname{Ass}_{B} (B / IB) = \{ PB  \colon P \in \operatorname{Ass}_{A} (A / I) \} \}.$
Setting $E = A/I$ in \eqref{eqn: associated primes and flatness 001}, we observe that $I \in \mathcal{I}(A)$ if and only if the extended ideal $PB$ is prime for all $P\in \operatorname{Ass}_{A} (A/ I)$. Our paper \cite{Walker003} records a  simple example to illustrate that in an arbitrary faithfully flat ring extension, $\mathcal{I}(A)$ need not contain all prime ideals in $A$, let alone all proper ideals.   

\begin{prop}\label{prop: faithful flatness criterion 001}
Suppose $\phi \colon A \to B$ is a faithfully flat map of Noetherian rings. Then for each $I \in \mathcal{I}(A)$ and all integer pairs $(N, r) \in (\Z_{\ge 0})^2$, we have 
\begin{equation}\label{eqn: faithful flatness criterion 001}
I^{(N)} B = (IB)^{(N)},
\end{equation} 
and 
$I^{(N)} \subseteq I^r$ if and only if $(IB)^{(N)} =  I^{(N)}B \subseteq I^r B = (IB)^r. $ 
\end{prop}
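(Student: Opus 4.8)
The plan is to prove the identity $I^{(N)}B = (IB)^{(N)}$ first, since the "if and only if" assertion follows formally from it once one knows that $I^r B = (IB)^r$ (already noted in the text, as the two ideals share a generating set) together with faithful flatness. For the main identity, I would work directly from the localization description of symbolic powers: $I^{(N)} = I^N W^{-1}A \cap A$ where $W = A \smallsetminus \bigcup\{P : P \in \operatorname{Ass}_A(A/I)\}$, and likewise $(IB)^{(N)} = (IB)^N V^{-1}B \cap B$ where $V = B \smallsetminus \bigcup\{Q : Q \in \operatorname{Ass}_B(B/IB)\}$. The hypothesis $I \in \mathcal{I}(A)$ says precisely that $\operatorname{Ass}_B(B/IB) = \{PB : P \in \operatorname{Ass}_A(A/I)\}$, and each $PB$ is prime. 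So the two relevant multiplicative sets are related by $V = B \smallsetminus \bigcup_{P \in \operatorname{Ass}_A(A/I)} PB$.

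The key technical step is to show that passing to $B$ commutes with the intersection-with-$A$ operation defining the symbolic power, i.e. that $\bigl(I^N W^{-1}A \cap A\bigr)B = (I^N B)\,V^{-1}B \cap B$. One inclusion is easy: $I^{(N)} \subseteq I^N W^{-1}A \cap A$ gives $I^{(N)}B \subseteq (I^N B)V^{-1}B$, and $I^{(N)}B \subseteq B$, so $I^{(N)}B \subseteq (IB)^{(N)}$ — here I only need that the image in $B$ of an element $s \in W$ lands in $V$, which holds because $\operatorname{Ass}_B(B/IB)$ consists of the extended primes $PB$ and $s \notin P \Rightarrow \phi(s) \notin PB$ (the latter by faithful flatness, since $\phi(s) \in PB$ would force $s \in PB \cap A = P$, using that $PB$ is prime and $A \to B/PB$ is faithfully flat hence injective with $PB \cap A = P$). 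For the reverse inclusion, I would use the standard fact that for a flat extension and a finitely generated ideal, extension commutes with forming the preimage-type intersection; concretely, since $B$ is flat over $A$, for the $A$-submodule $I^{(N)} \subseteq I^N W^{-1}A$ one has $I^{(N)} = I^N W^{-1}A \cap A$, and tensoring the exact sequence $0 \to I^{(N)} \to A \to A/I^{(N)} \to 0$ with $B$ identifies $I^{(N)}B$ with the kernel of $B \to (A/I^{(N)})\otimes_A B = B/I^{(N)}B$; then one checks $B/I^{(N)}B$ has associated primes among the $PB$ and embeds $A/I^{(N)}$-compatibly into the localization, yielding $(IB)^{(N)} \subseteq I^{(N)}B$. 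The cleanest route is: localize \eqref{eqn: associated primes and flatness 001} with $E = A/I^{(N)}$ and use that $\operatorname{Ass}_A(A/I^{(N)}) \subseteq \operatorname{Ass}_A(A/I)$ (a standard property of symbolic powers), so the nonzerodivisors on $A/I^{(N)}$ map into $V$ up to the primes we care about, making the two localizations agree after intersecting back.

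Finally, for the equivalence: if $I^{(N)} \subseteq I^r$ then extending gives $I^{(N)}B \subseteq I^r B$, and by the main identity and $I^r B = (IB)^r$ this reads $(IB)^{(N)} \subseteq (IB)^r$. Conversely, if $(IB)^{(N)} \subseteq (IB)^r$, then $I^{(N)}B \subseteq I^r B$; since $\phi$ is faithfully flat, contraction is injective on the lattice of ideals in the sense that $JB \subseteq J'B$ with $J, J'$ ideals of $A$ implies $J = JB \cap A \subseteq J'B \cap A = J'$ — apply this with $J = I^{(N)}$, $J' = I^r$ to conclude $I^{(N)} \subseteq I^r$.

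I expect the main obstacle to be the reverse inclusion $(IB)^{(N)} \subseteq I^{(N)}B$ in the identity \eqref{eqn: faithful flatness criterion 001}: the subtlety is that $V^{-1}B$ is genuinely larger than $(W^{-1}A)\otimes_A B$ in general, so one cannot simply "tensor the localization." The fix is to observe that because $\operatorname{Ass}_B(B/I^NB)$ (equivalently $\operatorname{Ass}_B(B/(IB)^N)$) is contained in the set of primes of $B$ lying over the members of $\operatorname{Ass}_A(A/I^N) \subseteq \operatorname{Ass}_A(A/I)$ — again via \eqref{eqn: associated primes and flatness 001} — an element of $B$ killed modulo $(IB)^N$ by some $v \in V$ is already killed by (the image of) some $s \in W$, reducing the computation of $(IB)^{(N)}$ to the extension-friendly localization $W^{-1}A \otimes_A B$, after which flatness gives $(I^N W^{-1}A \cap A)\otimes_A B = I^N W^{-1}B \cap B$ directly.
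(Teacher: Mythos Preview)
Your argument for the inclusion $I^{(N)}B \subseteq (IB)^{(N)}$ and for the final equivalence is correct and matches the paper's. For the reverse inclusion, your ``cleanest route'' is also correct and is in fact the substance of what the paper does: the paper localizes both $I^{(N)}B$ and $(IB)^{(N)}$ at $V = B \setminus \bigcup_P PB$, observes that each becomes $I^N B_V$, and then ``contracts back to $B$.'' For $(IB)^{(N)}$ this contraction is tautological, but for $I^{(N)}B$ one needs precisely that every element of $V$ is a nonzerodivisor on $B/I^{(N)}B$; this follows from \eqref{eqn: associated primes and flatness 001} applied to $E = A/I^{(N)}$ together with $\operatorname{Ass}_A(A/I^{(N)}) \subseteq \operatorname{Ass}_A(A/I)$ and $I \in \mathcal{I}(A)$ --- exactly the step you identified.

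However, your final paragraph contains a genuine error. You assert $\operatorname{Ass}_A(A/I^N) \subseteq \operatorname{Ass}_A(A/I)$, which is false in general: ordinary powers $I^N$ typically acquire embedded primes not associated to $I$ (this is precisely why symbolic powers differ from ordinary powers). Consequently the claim that ``an element of $B$ killed modulo $(IB)^N$ by some $v \in V$ is already killed by the image of some $s \in W$'' is not justified by the reasoning you give; while the conclusion happens to be true (it is equivalent to the proposition itself once one has $I^{(N)}B = I^N \phi(W)^{-1}B \cap B$ by flatness), the route through $\operatorname{Ass}_A(A/I^N)$ does not establish it. Drop this alternative and commit to your ``cleanest route'' via $E = A/I^{(N)}$, which is the correct module to feed into \eqref{eqn: associated primes and flatness 001}.
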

\begin{proof}
First, $I^{(N)} B \subseteq (IB)^{(N)}$: indeed, if $f \in I^{(N)}$, then $s f \in I^N$ for some $s \in A$ such that 
$$s \not\in  \bigcup_{P \in \operatorname{Ass}_{A} (A/I)} P \stackrel{(\star)}{=} \bigcup_{P \in \operatorname{Ass}_{A} (A/I)} (PB \cap A) = \left(\bigcup_{P \in \operatorname{Ass}_{A} (A/I)} PB \right) \cap A  $$ where ($\star$) holds by faithful flatness; it follows that $s \not\in \bigcup_{P \in \operatorname{Ass}_{A} (A/I)} PB = \bigcup_{Q \in \operatorname{Ass}_{B} (B/IB) } Q$, where equality holds since $I \in \mathcal{I}(A)$ by hypothesis. We thus conclude that $f \in (IB)^{(N)}$. 

By definition, $(IB)^{(N)} B_W = (I B)^N B_W = I^N B_W$ since all three ideals contract to $(IB)^{(N)}$, where $B_W = W^{-1} B$ denotes the ring obtained via localization of $B$ at  the multiplicative system 
$$W = B- \left(\bigcup_{Q \in \operatorname{Ass}_{B} (B/IB) } Q\right) = B- \left(\bigcup_{P \in \operatorname{Ass}_{A} (A/I)} PB \right).$$ 
 Notice that since $I^{(N)} B \subseteq (IB)^{(N)}$, the right-hand containment holds in
\begin{align*}
I^N B_W \subseteq I^{(N)} B_W = (I^{(N)}B) B_W \subseteq  (IB)^{(N)} B_W =  I^N B_W.
\end{align*}
Thus $I^{(N)} B$ and $(IB)^{(N)}$ localize to the same ideal $I^N B_W$; contracting back to $B$, we conclude that \eqref{eqn: faithful flatness criterion 001} holds for all $N \ge 0$. Finally, \eqref{eqn: faithful flatness criterion 001} gives both implications of the second part of the proposition, adducing faithful flatness once more to contract an ideal  containment to $A$.  \qed
\end{proof}
We adapt Proposition \ref{prop: faithful flatness criterion 001} later on (cf., Proposition \ref{prop: faithful flatness criterion 002}) to prove Theorem \ref{thm: finite tensor products 001}, from which Theorem \ref{thm: finite tensor products 000} follows as an immediate corollary.

\subsection{Preliminaries on Divisor Class Groups} 

Our main references are Fossum \cite{fossum2012divisor}, Hartshorne \cite[II.6]{Hartsh0}, Hochster \cite{hoch2}, and Matsumura \cite[Ch.~11]{Matsumura}. However, we opt to state mathematical definitions and results from these sources only for Noetherian normal domains, rather than for Krull domains in general as is done in \cite{fossum2012divisor}. 


Throughout, $R$ will denote a Noetherian normal domain. Let $\mathcal{P}$ denote the set of height-one primes in $R$. 
As noted in Matsumura's chapter on Krull rings \cite[Corollary of Thm.~12.3]{Matsumura},  when $f \in R$ is a nonzero nonunit, and $\nu_P$ is the discrete valuation on the DVR $R_P$ (for $P \in \mathcal{P}$), we have a unique primary decomposition
$$(f)R = \bigcap_{P \in \mathcal{P}} P^{(N_P)}, \mbox{ where }N_P := \nu_P (f) = 0 \mbox{ for all but finitely many }P.$$
We define the \textbf{Weil divisor of $f$} to be $\mbox{div}(f) : = \sum_{P \in \mathcal{P} } N_P \cdot P$. Additionally, we define the \textit{trivial} effective Weil divisor $\operatorname{div}(\langle 1 \rangle R) = \operatorname{div}(R) =  [R] := 0$ of the unit ideal to have identically zero $\Z$-coefficients. 

\begin{definition} The \textbf{divisor class group} of a Noetherian normal domain $R$,  $$\operatorname{Cl}(R) = \operatorname{Cl}(\operatorname{Spec}(R)),$$ is the free abelian group on the set $\mathcal{P}$ of height one prime ideals of $R$ modulo relations $$a_1 P_1 + \ldots + a_r P_r = 0,$$ whenever the ideal $P_1^{(a_1)} \cap \ldots \cap 
P_r^{(a_r)}$ is principal. 
\end{definition}

In particular, $\operatorname{Cl}(R)$ is trivial if and only if $R$ is a UFD \cite[II.6]{Hartsh0}. Both conditions mean that every height one prime ideal in $R$ is principal. We note that $P^{(a)} = P^a$ for all $a>0$ and all height one primes $P$ in a Noetherian UFD.  

We now record three theorems without formal proof, consolidating some results from Ch.II, Sections 7, 8, and 10 of Fossum \cite{fossum2012divisor}. The first result consolidates some immediate consequences of a fact called Nagata's theorem \cite[Thm.~7.1]{fossum2012divisor}, on the behavior of class groups under localization.

\begin{theorem}[cf., Fossum {\cite[Cor.~7.2, Cor.~7.3]{fossum2012divisor}}]\label{thm:classgroupsunderlocalization}
Let $S$ be a multiplicatively closed subset of a Noetherian normal domain $A$. Then: 
\begin{enumerate}
\item The natural map $\operatorname{Cl}(A) \to \operatorname{Cl}(S^{-1} A)$ is a surjection of abelian groups. The kernel is generated by the classes of the height one prime ideals which meet $S$. 
\item If $S$ is generated by prime elements of $A$, then $\operatorname{Cl}(A) \to \operatorname{Cl}(S^{-1} A)$ is an isomorphism of abelian groups. 
\end{enumerate}
\end{theorem}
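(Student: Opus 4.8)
The plan is to deduce both parts from the divisor-theoretic description of $\operatorname{Cl}$ recalled just above, following Fossum's route through Nagata's theorem. Write $\operatorname{Div}(A)$ for the free abelian group on the set $\mathcal{P}$ of height one primes of $A$, and $\operatorname{PDiv}(A) = \{\operatorname{div}(f) : f \in \operatorname{Frac}(A)^{\times}\}$, so that $\operatorname{Cl}(A) = \operatorname{Div}(A)/\operatorname{PDiv}(A)$, and similarly for $S^{-1}A$ (which is again a Noetherian normal domain with $\operatorname{Frac}(S^{-1}A) = \operatorname{Frac}(A)$). First I would record the prime correspondence for localization: $P \mapsto S^{-1}P$ is an inclusion-preserving bijection from primes of $A$ disjoint from $S$ onto primes of $S^{-1}A$, and it carries height one primes to height one primes (with inverse given by contraction, which preserves height in this correspondence). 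Let $\operatorname{Div}_S(A) \subseteq \operatorname{Div}(A)$ be the free subgroup on the height one primes that meet $S$. Define $\rho \colon \operatorname{Div}(A) \to \operatorname{Div}(S^{-1}A)$ by $\rho(P) = S^{-1}P$ when $P \cap S = \emptyset$ and $\rho(P) = 0$ when $P \cap S \neq \emptyset$; the correspondence shows $\rho$ is surjective with $\ker \rho = \operatorname{Div}_S(A)$. The one local computation needed is that for $P \in \mathcal{P}$ with $P \cap S = \emptyset$ the natural map $A_P \to (S^{-1}A)_{S^{-1}P}$ is an isomorphism of discrete valuation rings, so $\nu_P$ and $\nu_{S^{-1}P}$ agree on $\operatorname{Frac}(A)^{\times}$; combined with the finiteness statement ($\nu_P(f) = 0$ for all but finitely many $P$) quoted before the theorem, this gives $\rho(\operatorname{div}_A(f)) = \operatorname{div}_{S^{-1}A}(f)$ for every $f$, hence $\rho(\operatorname{PDiv}(A)) = \operatorname{PDiv}(S^{-1}A)$. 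Therefore $\rho$ descends to a surjection $\overline{\rho} \colon \operatorname{Cl}(A) \twoheadrightarrow \operatorname{Cl}(S^{-1}A)$.

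For the kernel, suppose $\overline{\rho}([D]) = 0$ for some $D \in \operatorname{Div}(A)$. Then $\rho(D) = \operatorname{div}_{S^{-1}A}(f) = \rho(\operatorname{div}_A(f))$ for some $f \in \operatorname{Frac}(A)^{\times}$, so $D - \operatorname{div}_A(f) \in \ker \rho = \operatorname{Div}_S(A)$, and hence $[D] = [D - \operatorname{div}_A(f)]$ lies in the image of $\operatorname{Div}_S(A)$ in $\operatorname{Cl}(A)$, that is, in the subgroup generated by the classes $[P]$ of height one primes $P$ meeting $S$. The reverse inclusion is immediate, since such $P$ satisfy $\rho(P) = 0$. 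This proves part (1).

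For part (2), say $S$ is generated as a multiplicative set by a family $\Sigma$ of prime elements of $A$ (the general case, allowing also units and saturation, is identical and changes neither $S^{-1}A$ nor the conclusion). Every element of $S$ is then a finite product of members of $\Sigma$. If $P$ is a height one prime meeting $S$, then $P$ contains such a product, which is nonempty because $P$ is proper; primeness gives $p \in P$ for some $p \in \Sigma$. Since $(p)$ is a nonzero prime ideal contained in the height one prime $P$, we get $P = (p)$, which is principal, so $[P] = 0$ in $\operatorname{Cl}(A)$. By part (1) the kernel of $\overline{\rho}$ is therefore trivial, and being also surjective, $\overline{\rho}$ is an isomorphism.

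The substance of the argument is the exact sequence $0 \to \operatorname{Div}_S(A) \to \operatorname{Div}(A) \xrightarrow{\rho} \operatorname{Div}(S^{-1}A) \to 0$ together with the identity $\rho \circ \operatorname{div}_A = \operatorname{div}_{S^{-1}A}$ --- this is exactly the content of Nagata's theorem, after which both parts are formal. Its proof reduces to two facts about a Noetherian normal (equivalently, Noetherian Krull) domain $A$ and a localization $S^{-1}A$: that the bijection between height one primes of $S^{-1}A$ and height one primes of $A$ disjoint from $S$ holds, and that $A_P$ is unchanged under any further localization avoiding $P$. These are where normality genuinely enters, via the primary decomposition $(f)A = \bigcap_{P} P^{(N_P)}$ recalled above. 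I expect the care needed lies in checking exactness of $\rho$ at $\operatorname{Div}(A)$ --- that its kernel is no larger than $\operatorname{Div}_S(A)$, equivalently that a principal divisor of $S^{-1}A$ already comes from a principal divisor of $A$ modulo $\operatorname{Div}_S(A)$ --- rather than in any of the more formal steps.
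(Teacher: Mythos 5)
The paper records this theorem without proof, citing Fossum's Corollaries 7.2--7.3 (consequences of Nagata's theorem), so there is no in-paper argument to compare against; your write-up is a correct reconstruction of exactly that standard route. The exact sequence $0 \to \operatorname{Div}_S(A) \to \operatorname{Div}(A) \xrightarrow{\rho} \operatorname{Div}(S^{-1}A) \to 0$, the compatibility $\rho \circ \operatorname{div}_A = \operatorname{div}_{S^{-1}A}$ via the local isomorphisms $A_P \cong (S^{-1}A)_{S^{-1}P}$, and the observation for part (2) that a height one prime meeting $S$ must contain, hence equal, a principal prime $(p)$ are all correct and are precisely the content of the cited source. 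One minor quibble with your closing remark: the exactness of $\rho$ at $\operatorname{Div}(A)$ is immediate from its definition on basis elements; the step that actually uses the normality input is the identity $\rho(\operatorname{div}_A(f)) = \operatorname{div}_{S^{-1}A}(f)$ (equivalently $\operatorname{PDiv}(S^{-1}A) = \rho(\operatorname{PDiv}(A))$), which you do establish correctly in the body of the argument.
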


The next two results will be especially useful in  Section \ref{sec:Toric-Alg-Prelims00}. They allow us to reduce class group computations to particularly nice cases where we end up enjoying a more incisive handle on computing class groups up to isomorphism. 

\begin{theorem}[cf., Fossum {\cite[Thm.~8.1, Cor.~8.2]{fossum2012divisor}}]\label{thm:classgroupsunderpolynomialextensions}
Working with polynomial ring extensions of a Noetherian normal domain $A$, we have isomorphisms for any $n \in \Z_{>0}$: 
$$\operatorname{Cl}(A) \cong \operatorname{Cl}(A [X_1, \ldots, X_n]) \cong \operatorname{Cl}(A [X_1^{\pm 1}, \ldots, X_n^{\pm 1} ]).$$
\end{theorem}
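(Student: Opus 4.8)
The plan is to reduce everything to the one-variable polynomial extension $A \subseteq A[X]$ and then invoke Theorem~\ref{thm:classgroupsunderlocalization}. First I would recall the standard structural facts: the Hilbert basis theorem keeps $A[X]$ Noetherian, and a polynomial ring over a normal domain is again normal (e.g.\ by Serre's criterion, or directly), so $\operatorname{Cl}(A[X])$ is defined; iterating handles $A[X_1,\dots,X_n]$. It therefore suffices to prove $\operatorname{Cl}(A) \cong \operatorname{Cl}(A[X])$ and then induct on $n$. The natural candidate is the homomorphism $\iota\colon \operatorname{Cl}(A) \to \operatorname{Cl}(A[X])$ determined by $[P] \mapsto [PA[X]]$ for height-one primes $P$; this is well defined because each $PA[X]$ is a height-one prime (as $A[X]/PA[X] \cong (A/P)[X]$ is a domain of the right dimension), and because the faithfully flat extension $A \hookrightarrow A[X]$ satisfies $(PA[X])^{(a)} = P^{(a)}A[X]$ by Proposition~\ref{prop: faithful flatness criterion 001} and commutes with finite intersections, so a principal intersection $\bigcap_i P_i^{(a_i)} = fA$ extends to the principal ideal $fA[X]$.

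For surjectivity, set $K = \operatorname{Frac}(A)$ and let $\Sigma = A \setminus \{0\}$, so that $\Sigma^{-1}A[X] = K[X]$ is a PID and $\operatorname{Cl}(K[X]) = 0$. By Theorem~\ref{thm:classgroupsunderlocalization}(1) the map $\operatorname{Cl}(A[X]) \to \operatorname{Cl}(K[X]) = 0$ is surjective with kernel --- hence all of $\operatorname{Cl}(A[X])$ --- generated by the classes of the height-one primes $Q \subseteq A[X]$ meeting $\Sigma$, i.e.\ with $\mathfrak{p} := Q \cap A \neq 0$. For such a $Q$, the extension $\mathfrak{p}A[X]$ has height $\geq 1$ (by the height/dimension formula for polynomial extensions) and sits inside $Q$, so $\operatorname{ht} Q = 1$ forces $Q = \mathfrak{p}A[X]$ with $\operatorname{ht}\mathfrak{p} = 1$. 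Thus $\operatorname{Cl}(A[X])$ is generated by the classes $[PA[X]]$ with $P \in \mathcal{P}$, and $\iota$ is onto.

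For injectivity, suppose $\sum_i a_i[P_i] \in \ker\iota$, so $J A[X] = (f)$ is principal in $A[X]$, where $J := \bigcap_i P_i^{(a_i)}$ is a nonzero ideal of $A$. Since $(f) = JA[X]$ is extended, writing $f = \sum_k j_k X^k$ with $j_k \in J$ and noting $j_k \in JA[X] = (f)$ yields $j_k = f h_k$; comparing $X$-degrees (each $j_k$ lies in $A$) forces $\deg_X f = 0$, so $f \in A$, and then $J = JA[X] \cap A = (f) \cap A = fA$ by faithful flatness, giving $\sum_i a_i[P_i] = 0$ in $\operatorname{Cl}(A)$. Induction on $n$ gives $\operatorname{Cl}(A) \cong \operatorname{Cl}(A[X_1,\dots,X_n])$. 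Finally, $A[X_1^{\pm 1},\dots,X_n^{\pm 1}]$ is the localization of $A[X_1,\dots,X_n]$ at the multiplicative set generated by $X_1,\dots,X_n$, each a prime element (its quotient by $(X_i)$ is a polynomial ring over $A$, hence a domain), so Theorem~\ref{thm:classgroupsunderlocalization}(2) yields $\operatorname{Cl}(A[X_1,\dots,X_n]) \cong \operatorname{Cl}(A[X_1^{\pm 1},\dots,X_n^{\pm 1}])$, completing the chain. I expect the main obstacle to be the surjectivity step --- pinning down, via the height formula for polynomial extensions, that every height-one prime of $A[X]$ meeting $A\setminus\{0\}$ is extended from a height-one prime of $A$ --- together with the degree bookkeeping needed to conclude $f \in A$ in the injectivity argument.
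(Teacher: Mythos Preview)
Your argument is correct and follows the same skeleton as the paper's proof sketch: induct on $n$, and for the Laurent isomorphism invoke Theorem~\ref{thm:classgroupsunderlocalization}(2) since each $X_i$ is a prime element of the polynomial ring. The one place you diverge is the base case $\operatorname{Cl}(A)\cong\operatorname{Cl}(A[X])$: the paper simply cites Fossum \cite[Thm.~8.1]{fossum2012divisor} as a black box, whereas you supply a self-contained proof. Your surjectivity step is in fact the standard argument (and essentially how Fossum proceeds): localize $A[X]$ at $A\setminus\{0\}$ to land in the PID $K[X]$, then use Theorem~\ref{thm:classgroupsunderlocalization}(1) to see that $\operatorname{Cl}(A[X])$ is generated by height-one primes meeting $A\setminus\{0\}$, which are forced to be extended. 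Your injectivity step via the $X$-degree argument is also fine; one small omission is the reduction to the case where all $a_i>0$ (any class in $\operatorname{Cl}(A)$ is represented by an effective divisor after adding a suitable principal divisor), but this is routine. In short, the paper outsources the substantive step while you carry it out, so your write-up is more self-contained at the cost of some extra bookkeeping.
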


\begin{proof}[Proof Sketch]
One can induce on $n$ with base case $n=1$. Assuming $n=1$, the left-hand isomorphism is the content of Fossum \cite[Thm.~8.1]{fossum2012divisor}. For the right-hand  isomorphism, apply Theorem \ref{thm:classgroupsunderlocalization}(2) to the polynomial ring $B = A[X]$ and the multiplicatively closed set $S \subseteq B$ generated by the prime element $X \in B$, so $S^{-1} B = A [X^{\pm 1}]$ is a Laurent polynomial ring in one variable over $A$. \qed   
\end{proof}

\begin{theorem}[cf., Fossum {\cite[Cor.~10.3, Cor.~10.7]{fossum2012divisor}}]\label{thm:classgroupsgradedbasechange}
Suppose that $A = \oplus_{n = 0}^\infty A_i$ is an $\N$-graded Noetherian normal domain where $A_0 = \F$ is a field, with homogeneous maximal ideal $\mathfrak{m} = \oplus_{n = 1}^\infty A_i$. Suppose that $\F'$ is any field extension of $A_0 = \F$, and that $A' := A \otimes_\F \F'$ is a Noetherian normal domain. Then $A'$ is faithfully flat over $A$ and the induced homomorphism 
$\operatorname{Cl}(A) \to \operatorname{Cl}(A')$ is injective. 
\end{theorem}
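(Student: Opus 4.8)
The plan is to treat the two assertions separately, since the faithful flatness is immediate while the injectivity on class groups is where the graded hypothesis does all the work. For faithful flatness I would simply fix an $\F$-basis of $\F'$: this exhibits $A' = A \otimes_\F \F'$ as a nonzero free $A$-module, and a nonzero free module is faithfully flat (equivalently, $\F'$ is faithfully flat over the field $\F$, and faithful flatness is stable under the base change $A = A \otimes_\F \F \to A \otimes_\F \F' = A'$). Along the way I would record the two facts I intend to reuse: first, $A'$ inherits an $\N$-grading with $(A')_0 = A_0 \otimes_\F \F' = \F'$, so $A'$ is again a connected graded Noetherian normal domain over a field; second, since $\F'$ is faithfully flat over $\F$, any $\F$-module $M$ with $M \otimes_\F \F' = 0$ already vanishes.

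For the injectivity of $\operatorname{Cl}(A) \to \operatorname{Cl}(A')$, the first step is to reduce to homogeneous representatives: because $A$ is $\N$-graded with $A_0$ a field, every class in $\operatorname{Cl}(A)$ is the class of a homogeneous divisorial (height-one, unmixed, reflexive) ideal $I$, which is a standard feature of the graded theory, cf. Fossum \cite[Ch.~10]{fossum2012divisor}. The induced map sends $[I]$ to $[IA']$, where $IA' = I \otimes_A A' = I \otimes_\F \F'$ inherits the grading $(IA')_d = I_d \otimes_\F \F'$ and is again divisorial over $A'$, since reflexive hulls commute with the flat base change $A \to A'$ (for finitely presented modules, $\operatorname{Hom}(-,A)$ commutes with $-\otimes_A A'$). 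So it suffices to prove: if $IA'$ is principal, then $I$ is principal.

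The second step is to descend the principal generator using Hilbert functions. Assume $IA'$ is principal. Since $IA'$ is homogeneous and $A'$ is a connected graded domain (in which units lie in degree zero), $IA'$ is generated by a single homogeneous element $f$; setting $e := \deg f$ and comparing degree-$e$ pieces gives $\F' f = (IA')_e = I_e \otimes_\F \F'$, so $\dim_\F I_e = 1$. Picking a homogeneous $\F$-basis vector $g$ of $I_e$, we have $g \in I$ and $f$ an $(\F')^{\times}$-multiple of $g$, whence $IA' = gA'$ and therefore $(I/gA) \otimes_\F \F' = IA'/gA' = 0$. By the faithful flatness of $\F'$ over $\F$ recorded above, $I = gA$ is principal, so $[I] = 0$ and the map is injective.

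The point where the hypotheses are genuinely needed — and what I expect to be the main obstacle — is that faithful flatness alone does not force $\operatorname{Cl}(A) \to \operatorname{Cl}(A')$ to be injective (compare the discussion of $\mathcal{I}(A)$ above, where even a prime ideal can fail to extend to a prime), so the argument must invoke the $\N$-grading twice: once to obtain a homogeneous divisorial representative, and once more so that a principal homogeneous ideal of $A'$ has a homogeneous generator whose degree is visible on both $A$ and $A'$, which is exactly what lets that generator be pulled back. The only bookkeeping demanding care is that these graded facts be used in the stated generality (only $A_0 = \F$ is assumed, with no standard grading), and that the class-group map really is $[I] \mapsto [IA']$ via the base-change compatibility of reflexive hulls.
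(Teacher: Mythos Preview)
The paper does not prove this theorem; it is recorded as a consolidation of Fossum \cite[Cor.~10.3, Cor.~10.7]{fossum2012divisor} with no argument given. Your proof is correct and essentially unpacks those two citations: the reduction to a homogeneous divisorial representative is the content of Cor.~10.3, and your Hilbert-function descent of a homogeneous principal generator (via $\dim_{\F'} (IA')_e = \dim_\F I_e$ together with faithful flatness of $\F'/\F$) is the mechanism behind Cor.~10.7. The auxiliary facts you invoke---that reflexivity of $I$ passes to $IA'$ along a flat Noetherian base change, and that a homogeneous principal ideal in a connected graded domain has a homogeneous generator---are standard and correctly applied. So there is nothing to compare: you have supplied a self-contained proof where the paper only gives a reference.
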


\subsection{The Proof of Theorem 1.1} 

We start by recalling the following proposition deduced in  \cite{walker2016}. To clarify, an ideal has \textbf{pure height $h$} is every associated prime has height $h$. In particular, none are embedded. 

\begin{prop}[cf., {\cite[Prop.~2.5]{walker2016}}]\label{prop: unmixed ideals 1}
Let $R$ be a Noetherian normal domain of positive Krull dimension, and $\mathfrak{q}$ any ideal of pure height one with associated primes $P_1, \ldots, P_c$. Then: 
\begin{enumerate}
\item[(a)] There exist positive integers $b_1, \ldots, b_c$, uniquely determined by $\mathfrak{q}$, such that the symbolic power $\mathfrak{q}^{(E)} = P_1^{(E b_1)} \cap \ldots \cap P_c^{(E b_c)}$ for all $E \ge 0$. 
\item[(b)] If either $(1)$ $D \cdot \operatorname{Cl}(R) = 0$, or $(2)$ the class $[\mathfrak{q}] \in \operatorname{Cl}(R)$ has finite order $D$, then for all integers $r \ge 0$, $\mathfrak{q}^{(Dr)} = (\mathfrak{q}^{(D)})^r$ is principal and  $\mathfrak{q}^{(Dr)} \subseteq \mathfrak{q}^r$.
\end{enumerate}
\end{prop}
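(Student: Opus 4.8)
The plan is to prove (a) by localizing $\mathfrak{q}$ at each of its (necessarily height-one) associated primes, where $R$ becomes a DVR, and to prove (b) by a single divisor-class computation into which both hypotheses collapse. For (a): since $R$ is a Noetherian normal domain, each $R_{P_i}$ is a DVR; let $\nu_{P_i}$ be its normalized valuation. Because $P_i\in\operatorname{Ass}_R(R/\mathfrak{q})$ forces $\mathfrak{q}\subseteq P_i$, the integer $b_i:=\nu_{P_i}(\mathfrak{q})=\min\{\nu_{P_i}(f):f\in\mathfrak{q}\setminus\{0\}\}$ satisfies $b_i\ge 1$, and $\mathfrak{q}R_{P_i}=P_i^{b_i}R_{P_i}$. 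Since $\mathfrak{q}$ has pure height one, $P_1,\dots,P_c$ are precisely the primes occurring in a minimal primary decomposition of $\mathfrak{q}$, with $P_i$-primary component $\mathfrak{q}R_{P_i}\cap R=P_i^{b_i}R_{P_i}\cap R=P_i^{(b_i)}$; hence $\mathfrak{q}=\bigcap_{i=1}^c P_i^{(b_i)}$. The same reasoning applies to $\mathfrak{q}^E$ for $E>0$ (its minimal primes are again $P_1,\dots,P_c$, any embedded prime has height $\ge 2$, and $\mathfrak{q}^E R_{P_i}=P_i^{Eb_i}R_{P_i}$), and $\mathfrak{q}^{(E)}$ is by definition the intersection of the $P_i$-primary components of $\mathfrak{q}^E$; this yields $\mathfrak{q}^{(E)}=\bigcap_{i=1}^c P_i^{(Eb_i)}$ for all $E\ge 0$ (the case $E=0$ reading $R=\bigcap_i R$). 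Uniqueness is immediate, as $b_i=\nu_{P_i}(\mathfrak{q})$ depends only on $\mathfrak{q}$.

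For (b), observe first that each hypothesis gives $D\cdot[\mathfrak{q}]=0$ in $\operatorname{Cl}(R)$: in case (1) this holds because $D$ annihilates all of $\operatorname{Cl}(R)$, and in case (2) it is the assumption itself (minimality of the order is not needed). Here $[\mathfrak{q}]=\sum_{i=1}^c b_i[P_i]$ is the class of the divisorial ideal $\mathfrak{q}=\bigcap_i P_i^{(b_i)}$. Fix $r\ge 0$. By part (a), $\mathfrak{q}^{(Dr)}=\bigcap_{i=1}^c P_i^{(Drb_i)}$ is a divisorial ideal of class $\sum_i Drb_i[P_i]=r\,(D\cdot[\mathfrak{q}])=0$; since an effective Weil divisor with vanishing class in $\operatorname{Cl}(R)$ is the divisor of a nonzero element of $R$ (via $R=\bigcap_{P\in\mathcal{P}}R_P$ and the standard identification of divisorial ideals with linear-equivalence classes of Weil divisors, cf.\ Fossum and Hartshorne II.6), the ideal $\mathfrak{q}^{(Dr)}$ is principal. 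Taking $r=1$, write $\mathfrak{q}^{(D)}=(g)R$ with $\operatorname{div}(g)=\sum_i Db_i\,P_i$; then $\operatorname{div}(g^r)=r\operatorname{div}(g)=\sum_i Drb_i\,P_i$, so $(\mathfrak{q}^{(D)})^r=(g^r)R=\bigcap_i P_i^{(Drb_i)}=\mathfrak{q}^{(Dr)}$ by part (a). Finally, $\mathfrak{q}^{(D)}\subseteq\mathfrak{q}^{(1)}=\mathfrak{q}$ because symbolic powers form a decreasing chain and $D\ge 1$, so raising to the $r$-th power gives $\mathfrak{q}^{(Dr)}=(\mathfrak{q}^{(D)})^r\subseteq\mathfrak{q}^r$.

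The step I expect to require the most care is the converse half of the divisor-class dictionary invoked in (b): the definition of $\operatorname{Cl}(R)$ recorded above only says that a principal intersection $\bigcap_i P_i^{(a_i)}$ has trivial class, whereas (b) uses the reverse implication. I would justify this by recalling that the reflexive (divisorial) fractional ideals of $R$ form a group under the reflexive product, that $\bigcap_i P_i^{(a_i)}\mapsto\sum_i a_i[P_i]$ induces an isomorphism of that group modulo principal ideals onto $\operatorname{Cl}(R)$, and hence that an effective divisor of trivial class is genuinely $\operatorname{div}(f)$ for some $f\in R$ rather than merely linearly equivalent to $0$; the remaining content is routine localization bookkeeping already packaged in part (a).
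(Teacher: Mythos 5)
Your proof is correct: part (a) is the standard localization argument at the height-one associated primes (where $R_{P_i}$ is a DVR), and part (b) correctly reduces both hypotheses to $D\cdot[\mathfrak{q}]=0$ and invokes the divisorial-ideal/class-group dictionary, including the converse direction (trivial class of an effective divisor $\Rightarrow$ principal ideal) that you rightly flag as the only nontrivial input. The paper itself does not reprove this proposition but cites \cite[Prop.~2.5]{walker2016}; your argument matches the machinery the paper uses around it (Weil divisors of pure height one ideals, checking identities after localizing at height-one primes), so there is nothing to contrast.
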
 

 Per part (a) of this proposition, we may define Weil divisors 
$$\mbox{div}[\mathfrak{q}] := b_1 \cdot P_1  + \cdots + b_c \cdot P_c, \quad  \mbox{div} [\mathfrak{q}^{(E)}] := E \cdot  \mbox{div} [\mathfrak{q}] = E b_1 \cdot P_1 + \cdots + E b_c \cdot P_c,$$ 
where $E >0$. In particular, $\mbox{div} [\mathfrak{q}^{(A+B)}] = \mbox{div} [\mathfrak{q}^{(A)}] + \mbox{div} [\mathfrak{q}^{(B)}]$ for all nonnegative integers $A$ and $B$. 


\begin{proof}[Proof of Lemma \ref{thm: du Val bound 1}] Our proof of  the first claim replaces $r-1$ with $r \ge 0$. Per Proposition \ref{prop: unmixed ideals 1}(b), suppose 
$\mathfrak{q}^{(Dr)} = (\mathfrak{q}^{(D)})^r = (f^r)$  is principal for all $r \ge 0$ and some nonzero $f \in R$. Now set $I = \mathfrak{q}^{(s)}$. Following the first proof in Hochster's notes \cite{hoch2}, we have a short exact sequence 
$$0 \to \frac{(f^r)R}{(f^r)I} \to \frac{R}{(f^r)I} \to \frac{R}{(f^r) R} \to 0$$
and $\frac{(f^r)R}{(f^r)I} \cong R/I$ as $R$-modules via the $R$-linear map $\phi \colon R \twoheadrightarrow \frac{(f^r)R}{(f^r)I}$ with $\phi(g) = \overline{g  f^r} $. 
Thus per our exact sequence (cf., Matsumura \cite[Thm.~6.3]{Matsumura}), $$\varnothing \neq \mbox{Ass}_R (R/(f^r)I) \subseteq \mbox{Ass}_R (R/I) \cup \mbox{Ass}_R (R/(f^r)R)$$  and so $\mbox{Ass}_R (R/(f^r)I)$ contains only height one primes since the latter two sets do. Finally, comparing Weil divisors of pure height one ideals 
\begin{align*}
\mbox{div} [(f^r)I  =   (\mathfrak{q}^{(D)})^r \mathfrak{q}^{(s)}] &\stackrel{(*)}{=}  \mbox{div}[(f^r)R ] + \mbox{div}[I] \\
&= \mbox{div} [\mathfrak{q}^{(Dr)}] + \mbox{div} [\mathfrak{q}^{(s)}] = 
\mbox{div}  [\mathfrak{q}^{(Dr + s)}].
\end{align*}
As Hochster notes, one can check identity (*) after first localizing at each height one prime $Q$; in this case, the identity is obvious in a DVR.
Per (*), the two pure height one ideals $\mathfrak{q}^{(Dr + s)} ,  (\mathfrak{q}^{(D)})^r \mathfrak{q}^{(s)}$ have the exact same primary decomposition and hence are equal. To conclude:  since $\mathfrak{q}^{(D)} \subseteq \mathfrak{q}^{(1)} = \mathfrak{q}$, 
 setting $s = 1$ yields 
$\mathfrak{q}^{(D(r-1) + 1)} = (\mathfrak{q}^{(D)})^{r-1} \mathfrak{q}^{(1)} \subseteq \mathfrak{q}^{r-1 + 1} = \mathfrak{q}^r$.  \qed\end{proof}

We close by remarking that after adapting the statement of \cite[Lem.~2.6]{walker2016} to feature the Harbourne-Huneke bounds, the exact same proof we gave in \cite{walker2016} will suffice. Namely, we reduce to the local case, and then invoke Lemma \ref{thm: du Val bound 1} from the present paper.


\section{Toric Algebra Preliminaries}\label{sec:Toric-Alg-Prelims00}

We review notation and relevant facts from toric algebra, citing Cox-Little-Schenck \cite[Ch.1,3,4]{torictome} and Fulton \cite[Ch.1,3]{introtoric}. A lattice is a free abelian group of finite rank. We 
  fix a perfect bilinear pairing $\langle \cdot , \cdot \rangle \colon M \times N \to \Z$ between two lattices $M$ and $N$;  this identifies $M$ with $\operatorname{Hom}_\Z (N, \Z)$ and  $N$ with $\operatorname{Hom}_\Z (M, \Z)$.  
Our pairing extends to a perfect pairing of finite-dimensional vector spaces $\langle \cdot , \cdot \rangle \colon M_\RR \times N_\RR \to \RR$, where $M_\RR := M \otimes_\Z \RR$ and $N_\RR := N \otimes_\Z \RR$. 

Fix an \textbf{$N$-rational} polyhedral cone and its $M$-rational dual: respectively, for some \textbf{finite} subset $G \subseteq N - \{0\}$ these are closed, convex sets of the form 
\begin{align*}
C &= \operatorname{Cone}(G) := \left\lbrace\sum_{v \in G} a_v \cdot v \colon \mbox{ each }a_v \in \RR_{\ge 0}\right\rbrace  \subseteq N_\R, \mbox{ and }   \\
C^\vee &:= \{w \in M_\R \colon \langle w , v \rangle \ge 0 \mbox{ for all }v \in C \} = \{w \in M_\R \colon \langle w , v \rangle \ge 0 \mbox{ for all }v \in G\}.
\end{align*}
By definition, the \textbf{dimension} of a cone in $M_\R$ or $N_\R$ is the dimension of the real vector subspace it spans; a cone is \textbf{full(-dimensional)} if it spans the full ambient space. A cone in $M_\R$ or $N_\R$ is \textbf{pointed (or strongly convex)} if it contains no line through the origin.  
A \textbf{face} of $C$ is a convex polyhedral cone $F$ in $N_\R$ obtained by intersecting $C$ with a hyperplane which is the kernel of a linear functional $m \in C^\vee$; $F$ is \textbf{proper} if $F \neq C$. 
When $C$ is both $N$-rational and pointed, so is every face $F$. Each such face $F \neq \{0\}$ has a uniquely-determined set $G_F$ of primitive generators. By definition, $v \in N$ is \textbf{primitive} if $\frac{1}{k} \cdot v \not\in N$ for all $k \in \Z_{>1}$.  

There is a bijective inclusion-reversing correspondence between faces $F$ of $C$ and faces $F^*$ of $C^\vee$, where $F^* = \{w \in C^\vee \colon \langle w , v \rangle = 0 \mbox{ for all }v \in F\}$ is the face of $C^\vee$ \textbf{dual to} $F$ \cite[Sec.~1.2]{introtoric}. Under this correspondence, 
either cone is pointed  if and only if the other is full, and   
\begin{equation}\label{eqn:face-duality-identity01}
\dim (F) + \dim (F^*) = \dim (N_\R) = \dim(M_\R).
\end{equation}

Fix an arbitrary ground field $\F$ and a cone $C$ as above in $N_\R$. The semigroup ring $R_\F = \F [C^\vee \cap M]$ is the \textbf{toric $\F$-algebra associated to $C$}. This ring $R_\F$ is a normal domain   of finite type over $\F$ \cite[Thm.~1.3.5]{torictome}. Note that $R_\F$ has an $\F$-basis $\{ \chi^m  \colon m \in  C^\vee \cap M \}$ of monomials, giving $R_\F$ an $M$-grading, where $\deg(\chi^{m}) := m$. 
A \textbf{monomial ideal (also called an $M$-homogeneous or torus-invariant ideal)} in $R_\F$ is an ideal generated by a subset of these monomials.  When $C^\vee$ is pointed, $R_\F$ also has a non-canonical $\N$-grading obtained by fixing any group homomorphism $M \to \Z$  taking positive values $C^\vee \cap M - \{0\}$. 
The set $\{\chi^m \colon m \in C^\vee \cap M - \{0\} \}$ generates the unique homogeneous maximal ideal $\mathfrak{m}$ under this $\N$-grading.  
 
\begin{remark*}\label{rem:strong-convexity-stipulation}
In forming the toric algebra $\F [C^\vee \cap M]$, there is no loss of generality in assuming $C$ is pointed in $N_\R$. Indeed, because $C^\vee \cap M = C^\vee \cap M'$ where $M' = M \cap \{\mbox{$\R$-span of $C^\vee$ in $M_\R$}\},$ we may replace $M$ by $M'$ to assume $C^\vee$ is full in $(M')_\R$. Now, replacing $N$ and $C$ by the duals of $M'$ and $C^\vee$, we may assume that $C$ is pointed in $N' = \operatorname{Hom}_\Z (M' , \Z)$. See \cite[Thm.~1.3.5]{torictome} for details. 
\end{remark*}

\noindent Fix a face $F$ of a pointed rational cone $C$:   \cite[p.53]{introtoric}   
 records a surjective $M$-graded ring map  
\begin{align*}
\phi_F \colon R_\F = \F[C^\vee \cap M] \twoheadrightarrow \F[F^* \cap M], \quad \phi_F (\chi^{m}) &= 
\begin{cases} \chi^{m}  &\mbox{ if $\langle m , v \rangle = 0$ for all }v \in F \\
0 &\mbox{ if $\langle m , v \rangle > 0$ for some }v \in F.   \end{cases} 
\end{align*}
Both rings are domains. The \textbf{monomial prime ideal of $F$}, $P_F : = \ker (\phi_F)$, has height equal to $\dim (F)$.     
Conversely, any monomial prime of $R_\F$ corresponds bijectively to a face of $C$. 

\begin{lemma}\label{lem:prime-positivity}
Fix a face $F$ of a pointed rational cone $C$, and the monomial prime $P_F \subseteq R_\F$ above. Let $G_F$ be the set of primitive generators of $F$, and set $v_F : =  \sum_{v \in G_F} v \in F \cap N$.  Then 
\begin{equation}\label{eqn:mono-prime-defn}
P_F = (\{\chi^{m} \colon \mbox{$m \in C^\vee \cap M$ and the integer $\langle m , v_F  \rangle > 0$}\})R_\F.
\end{equation}
\end{lemma}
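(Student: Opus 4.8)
The plan is to show set-theoretic equality \eqref{eqn:mono-prime-defn} by proving both inclusions, working monomial-by-monomial since both sides are monomial ideals in $R_\F = \F[C^\vee \cap M]$. Write $J$ for the ideal on the right-hand side, generated by all $\chi^m$ with $m \in C^\vee \cap M$ and $\langle m, v_F\rangle > 0$. The essential point is a dichotomy on lattice points $m \in C^\vee \cap M$: since $v_F = \sum_{v \in G_F} v$ with every $v \in G_F \subseteq F$, and since $F \subseteq C$ forces $\langle m, v\rangle \ge 0$ for each $v \in G_F$, we get $\langle m, v_F\rangle = \sum_{v \in G_F} \langle m, v\rangle \ge 0$ always; moreover $\langle m, v_F \rangle = 0$ if and only if $\langle m, v\rangle = 0$ for every $v \in G_F$, which (since $G_F$ generates $F$ as a cone) is equivalent to $\langle m, v\rangle = 0$ for all $v \in F$. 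Thus for $m \in C^\vee \cap M$ exactly one of the following holds: either $\langle m, v_F\rangle = 0$, equivalently $m \in F^* \cap M$ and $\phi_F(\chi^m) = \chi^m$; or $\langle m, v_F\rangle > 0$, equivalently $\langle m, v\rangle > 0$ for some $v \in F$ and $\phi_F(\chi^m) = 0$.

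For the inclusion $J \subseteq P_F$: each generator $\chi^m$ of $J$ has $\langle m, v_F\rangle > 0$, hence by the dichotomy $\phi_F(\chi^m) = 0$, so $\chi^m \in \ker(\phi_F) = P_F$. For the reverse inclusion $P_F \subseteq J$: take $f \in P_F$ and expand $f = \sum_{m} c_m \chi^m$ in the monomial $\F$-basis (finite sum, $c_m \in \F^\times$, $m \in C^\vee \cap M$). Since $\phi_F$ is $M$-graded and sends each basis monomial either to itself or to $0$, applying $\phi_F$ gives $0 = \phi_F(f) = \sum_{m : \langle m, v_F\rangle = 0} c_m \chi^m$; as the surviving $\chi^m$ are distinct basis elements of the domain $\F[F^* \cap M]$, every such $c_m$ vanishes. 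Hence in fact $f = \sum_{m : \langle m, v_F \rangle > 0} c_m \chi^m$, and every monomial appearing lies among the generators of $J$, so $f \in J$. Combining the two inclusions gives \eqref{eqn:mono-prime-defn}.

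The only genuine content — and the step I would be most careful about — is the claim that $\langle m, v\rangle = 0$ for all $v \in G_F$ implies $\langle m, v\rangle = 0$ for all $v \in F$. This is immediate because $F = \operatorname{Cone}(G_F)$, so any $v \in F$ is a nonnegative real combination $v = \sum_{u \in G_F} a_u u$ and hence $\langle m, v\rangle = \sum_u a_u \langle m, u\rangle = 0$ by linearity of the pairing; the converse direction is trivial since $G_F \subseteq F$. Everything else is bookkeeping with the explicit description of $\phi_F$ recorded just before the lemma. I should also note that the hypothesis that $C$ is pointed guarantees $F$ has a well-defined primitive generating set $G_F$ (invoked in the excerpt), so $v_F$ is well-defined, and that all of this is consistent with $\operatorname{height}(P_F) = \dim F$ since $P_F = \ker(\phi_F)$ and $\F[F^* \cap M]$ has Krull dimension $\dim(F^*) = \dim(N_\R) - \dim(F)$ by \eqref{eqn:face-duality-identity01}.
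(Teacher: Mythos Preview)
Your proof is correct and follows essentially the same approach as the paper: both arguments rest on the observation that for $m \in C^\vee \cap M$ each $\langle m, v\rangle$ with $v \in G_F$ is a nonnegative integer, so $\langle m, v_F\rangle = \sum_{v \in G_F} \langle m, v\rangle$ is positive if and only if some summand is positive, which is exactly the condition $\phi_F(\chi^m) = 0$. You have simply written out the two inclusions and the reduction from $F$ to $G_F$ more explicitly than the paper does.
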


\begin{proof}
First,  in defining $\phi_F (\chi^m)$ above, notice we can work with $v \in G_F$ without loss of generality. Now, fix $m \in C^\vee \cap M$. Then $\langle m , v \rangle \in \Z_{\ge 0}$ for all $v \in C \cap N$. As $\langle \cdot , \cdot   \rangle$ is bilinear, \eqref{eqn:mono-prime-defn} follows since a sum of nonnegative integers is positive if and only if one of the summands is positive.    
\end{proof}

\begin{prop}[Minkowski sum-Ideal sum]\label{prop:MinksumIdealsum}
Suppose $C \subseteq N_\RR$ is a pointed rational polyhedral cone, and $R_\F = \F [C^\vee \cap M]$ is the corresponding toric $\F$-algebra. When a face $F = \mbox{Cone}(G_F) = \rho_{1} + \ldots + \rho_{\ell}$ as a Minkowski sum of rays, 
\begin{equation}\label{eqn: Minkowski sum-ideal sum decomp 001}
P_F = \sum_{j=1}^\ell P_{\rho_{j}}
\end{equation} 
as a sum of ideals.
\end{prop}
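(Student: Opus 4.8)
The plan is to prove the two containments $P_F \subseteq \sum_j P_{\rho_j}$ and $\sum_j P_{\rho_j} \subseteq P_F$ separately, using the explicit monomial description from Lemma~\ref{lem:prime-positivity}. Recall that for a face $F$ with primitive generators $G_F$, setting $v_F = \sum_{v \in G_F} v$, we have $P_F = (\{\chi^m : m \in C^\vee \cap M,\ \langle m, v_F\rangle > 0\})R_\F$. When $F = \rho_1 + \cdots + \rho_\ell$ as a Minkowski sum of rays, the generators of the rays $\rho_j$ are exactly the primitive generators in $G_F$, so $v_F = v_{\rho_1} + \cdots + v_{\rho_\ell}$, where $v_{\rho_j}$ is the primitive generator of the ray $\rho_j$. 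This additive decomposition of $v_F$ is the combinatorial heart of the argument.

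First I would show $\sum_j P_{\rho_j} \subseteq P_F$. A monomial $\chi^m$ generating some $P_{\rho_j}$ satisfies $\langle m, v_{\rho_j}\rangle > 0$. Since $m \in C^\vee$ and every $v_{\rho_i} \in C \cap N$, we have $\langle m, v_{\rho_i}\rangle \ge 0$ for all $i$; hence $\langle m, v_F\rangle = \sum_i \langle m, v_{\rho_i}\rangle \ge \langle m, v_{\rho_j}\rangle > 0$, so $\chi^m \in P_F$. Since $P_F$ is an ideal containing every monomial generator of each $P_{\rho_j}$, it contains the sum $\sum_j P_{\rho_j}$.

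Conversely, for $P_F \subseteq \sum_j P_{\rho_j}$, take a monomial generator $\chi^m$ of $P_F$, so $m \in C^\vee \cap M$ with $\langle m, v_F\rangle > 0$. Writing $\langle m, v_F\rangle = \sum_{j=1}^\ell \langle m, v_{\rho_j}\rangle$ as a sum of nonnegative integers that is strictly positive, at least one summand $\langle m, v_{\rho_j}\rangle$ is positive; hence $\chi^m$ is one of the monomial generators of $P_{\rho_j}$, so $\chi^m \in P_{\rho_j} \subseteq \sum_i P_{\rho_i}$. Since $M$-homogeneous ideals are determined by their monomial elements, this gives the containment. Combining the two inclusions yields \eqref{eqn: Minkowski sum-ideal sum decomp 001}.

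I do not anticipate a serious obstacle here; the one point deserving care is the verification that the primitive generators of the rays $\rho_j$ coincide with the set $G_F$ of primitive generators of $F$ (so that $v_F$ decomposes as claimed) — this is where the hypothesis "Minkowski sum of rays" is used, and it is essentially the statement that $F$ is simplicial with those rays as its extreme rays. Once that identification is in hand, everything reduces to the elementary observation, already invoked in the proof of Lemma~\ref{lem:prime-positivity}, that a finite sum of nonnegative integers is positive exactly when one of its terms is.
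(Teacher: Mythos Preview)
Your proposal is correct and follows essentially the same route as the paper: both arguments reduce to the elementary observation that a finite sum of nonnegative integers is positive exactly when one of its terms is, and then conclude that $P_F$ and $\sum_j P_{\rho_j}$ share a monomial generating set. The paper phrases this by expanding an arbitrary $v \in F$ as $\sum_j a_j u_{\rho_j}$ and testing positivity of $\langle w, v\rangle$, whereas you invoke Lemma~\ref{lem:prime-positivity} to work with the single vector $v_F = \sum_j v_{\rho_j}$; this is only a cosmetic difference. One small correction to your closing caveat: the identification $G_F = \{v_{\rho_1}, \ldots, v_{\rho_\ell}\}$ does \emph{not} require $F$ to be simplicial---it is simply the fact that a pointed rational cone is the Minkowski sum of its extreme rays, which holds in general.
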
 
\begin{proof}
Let $G_F = \{u_{\rho_j} \colon 1 \le  j \le \ell \}$ consist of the primitive ray generators. Any $v \in F$ satisfies $$v = \sum_{j=1}^\ell a_j u_{\rho_j} , \mbox{ for some }a_1, \ldots, a_\ell \in \RR_{\ge 0}.$$ 
Given any $w \in C^\vee$, $\langle w, v \rangle \ge 0$ for all $v \in C$. 
Thus for $v \in F$ as above, $$0 \le \langle w, v \rangle = \sum_{j=1}^\ell a_j \langle w, u_{\rho_j} \rangle, \mbox{ for some }a_1, \ldots, a_\ell \in \RR_{\ge 0},$$ and so $\langle w, v \rangle$ is positive if and only if $\langle w, u_{\rho_j} \rangle  > 0$ for some $1 \le j \le \ell$. We infer from this that the monomial ideals $P_F$ and $\sum_{j=1}^\ell P_{\rho_{j}}$ have a generating set in common, and hence are equal. \qed
\end{proof}

\begin{definition}
With notation as in Proposition \ref{prop:MinksumIdealsum}, we call \eqref{eqn: Minkowski sum-ideal sum decomp 001} a   \textbf{Minkowski sum-ideal sum decomposition} for $P_F$. 
\end{definition}

\begin{remark*} Adapting the proof of Proposition \ref{prop:MinksumIdealsum} accordingly, we could use any decomposition of $F$ as a Minkowski sum of faces, the latter need not be rays. 
\end{remark*}

Our next goalpost is Lemma \ref{lem: monomial primes as sums of extended ideals 001} on  decomposing monomial primes in tensor products of normal toric rings.  
Fix two pointed rational polyhedral cones  $C_i = \mbox{Cone}(S_i) \subset (N_i)_\RR$ $(i=1,2)$, where each $S_i$ consists of the primitive ray generators. Define lattices $N = N_1 \times N_2, M = M_1 \times M_2$ per the standing conventions. Let $\langle , \rangle_i \colon M_i \times N_i \to \Z$ and $\langle , \rangle \colon M \times N \to \Z$ indicate our three designated bilinear pairings. 
\begin{remark*}\label{rem:compatibilityofthreepairings}
While tedious, we could pedantically write down compatibility conditions to the effect that the output values of these pairings will agree relative to the obvious $\Z$-linear embeddings $N_i \hookrightarrow N$ and $M_i \hookrightarrow M$, e.g., $N_1 \cong N_1 \times \{0\}$. 
In particular, in a slight abuse of notation, going forward we identify $$\langle, \rangle = \langle , \rangle_1 + \langle, \rangle_2.$$ This generalizes the usual dot product setup naturally, $\Z^E \subseteq \R^E$, where $E = m+ n$ as a sum of positive integers.  
\end{remark*} 
The \textbf{product cone} $C = C_1 \times C_2$ in $N_\RR$ is a pointed rational polyhedral cone. In terms of ray generators, $C$ is generated as  
\begin{align*}
C = (C_1 \times \{0\})+ (\{0\} \times C_2) = \mbox{Cone}[(S_1 \times \{0\}) \cup (\{0\} \times S_2)]  \subseteq N_\RR.
\end{align*}
Note that $$C^\vee = (C_1 \times \{0\})^\vee \cap (\{0\} \times C_2)^\vee = C_1^\vee \times C_2^\vee.$$ For the right-hand equality, we defer to Remark \ref{rem:compatibilityofthreepairings}.  


\begin{lemma}\label{lem: monomial primes as sums of extended ideals 001}
\textit{For $n \ge 2$, let $R_1, \ldots, R_n$ be normal toric rings over a field $\F$, built from pointed rational polyhedral cones $C_i \subseteq (N_i)_\RR$, respectively. Consider the normal toric ring $R \cong R_1 \otimes_\F \cdots \otimes_\F R_n$. Every monomial prime ideal $Q$ in $R$ can be expressed as a sum $Q = \sum_{i=1}^n (P_i R)$ of expanded  ideals, where each ideal $P_i  \subseteq R_i$ is a monomial prime.} 
\end{lemma}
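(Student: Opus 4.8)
The plan is to reduce the general $n$-fold case to the two-factor case by induction on $n$, and then prove the two-factor case by combining the product-cone description $C = C_1 \times C_2$, $C^\vee = C_1^\vee \times C_2^\vee$ recorded just above with the face–prime dictionary (the map $\phi_F$ and $P_F = \ker(\phi_F)$) together with the Minkowski sum–ideal sum decomposition (Proposition~\ref{prop:MinksumIdealsum}). So first I would set up the inductive step: since $R \cong (R_1 \otimes_\F \cdots \otimes_\F R_{n-1}) \otimes_\F R_n$ and a tensor product of normal affine semigroup rings over $\F$ is again one (built from the product cone), it suffices to treat $n = 2$; the expanded ideals compose correctly because, for the inner tensor factor $R' = R_1 \otimes_\F \cdots \otimes_\F R_{n-1}$, an ideal of the form $P' R'$ further expands to $P' R$, and $P_i R' \cdot R = P_i R$.

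For the case $n = 2$ with $R \cong R_1 \otimes_\F R_2 = \F[C^\vee \cap M]$, $C = C_1 \times C_2$, I would argue as follows. A monomial prime $Q \subseteq R$ equals $P_F$ for a unique face $F$ of $C$. Because $C = (C_1 \times \{0\}) + (\{0\} \times C_2)$, the face $F$ decomposes: $F = F_1 \times F_2$ where $F_i$ is a face of $C_i$ (this is the standard fact that faces of a product cone are products of faces, provable by intersecting with the supporting hyperplane of a functional $m = (m_1, m_2) \in C^\vee = C_1^\vee \times C_2^\vee$ and using that $\langle m, (v_1,v_2)\rangle = \langle m_1, v_1\rangle_1 + \langle m_2, v_2\rangle_2$ is a sum of nonnegative terms). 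Writing $F$ as the Minkowski sum of its rays and grouping the rays according to which of the two factors they come from, $F = (F_1 \times \{0\}) + (\{0\} \times F_2)$, Proposition~\ref{prop:MinksumIdealsum} gives $P_F = P_{F_1 \times \{0\}} + P_{\{0\} \times F_2}$ as a sum of ideals in $R$.

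It then remains to identify $P_{F_1 \times \{0\}}$ with the expansion $P_{F_1} R$, where $P_{F_1} \subseteq R_1$ is the monomial prime of the face $F_1 \subseteq C_1$ (and symmetrically for the second factor). This I would check on monomial generators using Lemma~\ref{lem:prime-positivity}: with $v_{F_1} = \sum_{v \in G_{F_1}} v$, the prime $P_{F_1} R_1$ is generated by the $\chi^{m_1}$ with $\langle m_1, v_{F_1}\rangle_1 > 0$, so $P_{F_1} R$ is generated by these same monomials viewed in $R$; meanwhile $v_{F_1 \times \{0\}} = (v_{F_1}, 0)$, and for $m = (m_1, m_2) \in C^\vee \cap M$ one has $\langle m, (v_{F_1},0)\rangle = \langle m_1, v_{F_1}\rangle_1$, so $P_{F_1 \times \{0\}}$ is generated by the monomials $\chi^{(m_1,m_2)}$ with $\langle m_1, v_{F_1}\rangle_1 > 0$ — the same generating set. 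Hence $P_{F_1 \times \{0\}} = P_{F_1} R$, and $Q = P_{F_1} R + P_{F_2} R$, completing the $n=2$ case; feeding this back into the induction gives $Q = \sum_{i=1}^n (P_i R)$ in general.

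The main obstacle is the clean bookkeeping across the three bilinear pairings and the lattice embeddings $N_i \hookrightarrow N$, $M_i \hookrightarrow M$ — in particular pinning down that faces of the product cone are exactly products of faces of the factors, and that the distinguished interior vectors $v_F$ behave additively under the product decomposition. These are the points where the "slight abuse of notation" $\langle,\rangle = \langle,\rangle_1 + \langle,\rangle_2$ from Remark~\ref{rem:compatibilityofthreepairings} must be used carefully; everything else is a routine comparison of monomial generating sets.
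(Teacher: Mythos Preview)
Your proposal is correct and follows essentially the same route as the paper: induction to $n=2$, the product-face decomposition $F = F_1 \times F_2$, Proposition~\ref{prop:MinksumIdealsum} to split $P_F$, and then the identification $P_{F_i \times \{0\}} = P_{F_i} R$ via the monomial description of the primes (the paper reduces this last step to rays before comparing generators rather than invoking Lemma~\ref{lem:prime-positivity} for the whole face, but the idea is the same). One small tightening: the two generating sets you compare at the end are not literally equal---the set for $P_{F_1\times\{0\}}$ consists of all $\chi^{(m_1,m_2)} = \chi^{(m_1,0)}\chi^{(0,m_2)}$ with $\langle m_1, v_{F_1}\rangle_1 > 0$---so you still need the observation (which the paper makes explicit) that $(0,m_2)\in C^\vee\cap M$, hence $\chi^{(0,m_2)}\in R$, so each such monomial is an $R$-multiple of a generator $\chi^{(m_1,0)}$ of $P_{F_1}R$.
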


\begin{proof}
Induce on $n$ with base case $n=2$; we focus on the base case for the remainder of the proof. Suppose $R_i = (R_i)_\F = \F [C_i^\vee \cap M_i]$, and  $$R = R_\F = \F [C^\vee \cap M] \cong R_1 \otimes_\F R_2.$$ 
Any monomial prime in $R$ corresponds bijectively with a face of $C$. All faces of $C$ are of the form $F = F_1 \times F_2$ where $F_i$ is a face of $C_i$. Given $F$ as stated, with $Q_F \subseteq R$ the corresponding monomial prime, the base case follows from proving that 
\begin{enumerate}
\item[\textbf{(1)}] $Q_{F_1 \times F_2} = Q_{F_1 \times \{0\}} + Q_{\{0\} \times F_2} $; and 
\item[\textbf{(2)}] As expansions of monomial ideals, $Q_{F_1 \times \{0\}} = P_{F_1} R$,  
$Q_{\{0\} \times F_2} = P_{F_2} R$.
\end{enumerate}

The \textbf{Minkowski sum-ideal sum decomposition}  \eqref{eqn: Minkowski sum-ideal sum decomp 001} suffices to verify both claims. First, to  see  \textbf{(1)}, notice $F_1 \times F_2  = (F_1 \times \{0\}) + (\{0\} \times F_2)$ as a Minkowski sum of faces. 
As for \textbf{(2)}, \eqref{eqn: Minkowski sum-ideal sum decomp 001} allows us to reduce verification to the case where the $F_i$ are rays. We do so explicitly for $Q_{\rho \times \{0\}}$ where $\rho$ is a ray of $C_1$. 
We will use notations $\chi^a, \phi^b, \psi^c$  for characters in $R, R_1, R_2$ respectively. 
We express an arbitrary 
\begin{equation}\label{eqn:semigroup-rewrite}
w = (w_1, w_2) \in C^\vee \cap M  = (C_1^\vee \cap M_1) \times (C_2^\vee \cap M_2), 
\end{equation} where $w_i \in C_i^\vee \cap M_i$. 
For $w$ as in \eqref{eqn:semigroup-rewrite}, the three characters $\chi^w, \chi^{(w_1, 0)} = \phi^{w_1}, \chi^{(0,w_2)} = \psi^{w_2}$ all lie in $R$. Indeed, given any $v = (v_1, v_2) \in C$ with $v_i \in C_i$, and $w$ as in \eqref{eqn:semigroup-rewrite}, 
all dot product terms below are \textit{nonnegative}: deferring to Remark \ref{rem:compatibilityofthreepairings}, 
\begin{align*}
\langle w, v \rangle &= \langle w_1, v_1 \rangle + \langle w_2 , v_2 \rangle \\
\langle (w_1, 0), v \rangle & =  \langle w_1, v_1 \rangle  \ge 0 , \quad 
\langle (0, w_2), v \rangle = \langle w_2 , v_2 \rangle \ge 0 .
\end{align*}    
In particular, since $v \in C$ was arbitrary both  $(w_1, 0)$ and $(0, w_2)$ lie in $C^\vee \cap M$. 

Now suppose $\chi^{w} = \chi^{(w_1,0)} \chi^{(0,w_2)} = \phi^{w_1} \psi^{w_2} \in Q_{\rho \times \{0\}}$, i.e., $\langle w, v \rangle > 0$ for some vector $v = (v_1, v_2) \in \rho \times \{0\}$. Since $v_2 = 0$ here, equivalently $\langle w, v \rangle = \langle w_1, v_1 \rangle > 0$ for some $v_1 \in \rho$, i.e., the character 
$\chi^{(w_1,0)} = \phi^{w_1} \in P_{\rho} R$. Since $\chi^{(0, w_2)} = \psi^{w_2} \in R$, $\chi^{w} = \phi^{w_1} \psi^{w_2} \in  P_{\rho} R$. Thus $Q_{\rho \times \{0\}} \subseteq P_{\rho} R$. For the other inclusion: the characters $\chi^{(w_1,0)} = \phi^{w_1}$ as above generate $P_{\rho} R$, and each such generator lies in $Q_{\rho \times \{0\}}$ since we already indicated above that $\chi^w \in Q_{\rho \times \{0\}}$ if and only if $\chi^{(w_1,0)} = \phi^{w_1} \in P_{\rho} R$. \qed 
\end{proof}

\subsection{Hilbert Bases, Non-Full Cones, and Toric Divisor Theory}\label{subsub:Hilb00} 
First, suppose the pointed cone $C$ from Remark \ref{rem:strong-convexity-stipulation} is full.  Then there is a uniquely-determined minimal generating set $\mathcal{B}$ for $C^\vee \cap M$, in the sense that any other generating set contains $\mathcal{B}$. The set $\mathcal{B}$ is called the \textbf{Hilbert basis} of the semigroup, and consists of the \textbf{irreducible} vectors $m \in C^\vee \cap M - \{0\}$; a vector $v \in C^\vee \cap M$ is irreducible if it cannot be expressed as a sum of two vectors $m \in C^\vee \cap M - \{0\}$. 
See \cite[Prop.~1.2.17]{torictome} and \cite[Prop.~1.2.23]{torictome} for details.

 In case the pointed cone $C$ is not full, the next proposition is handy. 
\begin{prop}\label{prop: proof reduction}
Let $N'_\R$ be the $\R$-span of a pointed cone $C \subseteq N_\R$. Set $N' = N'_\R \cap N$, and consider $C$ as a full-dimensional cone in $N'_\R$ $($relabeled as $C' )$. Let $M' = \operatorname{Hom}_\Z (N', \Z)$ be the dual lattice. Then working over an arbitrary ground field $\F$, the toric ring $R_\F := \F [C^\vee \cap M]$ is isomorphic to $R_\F' \otimes_\F L$ where the toric ring $R_\F':= \F [(C')^\vee \cap M']$ and $L$ is a Laurent polynomial ring over $\F$. In particular, there is a bijective correspondence between the monomial primes of $R'_\F$ and $R_\F$ given by expansion and contraction of ideals along the faithfully flat ring map $\varphi \colon R'_\F \hookrightarrow R_\F' \otimes L = R_\F$. Moreover, the divisor class groups of $R_\F$ and $R_\F'$ are isomorphic. 
\end{prop}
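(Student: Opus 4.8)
\textbf{Proof strategy for Proposition \ref{prop: proof reduction}.}
The plan is to exhibit a splitting of the ambient lattice $M$ that makes the semigroup $C^\vee \cap M$ visibly a direct product of the semigroup attached to $C'$ with a free abelian group, and then translate each assertion through this decomposition. First I would let $M''$ be a complement to $M'$ inside $M$: since $N'_\R$ is a rational subspace, $M' = M \cap \{\R\text{-span of } C^\vee\}$ is a saturated (pure) sublattice of $M$, so $M \cong M' \oplus M''$ for some free abelian group $M''$ of rank $\dim(N_\R) - \dim(N'_\R)$. The key geometric observation — already used in the Remark after Remark \ref{rem:strong-convexity-stipulation} — is that $C^\vee \cap M = C^\vee \cap M'$, because $C^\vee$ lies entirely in the $\R$-span of $M'$; hence every element of $C^\vee \cap M$ has trivial $M''$-component. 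Consequently the monoid $C^\vee \cap M$, which is really $(C')^\vee \cap M'$, and the group $M''$ together generate a monoid isomorphic to $((C')^\vee \cap M') \oplus M''$ inside $M' \oplus M''$. Wait, that is not quite the monoid we want: we must check that taking the toric ring of $C$ inside the lattice $M$ produces exactly $\F[(C')^\vee \cap M'] \otimes_\F \F[M'']$; but this is immediate once one recalls that $R_\F = \F[C^\vee \cap M]$ is defined via the original full dual description only after the reduction of the cited Remark, and the monomials indexed by $M''$ are precisely the ``extra'' torus directions. So $R_\F \cong R_\F' \otimes_\F L$ with $L = \F[M''] \cong \F[t_1^{\pm 1}, \ldots, t_k^{\pm 1}]$ a Laurent polynomial ring, and the map $\varphi$ is the inclusion of the first tensor factor, which is faithfully flat since $L$ is a free $\F$-module (hence $R_\F$ is free, in particular faithfully flat, over $R_\F'$).

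Granting the isomorphism $R_\F \cong R_\F' \otimes_\F L$, the remaining claims follow from standard facts. For the bijection of monomial primes: by Lemma \ref{lem: monomial primes as sums of extended ideals 001} applied to the two-factor product $R_\F' \otimes_\F L$ — noting that $L$, being a Laurent polynomial ring, is the toric ring of the zero cone and so has only the zero monomial prime — every monomial prime $Q$ of $R_\F$ has the form $P R_\F$ for a unique monomial prime $P$ of $R_\F'$; conversely $P R_\F \cap R_\F' = P$ holds by faithful flatness (or directly, since $R_\F' \to R_\F' \otimes_\F L$ is a split injection of $R_\F'$-modules). This gives the claimed correspondence by expansion and contraction along $\varphi$. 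For the class groups: $\operatorname{Cl}(R_\F') \cong \operatorname{Cl}(R_\F' \otimes_\F L) = \operatorname{Cl}(R_\F)$ is exactly Theorem \ref{thm:classgroupsunderpolynomialextensions}, the right-hand isomorphism there handling adjunction of finitely many Laurent variables (apply it with $A = R_\F'$, which is a Noetherian normal domain by \cite[Thm.~1.3.5]{torictome}).

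I expect the main obstacle to be bookkeeping rather than mathematics: namely, verifying carefully that the abstract tensor decomposition $R_\F \cong R_\F' \otimes_\F L$ is compatible with the $M$-grading and with the identification $C^\vee \cap M = (C')^\vee \cap M'$, so that ``monomial prime'' means the same thing on both sides and the Minkowski-sum/ideal-sum machinery of Lemma \ref{lem: monomial primes as sums of extended ideals 001} genuinely applies. One must also be a little careful that $M'$ is saturated in $M$ (so that the quotient $M/M'$ is free and a splitting exists); this is where rationality of the cone $C$ enters, ensuring $N'_\R \cap N$ is a direct summand of $N$ and dually $M'$ is a direct summand of $M$. Once the decomposition is pinned down precisely, every assertion in the proposition is a one-line consequence of a result already recorded above.
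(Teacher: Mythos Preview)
Your handling of the second and third assertions matches the paper exactly: the paper invokes Lemma~\ref{lem: monomial primes as sums of extended ideals 001} (noting a Laurent polynomial ring has no nonzero monomial primes) for the bijection of monomial primes, and Theorem~\ref{thm:classgroupsunderpolynomialextensions} for the class group isomorphism.

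However, your derivation of the tensor decomposition $R_\F \cong R'_\F \otimes_\F L$ contains a genuine confusion: you have mixed up the reduction of Remark~\ref{rem:strong-convexity-stipulation} (which handles $C$ \emph{not pointed}, equivalently $C^\vee$ not full) with the reduction needed here ($C$ pointed but \emph{not full}, equivalently $C^\vee$ full but not pointed). In the setting of Proposition~\ref{prop: proof reduction}, $C$ is pointed, so $C^\vee$ is full in $M_\R$; hence your claims that ``$C^\vee$ lies entirely in the $\R$-span of $M'$'' and that ``every element of $C^\vee \cap M$ has trivial $M''$-component'' are simply false. Relatedly, $M' = \operatorname{Hom}_\Z(N',\Z)$ is naturally a \emph{quotient} of $M$ (dual to $N' \hookrightarrow N$), not the sublattice $M \cap \{\R\text{-span of }C^\vee\} = M$ that you wrote. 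The correct picture is the opposite one: set $M'' = (N')^\perp \cap M$; since $N'$ is saturated in $N$, $M''$ is saturated in $M$ with $M/M'' \cong M'$, so a splitting $M \cong M' \oplus M''$ exists. Because $C$ spans $N'_\R$, the subspace $M''_\R$ is exactly the lineality space of $C^\vee$, and $C^\vee = (C')^\vee \times M''_\R$ under this splitting. Hence $C^\vee \cap M = ((C')^\vee \cap M') \times M''$ --- the semigroup is \emph{strictly larger} than $(C')^\vee \cap M'$, with the extra $M''$ factor supplying the Laurent variables --- and taking semigroup rings gives $R_\F \cong R'_\F \otimes_\F \F[M'']$. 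The paper does not spell this out, citing instead \cite[Proof of Prop.~3.3.9]{torictome}; if you wish to include the argument, this is the shape it must take.
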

\begin{proof}
While Cox-Little-Schenck \cite[Proof of Prop.~3.3.9]{torictome} yields the first assertion,  Lemma \ref{lem: monomial primes as sums of extended ideals 001} yields the second since a Laurent polynomial ring has no nonzero monomial primes. As for the class group assertion, $R_\F$ is a Laurent polynomial ring over $R_\F'$ after base change, so apply Theorem \ref{thm:classgroupsunderpolynomialextensions}. \qed 
\end{proof}


We now recall how to compute divisor class groups up to isomorphism when working over algebraically closed fields. 
Working over an algebraically closed field $\F$, fix a pointed cone $C$ as in Remark \ref{rem:strong-convexity-stipulation} and the pair of rings $R_\F$ and $R'_\F$ as in Proposition \ref{prop: proof reduction}. 
When $C \neq \{0\}$, each $\rho \in \Sigma(1)$, the collection of \textbf{rational rays (one-dimensional faces)} of $C$,  yields a unique primitive generator $u_\rho \in \rho \cap N$ for $C$ and a torus-invariant height one prime ideal $P_\rho$ in $R'_\F$; cf.,  \cite[Thm.~3.2.6]{torictome}.  
The torus-invariant height one primes generate a free abelian group $\bigoplus_{\rho \in \Sigma(1)} \Z  P_\rho$ which maps surjectively onto the divisor class group of $R'_\F$.  More precisely, we record the following well-known theorem; see \cite[Ch.~4]{torictome}.
\begin{theorem}\label{thm: exact sequence 00}
\textit{
With notation as in Proposition \ref{prop: proof reduction}, let $C \subseteq N_\R$ be a pointed cone with primitive  generators $\Sigma(1)$ as described above. 
Then there is a short exact sequence of abelian groups 
\begin{equation}\label{eqn:SES001}
0 \to M' \stackrel{\phi}{\to} \bigoplus_{\rho \in \Sigma(1)} \Z  P_\rho \to \operatorname{Cl}(R'_\F) \to 0,
\end{equation} 
where $\phi (m) = \operatorname{div}(\chi^m)=  \sum_{\rho \in \Sigma(1)}  \langle m , u_\rho\rangle  P_\rho$. 
Furthermore, $\operatorname{Cl}(R_\F)$ and $\operatorname{Cl}(R'_\F)$ are isomorphic,  $\operatorname{Cl}(R_\F)$ is finite abelian if and only if $C$ is simplicial, and trivial if and only if  $C$ is smooth. 
}
\end{theorem}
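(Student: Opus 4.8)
The plan is to prove Theorem \ref{thm: exact sequence 00} by assembling standard facts from toric geometry, reducing all the work to the full-dimensional cone $C'$ and then citing the structure theory recalled in the excerpt. First I would dispense with the non-full case: Proposition \ref{prop: proof reduction} already gives $\operatorname{Cl}(R_\F) \cong \operatorname{Cl}(R'_\F)$, and the rays $\Sigma(1)$ of $C$ in $N_\R$ correspond bijectively to the rays of $C'$ in $N'_\R$ (the $\R$-span of $C$), with matching primitive generators $u_\rho \in \rho \cap N = \rho \cap N'$; so it suffices to establish the exact sequence \eqref{eqn:SES001} for the full-dimensional cone $C'$ with dual lattice $M'$. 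From here on I work with $C'$, $R'_\F$, $M'$, $N'$ and write things without primes for readability.

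Next I would construct the sequence \eqref{eqn:SES001} map by map. The middle term is the group $\operatorname{Div}_T(R'_\F) = \bigoplus_{\rho \in \Sigma(1)} \Z P_\rho$ of torus-invariant Weil divisors; the surjection onto $\operatorname{Cl}(R'_\F)$ holds because every Weil divisor class on a normal toric variety has a torus-invariant representative — concretely, a height one prime $\mathfrak{p}$ of $R'_\F$ not among the $P_\rho$ meets the torus, hence is principal in the localization at the torus, and by Theorem \ref{thm:classgroupsunderlocalization}(2) (the torus being $R'_\F$ localized at the product of the monomial prime-to-torus elements, which are prime since $R'_\F$ is a UFD after inverting all the $\chi^{u}$... more cleanly: the torus $T = \F[M']$ is a Laurent polynomial ring, so $\operatorname{Cl}(T) = 0$ by Theorem \ref{thm:classgroupsunderpolynomialextensions}, and by Theorem \ref{thm:classgroupsunderlocalization}(1) the map $\operatorname{Cl}(R'_\F) \to \operatorname{Cl}(T) = 0$ has kernel generated by the classes of the height one primes meeting the complement, which are exactly the $P_\rho$). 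The map $\phi \colon M' \to \operatorname{Div}_T(R'_\F)$ sends $m \mapsto \operatorname{div}(\chi^m)$; that $\operatorname{div}(\chi^m) = \sum_\rho \langle m, u_\rho \rangle P_\rho$ follows from computing the valuation $\nu_{P_\rho}(\chi^m)$, which one checks in the DVR $(R'_\F)_{P_\rho}$ equals $\langle m, u_\rho\rangle$ since $u_\rho$ is primitive. The image of $\phi$ consists precisely of the principal torus-invariant divisors, so exactness at $\operatorname{Div}_T$ is the statement that a torus-invariant Weil divisor is principal iff it is $\operatorname{div}(\chi^m)$ for some $m$ — the nontrivial direction being that a global unit on the toric variety is, up to scalar, a character, which holds because $\F[C'^\vee \cap M']^\times$ consists of scalar multiples of the characters $\chi^m$ with $m$ in the largest subgroup of $M'$ contained in $C'^\vee$, which is $\{0\}$ exactly when $C'$ is full, i.e. $C$ is pointed with $C^\vee$ full — giving injectivity of $\phi$. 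This last point, verifying injectivity of $\phi$ via the unit-group computation, is the step I expect to be the main (if modest) obstacle, since it is where the pointedness/full-dimensionality hypotheses are actually consumed; the rest is bookkeeping.

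Finally I would address the two classification statements. The isomorphism $\operatorname{Cl}(R_\F) \cong \operatorname{Cl}(R'_\F)$ is already in Proposition \ref{prop: proof reduction}. For finiteness: $\operatorname{Cl}(R'_\F)$ is the cokernel of $\phi$, a map of free abelian groups $\Z^{\dim N'} \to \Z^{|\Sigma(1)|}$; this cokernel is finite iff $\phi \otimes \Q$ is surjective iff $|\Sigma(1)| = \dim N'$ and the $u_\rho$ are $\Q$-linearly independent, which by definition says $C'$ (equivalently $C$) is simplicial. For triviality: the cokernel is zero iff $\phi$ is an isomorphism onto $\operatorname{Div}_T$, which forces $|\Sigma(1)| = \dim N'$ and the matrix $(\langle m_j, u_\rho\rangle)$ — equivalently, the $u_\rho$ — to be a $\Z$-basis of $N'$; a simplicial cone whose primitive ray generators form a lattice basis is by definition smooth (unimodular), and conversely. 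I would phrase these two via the elementary divisor / Smith normal form description of $\operatorname{coker}(\phi)$ and cite \cite[Ch.~4]{torictome} for the standard dictionary, keeping the argument to a few lines.
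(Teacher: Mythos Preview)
Your proposal is correct and follows the standard toric-geometry route; the paper itself gives no proof beyond a remark citing Cox--Little--Schenck \cite{torictome} (Prop.~3.3.9, Prop.~4.1.1--4.1.2, Thm.~4.1.3, Exer.~4.1.1--4.1.2, Prop.~4.2.2, 4.2.6, 4.2.7), and your sketch is essentially an outline of those cited results. One small untangling worth making when you write it up: the computation $\F[C'^\vee \cap M']^\times = \F^\times$ (which uses that $C'^\vee$ is pointed, i.e.\ $C'$ is full) is what yields injectivity of $\phi$, whereas exactness at the middle term instead uses that units of the \emph{torus} $\F[M']$ are scalar multiples of characters---a fact needing no pointedness hypothesis---so that any $f$ with $\operatorname{div}(f)$ torus-invariant agrees with some $c\,\chi^m$ on the torus and hence $\operatorname{div}(f) = \phi(m)$.
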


\begin{remark*}
This above result follows from \cite[Prop.~3.3.9, Prop.~4.1.1-4.1.2, Thm.~4.1.3, Exer.~4.1.1-4.1.2, Prop.~4.2.2, Prop.~4.2.6, and Prop.~4.2.7]{torictome}, essentially consolidating what facts we need to bear in mind going forward in the manuscript. 
\end{remark*}

\begin{definition}\label{defn:simplicalorsmoothcone}  The cone $C  \subseteq N_\R$ is \textbf{simplicial $($respectively, smooth$)$} if $C = \{0\}$ or the primitive ray generators form part of  an $\R$-basis for $N_\R$ $($resp., a $\Z$-basis for $N)$. We also apply the adjectives simplicial and smooth to the corresponding toric algebra $R_\F$ and the toric $\F$-variety $\operatorname{Spec}(R_\F)$.
\end{definition}

\begin{remark*}
In algebro-geometric language, if $C$ as in Theorem \ref{thm: exact sequence 00} is simplicial, then all Weil divisors on $\operatorname{Spec}(R_\F)$ are $\Q$-Cartier of index at most the order of $\operatorname{Cl}(R_\F)$.   
\end{remark*}

The next lemma says we can reduce all toric divisor class group computations to the case where $\F$ is algebraically closed, to leverage Theorem \ref{thm: exact sequence 00}. 
\begin{lemma}\label{lem:base-change-irrelevant}
\textit{With notation as in Proposition \ref{prop: proof reduction}, the divisor class groups 
$\operatorname{Cl}(R_\F) \cong \operatorname{Cl}(R_{\overline{\F}})$ are isomorphic.}
\end{lemma}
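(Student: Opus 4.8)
The plan is to reduce the statement $\operatorname{Cl}(R_\F) \cong \operatorname{Cl}(R_{\overline{\F}})$ to a form where the graded base-change result Theorem \ref{thm:classgroupsgradedbasechange} applies, and then to upgrade the injection it provides to an isomorphism by exhibiting a compatible surjection. First, by Proposition \ref{prop: proof reduction} we may replace $R_\F$ by $R'_\F = \F[(C')^\vee \cap M']$, i.e., we may assume the defining cone $C$ is full-dimensional and pointed; the class group is unchanged, and this holds verbatim over $\overline{\F}$ as well, so it suffices to treat the full-dimensional pointed case. In that case $C^\vee$ is also pointed, so $R'_\F$ carries an $\N$-grading with $A_0 = \F$ and a homogeneous maximal ideal $\mathfrak{m}$, exactly the hypothesis of Theorem \ref{thm:classgroupsgradedbasechange}.

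Next I would apply Theorem \ref{thm:classgroupsgradedbasechange} with $\F' = \overline{\F}$. One must check that $A' := R'_\F \otimes_\F \overline{\F}$ is again a Noetherian normal domain: it is the toric ring $\overline{\F}[(C')^\vee \cap M']$ built from the same lattice data, which is a normal domain of finite type by \cite[Thm.~1.3.5]{torictome}, and the canonical isomorphism $R'_\F \otimes_\F \overline{\F} \cong R'_{\overline{\F}}$ identifies it with the toric ring over $\overline{\F}$. Theorem \ref{thm:classgroupsgradedbasechange} then gives that $R'_{\overline{\F}}$ is faithfully flat over $R'_\F$ and the induced map $\operatorname{Cl}(R'_\F) \to \operatorname{Cl}(R'_{\overline{\F}})$ is injective.

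It remains to see this map is surjective, and here is where the main work lies. The point is that both class groups are computed by the same presentation \eqref{eqn:SES001} of Theorem \ref{thm: exact sequence 00}: the rays $\Sigma(1)$, their primitive generators $u_\rho \in N'$, the free group $\bigoplus_{\rho} \Z P_\rho$, and the map $\phi(m) = \sum_\rho \langle m, u_\rho \rangle P_\rho$ depend only on the lattice $N'$ and the cone $C'$, not on the field. Concretely, the torus-invariant height one prime $P_\rho$ of $R'_\F$ expands to the corresponding $P_\rho$ of $R'_{\overline{\F}}$ under base change (it is the monomial prime attached to the ray $\rho$, generated by the same monomials), so the natural map sends the class $[P_\rho]$ to the class $[P_\rho]$. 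Since the classes $[P_\rho]$ generate $\operatorname{Cl}(R'_{\overline{\F}})$ by Theorem \ref{thm: exact sequence 00}, the base-change map is surjective, hence an isomorphism; chasing back through Proposition \ref{prop: proof reduction} gives $\operatorname{Cl}(R_\F) \cong \operatorname{Cl}(R_{\overline{\F}})$.

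The main obstacle I anticipate is the bookkeeping needed to justify that base change is compatible with the presentation \eqref{eqn:SES001} — that is, that $P_\rho \otimes_\F \overline{\F}$ really is the torus-invariant prime of $R'_{\overline{\F}}$ associated to the same ray (so that the diagram relating the two copies of \eqref{eqn:SES001} commutes), and that the identification $R'_\F \otimes_\F \overline{\F} \cong R'_{\overline{\F}}$ of toric rings is canonical enough to respect all this structure. Strictly speaking one should also handle the degenerate case $C = \{0\}$ (where $R_\F$ is a Laurent polynomial ring and both class groups are trivial) separately or note it is covered by the Laurent-ring factor in Proposition \ref{prop: proof reduction}. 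Everything else is formal: faithful flatness and injectivity come free from Theorem \ref{thm:classgroupsgradedbasechange}, and surjectivity is immediate once the generators are matched up.
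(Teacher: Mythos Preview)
Your proposal is correct and follows essentially the same route as the paper: reduce to the full-dimensional pointed case via Proposition~\ref{prop: proof reduction}, invoke Theorem~\ref{thm:classgroupsgradedbasechange} for injectivity of $\operatorname{Cl}(R_\F) \to \operatorname{Cl}(R_{\overline{\F}})$, and obtain surjectivity because the height-one monomial primes $[P_\rho]$ lie in the image and generate $\operatorname{Cl}(R_{\overline{\F}})$ by Theorem~\ref{thm: exact sequence 00}. Your write-up is more explicit about the compatibility bookkeeping and the degenerate case $C=\{0\}$, but the underlying argument is identical.
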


\begin{proof} By now it is clear we can reduce to the case where $C$ is a full pointed cone in $N_\RR$. The algebra $R_\F$ admits an $\N$-grading with its zeroth graded piece being $\F$; see the passage above Remark \ref{rem:strong-convexity-stipulation}. We may then cite Theorem \ref{thm:classgroupsgradedbasechange} 
to conclude that up to isomorphism, $\operatorname{Cl}(R_\F) \subseteq \operatorname{Cl}(R_{\overline{\F}})$ as a subgroup. This improves to an equality for normal toric rings because the divisor classes of height one monomial primes belong to both groups and generate the latter by Theorem \ref{thm: exact sequence 00}. \qed 
\end{proof}

\begin{remark*}
Citing 
Lemma \ref{lem:base-change-irrelevant}, we observe in passing that the two toric algebra results deduced in the IJM paper \cite[Thm.~1.3, Cor.~3.2]{walker2016} extend to the case where we are working 
over arbitrary fields which need not be algebraically closed.   
\end{remark*}

\section{The Proof of Theorem 1.2: New Examples from Old}\label{section: Main Event}

Theorem \ref{thm: finite tensor products 000} is an immediate corollary, indeed a uniform bound analogue, of the following 
\begin{theorem}\label{thm: finite tensor products 001}
For $n \ge 2$, let $R_1, \ldots, R_n$ be normal toric rings over a field $\F$, $P_i \subseteq R_i$ monomial primes with $1 \le  i \le n$. For each $1 \le i \le n$,  suppose there is an integer $D_i>0$ such that $P_i^{(D_i (r-1) + 1)} \subseteq P_i^r$ 
 for all $r> 0$. Set $D = \max\{D_1 , \ldots, D_n\}$.  Then $Q^{(D (r-1) + 1)} \subseteq Q^r$ for all $r>0$, where the monomial prime $Q = \sum_{i = 1}^n (P_i R) \subseteq R \cong R_1 \otimes_\F \cdots \otimes_\F R_n$. 
\end{theorem}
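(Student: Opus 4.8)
The plan is to reduce Theorem~\ref{thm: finite tensor products 001} to the case $n=2$ by induction (associativity of $\otimes_\F$ for normal toric rings, together with Lemma~\ref{lem: monomial primes as sums of extended ideals 001}, lets us collapse $R_1 \otimes_\F \cdots \otimes_\F R_n$ into a two-fold tensor product whose factors already satisfy Harbourne-Huneke bounds with a multiplier $\le D$), and then to attack $n=2$ directly. So suppose $R = R_1 \otimes_\F R_2$, $Q = P_1 R + P_2 R$, and $D = \max\{D_1, D_2\}$. The first step is to upgrade the hypothesis: by Lemma~\ref{thm: du Val bound 1}-style reasoning (or more precisely, since $P_i^{(D_i(r-1)+1)} \subseteq P_i^r$ self-improves to $P_i^{(D(r-1)+1)} \subseteq P_i^r$ because $D(r-1)+1 \ge D_i(r-1)+1$ and symbolic powers are nested), we may assume $D_1 = D_2 = D$.

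The core of the argument is a comparison of symbolic powers of $Q$ with products of symbolic powers of $P_1 R$ and $P_2 R$. The key claim I would isolate and prove is a ``symbolic Künneth'' statement: for all $a \ge 0$,
\begin{equation*}
Q^{(a)} = \sum_{b + c = a} (P_1 R)^{(b)} (P_2 R)^{(c)},
\end{equation*}
or at least the one containment $Q^{(Da - D + 1)} \subseteq \sum_{b+c \,=\, a-1}(P_1R)^{(Db+1)}(P_2R)^{(Dc+1)} + \dots$ that is actually needed. To get at this I would first invoke Proposition~\ref{prop: faithful flatness criterion 001}: since $R_1 \to R$ and $R_2 \to R$ are faithfully flat and $P_i \in \mathcal{I}(R_i)$ (the extended ideal $P_i R$ is again a monomial prime, by Lemma~\ref{lem: monomial primes as sums of extended ideals 001}), we get $(P_i R)^{(N)} = P_i^{(N)} R$ for all $N$; this transports the hypothesis to $(P_i R)^{(D(r-1)+1)} \subseteq (P_i R)^r$ inside $R$. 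Then, working with the $M$-grading on $R$ and the fact that $Q$, $P_1R$, $P_2R$ and all their symbolic powers are monomial ideals, I would compute $Q^{(a)}$ combinatorially: a monomial $\chi^w$ lies in $Q^{(a)}$ iff its valuation along each height-one monomial prime over $Q$ meets the prescribed bound, and because $C = C_1 \times C_2$ splits the ray set of $C$ into rays ``from $C_1$'' and rays ``from $C_2$,'' the valuation conditions decouple into a condition on the $M_1$-part and a condition on the $M_2$-part of $w$. This is exactly what the adapted Proposition~\ref{prop: faithful flatness criterion 002} and Theorem~\ref{thm: exact sequence 00} are there to furnish.

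Granting the decoupling, the endgame is a counting argument. We want $Q^{(D(r-1)+1)} \subseteq Q^r$. Write $D(r-1)+1$ and distribute: a monomial in $Q^{(D(r-1)+1)}$ decomposes (by the decoupling) as a product $\chi^{w_1}\chi^{w_2}$ with $\chi^{w_i}$ contributing some symbolic power $(P_i R)^{(a_i)}$, where the exponents $a_1, a_2$ satisfy a constraint forcing $a_1 + a_2 \ge D(r-1)+1$ after accounting for how the ray valuations of $Q$ split. Writing $a_i = D(r_i - 1) + s_i$ with $0 \le s_i < D$ (Euclidean division), the hypothesis gives $\chi^{w_i} \in (P_iR)^{(D(r_i-1)+1)} \subseteq (P_iR)^{r_i}$ when $s_i \ge 1$, and a parallel bound when $s_i = 0$; one then checks $r_1 + r_2 \ge r$, so that $\chi^{w_1}\chi^{w_2} \in (P_1R)^{r_1}(P_2R)^{r_2} \subseteq Q^{r_1 + r_2} \subseteq Q^r$. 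I expect the main obstacle to be precisely this bookkeeping: establishing the decoupling of the symbolic power (i.e.\ that $Q^{(a)}$ really is the ``convolution'' $\sum_{b+c=a}(P_1R)^{(b)}(P_2R)^{(c)}$, which requires knowing the height-one primes over $Q$ and the Minkowski sum–ideal sum structure of Proposition~\ref{prop:MinksumIdealsum} interact correctly with valuations), and then running the Euclidean-division rounding so that the floors add up in the right direction — off-by-one slack in the $N(r-1)+1$ form is exactly where such arguments tend to break, so I would be careful that the ``$+1$'' survives the splitting rather than degrading to ``$+2$.''
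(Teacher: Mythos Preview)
Your overall architecture matches the paper's: establish a ``decoupling'' containment
\[
Q^{(N)} \subseteq \sum_{A_1 + \cdots + A_n = N} \prod_{i=1}^n (P_i R)^{(A_i)},
\]
transport the hypothesis to the $(P_iR)^{(A_i)}$ via faithful flatness (Proposition~\ref{prop: faithful flatness criterion 002}), and then finish with ceiling superadditivity $\sum_i \lceil A_i/D\rceil \ge \lceil N/D\rceil$. Your Euclidean-division bookkeeping at the end is equivalent to the paper's use of Lemma~\ref{lem: equivalence of symb power containments 001}, and your worry about the ``$+1$'' is well placed but harmless here. The paper works directly with general $n$ rather than inducting down to $n=2$; your reduction is fine but not needed.

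The genuine gap is in how you propose to prove the decoupling. You write that ``a monomial $\chi^w$ lies in $Q^{(a)}$ iff its valuation along each height-one monomial prime over $Q$ meets the prescribed bound,'' and then appeal to the splitting of rays of $C = C_1 \times C_2$. This is not how symbolic powers of a prime of height $>1$ are computed: $Q^{(a)}$ is the contraction of $Q^a R_Q$, not an intersection of height-one primary components. (Already for $Q=(x,y)$ in $\F[x,y]$ the height-one monomial valuations at $(x)$ and $(y)$ do not individually control membership in $Q^a$; and the reference to Theorem~\ref{thm: exact sequence 00} is a red herring, since that exact sequence concerns the class group, not symbolic powers of higher-height primes.) So the mechanism you invoke does not furnish the containment you need.

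What the paper does instead is use the saturation description from Lemma~\ref{lem: symb powers as saturations 001}: choose the single ``complement'' monomial $\mathcal{M}$ built from all Hilbert-basis generators \emph{not} among the minimal generators of any $P_iR$, so that $Q^{(N)} = Q^N :_R \mathcal{M}^\infty$. Given a monomial $g = \prod_i m_i \in Q^{(N)}$ (with $m_i$ supported in the $R_i$-variables), one has $g\mathcal{M}^T \in Q^N$ for $T\gg 0$, hence $g\mathcal{M}^T \in \prod_i (P_iR)^{A_i}$ for some tuple with $\sum A_i = N$; then for each fixed $j$ one observes that the factor of $g\mathcal{M}^T$ complementary to $m_j$ is itself a legitimate saturating element for $P_jR$, forcing $m_j \in (P_jR)^{(A_j)}$. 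That is the missing idea: the decoupling comes from saturation against a product-form complement monomial, not from height-one valuation theory.
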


Prior to giving the proof, we will state two preliminary lemmas, proving the latter lemma. 

\begin{lemma}[{\cite[Ch.~3]{5authorSymbolicSurvey}}]\label{lem: symb powers as saturations 001}
For any prime ideal $P$ in a Noetherian ring $S$, and $N \in \Z_{\ge 0 }$,  $$P^{(N)} = P^N :_S (s)^\infty = \bigcup_{j \ge 0} (P^N :_S (s^j)) = P^N :_S (s^T)$$ for all $T \gg 0$ and any $s \not\in P$ belonging to all embedded primes of $P^N$.
\end{lemma}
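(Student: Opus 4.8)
The statement to prove is Lemma~\ref{lem: symb powers as saturations 001}, which asserts that for a prime $P$ in a Noetherian ring $S$, the $N$-th symbolic power equals the saturation $P^N :_S (s)^\infty$ for any $s \notin P$ lying in every embedded prime of $P^N$, and that this saturation stabilizes, i.e. equals $P^N :_S (s^T)$ for $T \gg 0$.

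The plan is to work directly from the definition of symbolic power given at the start of Section~\ref{section: Mars Walker Lemma}. Since $P$ is prime, $\operatorname{Ass}_S(S/P) = \{P\}$ is not quite the relevant set; rather one needs $\operatorname{Ass}_S(S/P^N)$. The minimal primes of $P^N$ consist of $P$ alone (as $\sqrt{P^N} = P$), while the remaining associated primes are the embedded ones, each of which contains $P$. So with $W = S - \bigcup\{\mathfrak p : \mathfrak p \in \operatorname{Ass}_S(S/P^N)\}$, the definition gives $P^{(N)} = P^N W^{-1}S \cap S = \{f \in S : tf \in P^N \text{ for some } t \in W\}$. First I would observe that $W \subseteq S - P$ automatically, and that the chosen element $s$ lies in $W$ precisely because it avoids $P$ (the unique minimal prime) and lies in each embedded prime — wait, that is backwards: $s$ must \emph{avoid} all associated primes to be in $W$. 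Re-examining: $s \notin P$ handles the minimal prime; for the embedded primes, one uses prime avoidance to note that since each embedded prime strictly contains $P$, and the saturation with respect to $s$ only needs $s$ to kill the embedded components. The cleaner route is: $P^N = \bigcap_i Q_i$ a (possibly redundant) primary decomposition with $Q_0$ the $P$-primary component and $Q_1,\dots,Q_k$ the embedded ones with $\sqrt{Q_j} = \mathfrak p_j \supsetneq P$. Then $P^{(N)} = Q_0$ (the $P$-primary component), and I would show $Q_0 = P^N :_S (s)^\infty$ by checking both inclusions: if $s^j f \in P^N \subseteq Q_0$ then since $s \notin \sqrt{Q_0} = P$, primariness forces $f \in Q_0$; conversely if $f \in Q_0$, then for each embedded $Q_j$ we need a power of $s$ with $s^{N_j} f \in Q_j$, which holds because $s \in \mathfrak p_j = \sqrt{Q_j}$ so some power $s^{N_j} \in Q_j$, hence $s^{N_j} f \in Q_j$; taking $T = \max_j N_j$ gives $s^T f \in \bigcap_j Q_j$ over embedded indices, and combined with $f \in Q_0$ yields $s^T f \in P^N$.

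The stabilization claim then follows for free: the ascending chain $P^N :_S (s) \subseteq P^N :_S (s^2) \subseteq \cdots$ stabilizes by Noetherianity, and the uniform bound $T$ above shows it stabilizes at $s^T$ for any $T$ at least the maximum of the exponents needed to push the embedded primary components, so $P^{(N)} = P^N :_S (s^T) = \bigcup_{j \ge 0}(P^N :_S (s^j))$ for all $T \gg 0$.

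The main obstacle is bookkeeping around the primary decomposition of $P^N$ — in particular making sure the argument does not secretly assume $P^N$ has a unique $P$-primary component (it does, as $P$ is minimal over $P^N$, so this is fine but worth stating) and correctly matching "$s$ belongs to all embedded primes of $P^N$" with "$s \notin P$" to conclude $s \in \sqrt{Q_j}$ for each embedded $j$ while $s \notin \sqrt{Q_0}$. Since this is a standard fact (and the paper cites \cite[Ch.~3]{5authorSymbolicSurvey}), I would likely present it concisely, emphasizing the primary-decomposition identification $P^{(N)} = Q_0$ and the two inclusions above, rather than belaboring the Noetherian stabilization.
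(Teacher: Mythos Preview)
The paper does not supply its own proof of this lemma; it is simply stated with a citation to \cite[Ch.~3]{5authorSymbolicSurvey}, so there is nothing to compare against. Your primary-decomposition argument is the standard one and is correct: identify $P^{(N)}$ with the unique $P$-primary component $Q_0$ of $P^N$, then verify $Q_0 = P^N :_S (s)^\infty$ by the two inclusions you give, noting that the exponents $N_j$ with $s^{N_j}\in Q_j$ depend only on $s$ and the decomposition, so $T=\max_j N_j$ already gives a uniform stabilization bound.

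One point to clean up: your opening detour is confused about which multiplicative set $W$ to use. Under the paper's definition (Section~\ref{section: Mars Walker Lemma}), for $I=P$ prime one takes $W = S \setminus \bigcup\{\mathfrak p:\mathfrak p\in\operatorname{Ass}_S(S/P)\} = S\setminus P$, \emph{not} the complement of the associated primes of $P^N$. The latter choice would give $P^N W^{-1}S \cap S = P^N$ (since no primary component is killed), which is not what you want. You do abandon this and pivot to the correct route, but the identification $P^{(N)}=Q_0$ that you then invoke without comment is exactly what follows from the correct $W = S\setminus P$, so you should state that cleanly rather than leave the earlier misstep in the write-up.
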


\begin{lemma}\label{lem: equivalence of symb power containments 001}
Given any proper ideal $I$ in a Noetherian ring $S$, and $E \in \Z_{\ge 0 }$, 
$$\mbox{$\operatorname{(1) }$   $I^{(N)} \subseteq I^{\lceil N / E \rceil}$ for all $N \ge 0$ } \iff \mbox{$\operatorname{(2) }$  $I^{(E (r-1) + 1)} \subseteq I^r$ for all $r>0$}.$$ 
\end{lemma}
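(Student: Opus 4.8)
\textbf{Proof proposal for Lemma \ref{lem: equivalence of symb power containments 001}.}

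The plan is to prove the two implications separately, using only elementary manipulations with ceilings and the standard monotonicity property that $I^{(a)} \subseteq I^{(b)}$ whenever $a \ge b$, together with the obvious containment $I^{(N)} \supseteq I^N$.

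First I would prove $(1) \Rightarrow (2)$. Fix $r > 0$ and set $N := E(r-1) + 1$. The key arithmetic observation is that $\lceil N/E \rceil = \lceil (E(r-1)+1)/E \rceil = (r-1) + \lceil 1/E \rceil = r$, since $E \ge 1$ forces $0 < 1/E \le 1$ and hence $\lceil 1/E \rceil = 1$. Applying hypothesis (1) with this value of $N$ gives $I^{(E(r-1)+1)} = I^{(N)} \subseteq I^{\lceil N/E\rceil} = I^r$, which is exactly (2).

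Next I would prove $(2) \Rightarrow (1)$. Fix $N \ge 0$; I want $I^{(N)} \subseteq I^{\lceil N/E \rceil}$. The $N = 0$ case is trivial (both sides are $S$, or one uses $\lceil 0 \rceil = 0$), so assume $N \ge 1$ and write $r := \lceil N/E \rceil \ge 1$. The point is that $r$ is the least integer with $Er \ge N$, equivalently $E(r-1) < N$, equivalently $E(r-1) + 1 \le N$. Therefore, by monotonicity of symbolic powers in the exponent, $I^{(N)} \subseteq I^{(E(r-1)+1)}$, and then hypothesis (2) applied to this $r$ yields $I^{(E(r-1)+1)} \subseteq I^r = I^{\lceil N/E\rceil}$. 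Chaining the two containments completes the argument.

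Neither direction presents a genuine obstacle; the only thing to be careful about is the edge behavior of the ceiling function (the cases $E = 1$, $N = 0$, and $E \mid N$) and the convention $I^{(0)} = I^0 = S$, so I would make sure the inequalities $E(r-1) < N \le Er$ are stated with the correct (non-)strictness. Note also that the lemma requires nothing about $I$ being prime or about associated primes, so I would not invoke Lemma \ref{lem: symb powers as saturations 001} here; the proof is purely formal once monotonicity $I^{(a)} \subseteq I^{(b)}$ for $a \ge b$ is in hand, and that monotonicity is immediate from the localization description of symbolic powers since $I^a \subseteq I^b$ pulls back through $W^{-1}S \cap S$.
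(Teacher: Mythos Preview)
Your proposal is correct and follows essentially the same approach as the paper: for $(1)\Rightarrow(2)$ you substitute $N=E(r-1)+1$ and compute $\lceil N/E\rceil=r$, and for $(2)\Rightarrow(1)$ you use the equivalence $r=\lceil N/E\rceil \iff E(r-1)+1\le N\le Er$ together with the monotonicity $I^{(m)}\subseteq I^{(n)}$ for $m\ge n$, exactly as the paper does (though the paper phrases the latter as $N=E(r-1)+j$ for some $1\le j\le E$). Your write-up is in fact a bit more careful about edge cases than the paper's terse version.
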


\begin{proof}
The case $N=0$ is trivial (the unit ideal is contained in itself), so we show equivalence when $N>0$. Given $r>0$, setting $N = E (r-1) + 1$ in (1) gives (2). That (2) implies (1) follows from noticing that for any two positive integers  $N , r$, we have $r = \lceil N/ E \rceil$ if and only if $N = E (r-1) + j$ for some $1 \le j \le E$, and $I^{(m)} \subseteq I^{(n)}$ when $m \ge n$. \qed
\end{proof}


Finally, we adapt Proposition \ref{prop: faithful flatness criterion 001} to a specialized form suited to the proof. The backdrop will be as follows.   
Fix a field $\F$. For $n \ge 2$, fix nonzero $\F$-algebras $R_1, \ldots, R_n$. Since $R_2 \otimes_\F \cdots \otimes_\F R_n \neq 0$ is free and hence faithfully flat over $\F$, the tensor product $R = R_1 \otimes_\F R_2 \otimes_\F \cdots \otimes_\F R_n$ is faithfully flat over $R_1$; indeed, 
$R$ is faithfully flat over each $R_i$ by permuting the tensor factor under consideration (Cf., Exercise 9.11 in Altman-Kleiman \cite{Alt-Klei13}). Thus we can view the factors $R_i$ as subrings of $R$.


\begin{prop}\label{prop: faithful flatness criterion 002}
Given the rings $R_i$ and $R$ as above, suppose that $R$ and each factor $R_i$ is Noetherian. Then for each $1 \le i \le n$, we have $I^{(N)} R = (IR)^{(N)}$ for all integers $N \ge 0$ where $$I \in \mathcal{I}(R_i) = \{\mbox{proper ideals }I \subseteq R_i  \colon \operatorname{Ass}_{R} (R / IR) = \{ PR  \colon P \in \operatorname{Ass}_{R_i} (R_i / I) \} \}.$$  Moreover, given $(N, r) \in (\Z_{\ge 0})^2$, 
$I^{(N)} \subseteq I^r$ if and only if $(IR)^{(N)} \subseteq (IR)^r.$ 
\end{prop}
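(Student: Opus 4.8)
The plan is to deduce Proposition \ref{prop: faithful flatness criterion 002} directly from Proposition \ref{prop: faithful flatness criterion 001} by verifying that the faithfully flat map $\phi\colon R_i \hookrightarrow R$ (for each fixed $1 \le i \le n$) satisfies the hypotheses required there. The key observation is that the map $\phi$ is genuinely faithfully flat: as recalled in the paragraph preceding the proposition, $R = R_1 \otimes_\F \cdots \otimes_\F R_n$ is faithfully flat over each tensor factor $R_i$, because the complementary tensor product $\bigotimes_{j \ne i} R_j$ is a nonzero, hence free, hence faithfully flat $\F$-module, and flatness/faithful flatness is preserved under base change along $\F \to R_i$. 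Both $R_i$ and $R$ are Noetherian by assumption, so the setting of Proposition \ref{prop: faithful flatness criterion 001} applies verbatim with $A = R_i$ and $B = R$.

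\emph{First} I would note that the set $\mathcal{I}(R_i)$ defined in the statement of Proposition \ref{prop: faithful flatness criterion 002} is literally the set $\mathcal{I}(A)$ from the general discussion, instantiated at $A = R_i$ and $B = R$; there is nothing to check here beyond matching notation. \emph{Second}, for any $I \in \mathcal{I}(R_i)$, Proposition \ref{prop: faithful flatness criterion 001} yields exactly the equality $I^{(N)}R = (IR)^{(N)}$ for all integers $N \ge 0$, which is the first assertion. \emph{Third}, for the equivalence $I^{(N)} \subseteq I^r \iff (IR)^{(N)} \subseteq (IR)^r$, I would again invoke the second part of Proposition \ref{prop: faithful flatness criterion 001}: the forward implication is expansion of ideals along $\phi$ combined with $I^{(N)}R = (IR)^{(N)}$ and $I^r R = (IR)^r$ (the latter since $I^r$ and $I^rR$ share a generating set), while the reverse implication uses that a faithfully flat map reflects ideal containments, i.e.\ $J_1 R \subseteq J_2 R$ implies $J_1 = J_1 R \cap R \subseteq J_2 R \cap R = J_2$ for ideals $J_1, J_2$ of $R_i$.

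There is essentially no obstacle: the entire content is the verification that the hypotheses of Proposition \ref{prop: faithful flatness criterion 001} are met in this particular multi-factor tensor product setting, which is immediate once faithful flatness of $R_i \hookrightarrow R$ is granted. If I wanted the proof to be completely self-contained I would spell out the faithful-flatness argument (nonzero $\F$-module is free, base change preserves faithful flatness, permute factors), but since that is precisely what the paragraph before the proposition establishes, the cleanest write-up is simply: \emph{"This is immediate from Proposition \ref{prop: faithful flatness criterion 001} applied to the faithfully flat map $R_i \hookrightarrow R$ discussed above, taking $A = R_i$ and $B = R$."} I would include one or two sentences recalling why that map is faithfully flat so the reader need not hunt for it.

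\begin{proof}
Fix $1 \le i \le n$. As observed in the paragraph preceding the statement, $R = R_1 \otimes_\F \cdots \otimes_\F R_n$ is faithfully flat over $R_i$: the tensor product $\bigotimes_{j \ne i} R_j$ of the remaining factors is a nonzero $\F$-vector space, hence free and faithfully flat over $\F$, and faithful flatness is preserved under the base change $\F \to R_i$; permuting the tensor factors lets us place $R_i$ first. Both $R_i$ and $R$ are Noetherian by hypothesis. Thus Proposition \ref{prop: faithful flatness criterion 001} applies to the faithfully flat map $\phi \colon R_i \hookrightarrow R$ with $A = R_i$ and $B = R$, and the set $\mathcal{I}(R_i)$ defined in the statement is exactly the set $\mathcal{I}(A)$ of that proposition. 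Hence $I^{(N)} R = (IR)^{(N)}$ for all $I \in \mathcal{I}(R_i)$ and all integers $N \ge 0$, and, for any $(N, r) \in (\Z_{\ge 0})^2$, the containment $I^{(N)} \subseteq I^r$ holds if and only if $(IR)^{(N)} = I^{(N)} R \subseteq I^r R = (IR)^r$. \qed
\end{proof}
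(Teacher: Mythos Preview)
Your proposal is correct and matches the paper's approach: the paper states this proposition as a direct adaptation of Proposition~\ref{prop: faithful flatness criterion 001} and does not supply a separate proof, relying on the faithful flatness of $R_i \hookrightarrow R$ established in the preceding paragraph. Your write-up simply makes explicit the one-line deduction the paper leaves implicit.
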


\begin{proof}[Proof of Theorem \ref{thm: finite tensor products 001}]
By Propositions \ref{prop: proof reduction} and \ref{prop: faithful flatness criterion 002}, we may assume that all the toric rings $R_i$ and $R$ are built from full-dimensional pointed rational  polyhedral cones.  For each $1 \le i \le n$, let $x_{i, 1}, \ldots, x_{i, t_i}$ be the monomial algebra generators of $R_i$ over $\F$ corresponding to the Hilbert basis. By the isomorphism $R \cong R_1 \otimes_\F \cdots \otimes_\F R_n$, $R$ is the $\F$-algebra generated by $\{x_{i ,1}, \ldots, x_{i, t_i} \colon 1 \le i \le n \}$.  We define $S(N) := \{ (A_1, \ldots, A_n) \in (\Z_{\ge 0})^n \colon \sum_{i=1}^n A_i = N \}$ for each $N \ge 0$, so
\begin{align*}
Q^N = \left(\sum_{i = 1}^n (P_i R) \right)^N = \sum_{(A_1, \ldots, A_n) \in S(N)} \prod_{i=1}^n (P_i R)^{A_i}. 
\end{align*}  
We will in fact show that the monomial ideal 
\begin{align}\label{symbolic power partition containment 001}
Q^{(N)}  \subseteq \sum_{(A_1, \ldots, A_n) \in S(N)} \prod_{i=1}^n (P_i R)^{(A_i)}. 
\end{align}  
We may assume without loss of generality that all of the primes $P_i$ are nonzero. 

Take an arbitrary monomial $g = \prod_{i=1}^n m_i \in R$ where $m_i$ is a monomial in the $x_{i , \ell}$ for $1 \le i \le n$ and $1 \le \ell \le t_i$. Re-indexing if necessary, we may assume that 
$P_i R = (x_{i, 1}, \ldots, x_{i, s_i} )R$ for $1 \le i \le n$  
where $1 \le s_i \le t_i$, and this is a minimal generating set in the sense of Nakayama's Lemma since $R$ is $\N$-graded by the discussion preceding Remark \ref{rem:strong-convexity-stipulation}. Define a ``complement" monomial
$\mathcal{M} = \prod_{i=1}^n \prod_{\ell = s_i+1}^{t_i} x_{i , \ell}$ consisting of all algebra generators \textbf{not} among the generators of the $P_i R$. Any embedded prime of a power of $Q$ is graded (read, monomial), so that $Q^{(N)} = Q^N :_R (\mathcal{M})^\infty$ as a saturation per Lemma \ref{lem: symb powers as saturations 001}. If $g \in Q^{(N)}$, then for all $T \gg 0$, the monomial 
$$g \mathcal{M}^T \in Q^N =  \sum_{(A_1, \ldots, A_n) \in S(N)} \prod_{i=1}^n (P_i R)^{A_i},$$ whence for some $(A_1, \ldots, A_n) \in S(N)$ and each $1 \le j \le n$ we have $$g \mathcal{M}^T  =  \prod_{i=1}^n m_i \left(\prod_{\ell = s_i+1}^{t_i} x_{i , \ell} \right)^T 
= m_j  \prod_{i=1}^n m_i^{1 - \delta_{i j}} \left(\prod_{\ell = s_i+1}^{t_i} x_{i , \ell} \right)^T  \in  \prod_{j=1}^n (P_j R)^{A_j},$$ where $\delta_{ij}$ is the Kronecker delta.   
We can express $(P_j R)^{(A_j)} = P_j R^{A_j} :_R (\mathcal{N})^\infty$ where $\mathcal{N}$ is any ``complement" monomial built from powers of all algebra generators \textbf{not} among the generators of $P_j R$. 
Therefore, setting $ \mathcal{N} = \mathcal{N}(j) =  \prod_{i=1}^n m_i^{1 - \delta_{i j}} \left(\prod_{\ell = s_i+1}^{t_i} x_{i , \ell} \right)^T$, 
 we see $m_j \in  (P_j R)^{(A_j)} $ for each $1 \le j \le n$. Thus $g  = \prod_{j=1}^n m_j \in \prod_{j=1}^n (P_j R)^{(A_j)}$. Since $g \in Q^{(N)}$ was arbitrary, \eqref{symbolic power partition containment 001} is immediate.

Finally, we show that \textbf{(!)} $Q^{(N)} \subseteq Q^{\lceil N / D \rceil}$ for all $N \ge 0$ where the integer $D = \max \{D_1, \ldots, D_n\}$. Using \eqref{symbolic power partition containment 001}: where $\mathcal{A} = (A_1 , \ldots, A_n) \in (\Z_{\ge 0})^n$,
\begin{align*}
Q^{(N)}  \stackrel{\eqref{symbolic power partition containment 001}}{\subseteq} \sum_{\mathcal{A} \in S(N)} \prod_{i=1}^n (P_i R)^{(A_i)} \stackrel{\mathbf{(A)}}{\subseteq} \sum_{\mathcal{A} \in S(N)} \prod_{i=1}^n (P_i R)^{ \lceil A_i  / D_i \rceil }  \stackrel{\mathbf{(B)}}{\subseteq} Q^{\lceil N / D \rceil}. 
\end{align*}  
To see \textbf{(A)}, by hypothesis, for all $1 \le i \le n$, $P_i^{(D_i (r-1) + 1)} \subseteq P_i^r$ for all $r>0$, so  $(P_i R)^{(D_i (r-1) + 1)} \subseteq (P_i R)^r$ for all $r>0$ by Proposition \ref{prop: faithful flatness criterion 002} since $P_i \in \mathcal{I}(R_i)$ by  the proof of Lemma \ref{lem: monomial primes as sums of extended ideals 001}. Thus by Lemma \ref{lem: equivalence of symb power containments 001},   
 $(P_i R)^{(A_i)} \subseteq (P_i R)^{\lceil A_i / D_i \rceil}$ for all $A_ i \ge 0$ and all $1 \le i \le n$. 
 As for \textbf{(B)}: for each $(A_1, \ldots, A_n) \in S(N)$, we have  $\prod_{i=1}^n (P_i R)^{ \lceil A_i  / D_i \rceil }  \subseteq Q^{\lceil N / D \rceil}$; indeed, $\lceil A_i  / D_i \rceil  \ge \lceil A_i  / D \rceil$ for each $1 \le i \le n$, and the integer $\sum_{i=1}^n \lceil A_i  / D \rceil \ge \lceil (\sum_{i=1}^n A_i) / D \rceil  = \lceil N / D \rceil$ for each $(A_1, \ldots, A_n) \in S(N)$. To finish, since $Q^{(N)} \subseteq Q^{\lceil N / D \rceil}$ for all $N \ge 0$, we invoke Lemma \ref{lem: equivalence of symb power containments 001} again.    \qed
\end{proof}

\section{Proving Theorem 1.3 in a Refined Form, Class Group Computations}\label{section: Main Event 2}
Theorem \ref{thm: Veronese hypersurf} is easy if $n = 1$ or $D=1$: all rings in sight are polynomial rings and monomial primes are complete intersections. Thus for the remainder of this section, \textbf{we will assume that $n \ge 2$ and $D \ge 2$.} 
We will give presentations of our rings as subrings of the domain of Laurent polynomials $L = \F[s_1^{\pm 1}, \ldots, s_{n-1}^{\pm 1}, u^{\pm 1}]$  in $n$ indeterminates over the field  $\F$.  The proof will proceed in cases, starting with the ring 
$H_D = \frac{\F [x_1, \ldots, x_n, z]}{(z^D - x_1 \cdots x_n) }.$

\begin{remark*}
Maintaining all notation conventions from Section \ref{sec:Toric-Alg-Prelims00}, in practice going forward we pick a basis $e_1, \ldots, e_n$ of our lattice $N$ with dual basis $e_1^*, \ldots, e_n^*$ for the dual lattice $M$, so that both $N$ and $M$ are isomorphic to $\Z^n$. Then the pairing $\langle \cdot , \cdot \rangle$ becomes the usual dot product.
\end{remark*}


\subsection{The Hypersurface Case:} We first observe that $H_D$ is a toric ring, up to isomorphism: 
\begin{lemma}\label{lem:Dth-power-hypersurf}
Consider the full-dimensional simplicial pointed rational polyhedral cone $\sigma_{D}^{(n)} \subseteq N_\RR \cong \RR^n$ whose ray generators are $D e_i + e_n \in N$ for $1 \le i < n$ and 
$e_n \in N$ in terms of the selected basis for $N$.
\begin{enumerate}
\item The Hilbert basis of the semigroup $(\sigma_D^{(n)})^\vee \cap M$ consists of $n+1$ vectors: the $n$ dual basis vectors $e_1^*, \ldots, e_n^*$, together with the vector $-e_1^* \cdots -e_{n-1}^* +  D e_n^* \in M$. 
\item The toric ring $\F[(\sigma_D^{(n)})^\vee \cap M]  \cong \frac{\F[x_1, \ldots, x_n , z]}{(z^{D} - x_1 \cdots x_n)} = H_D$.
\end{enumerate} 
\end{lemma}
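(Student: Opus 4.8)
The plan is to verify the two assertions by a direct combinatorial computation with the cone $\sigma_D^{(n)}$ and its dual, then recognize the resulting semigroup ring by its presentation. I will work throughout with $N \cong M \cong \Z^n$ and the pairing equal to the dot product, as set up in the remark preceding the lemma.

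\textbf{Step 1: Describe the dual cone.} First I would compute $(\sigma_D^{(n)})^\vee$ explicitly. A vector $w = (w_1, \ldots, w_n) \in M_\RR$ lies in the dual exactly when $\langle w, u_\rho \rangle \ge 0$ for each ray generator $u_\rho$. Against $e_n$ this gives $w_n \ge 0$; against $D e_i + e_n$ (for $1 \le i < n$) this gives $D w_i + w_n \ge 0$. So $(\sigma_D^{(n)})^\vee = \{ w \colon w_n \ge 0 \text{ and } D w_i + w_n \ge 0 \text{ for } 1 \le i < n \}$. Since the $n$ rays of $\sigma_D^{(n)}$ are linearly independent, $\sigma_D^{(n)}$ is full-dimensional and simplicial (as claimed), hence $(\sigma_D^{(n)})^\vee$ is also full-dimensional and pointed, and it is cut out by exactly the $n$ facet inequalities just listed.

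\textbf{Step 2: Compute the Hilbert basis.} I claim the $n+1$ vectors $e_1^*, \ldots, e_n^*$ and $w_0 := -e_1^* - \cdots - e_{n-1}^* + D e_n^*$ all lie in $(\sigma_D^{(n)})^\vee \cap M$ — this is an immediate check against the inequalities from Step 1 (for $w_0$: its last coordinate is $D \ge 0$, and $D(-1) + D = 0 \ge 0$). To see they generate the semigroup $(\sigma_D^{(n)})^\vee \cap M$ over $\Z_{\ge 0}$, take an arbitrary lattice point $w = (w_1, \ldots, w_n)$ satisfying the inequalities. The strategy is to subtract off copies of $w_0$ to clear the negative coordinates: let $k = \max\{0, \lceil -w_1 \rceil^+, \ldots, \lceil -w_{n-1}\rceil^+\}$ be the largest "deficit" among the first $n-1$ coordinates (each $w_i$ is an integer, so $k = \max(0, -w_1, \ldots, -w_{n-1})$). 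Then $w - k w_0$ has nonnegative first $n-1$ coordinates, and its last coordinate is $w_n - kD$; the facet inequality $D w_i + w_n \ge 0$ with $w_i = -k$ (or the relevant index achieving the max) forces $w_n \ge kD$, so the last coordinate is also $\ge 0$. Hence $w - k w_0 \in \Z_{\ge 0}^n = \sum \Z_{\ge 0} e_i^*$, giving $w = (w - k w_0) + k w_0$ as a nonnegative combination. Finally I would check minimality/irreducibility: none of the $n+1$ vectors is a sum of two nonzero semigroup elements — for the $e_i^*$ this is clear since they are among the standard basis vectors and any nonzero semigroup element has nonnegative last coordinate with the only way to get last coordinate $0$ being combinations of $e_1^*, \ldots, e_{n-1}^*$ plus... actually the cleaner route is to note $e_i^*$ has coordinate sum... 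I will instead argue each is a vertex-type/extremal element of the semigroup; this minimality verification is the one place that needs a little care, and it is the main obstacle, though a routine one.

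\textbf{Step 3: Identify the ring.} Given the Hilbert basis $\{e_1^*, \ldots, e_n^*, w_0\}$, the toric ring $\F[(\sigma_D^{(n)})^\vee \cap M]$ is generated as an $\F$-algebra by $x_i := \chi^{e_i^*}$ ($1 \le i \le n$) and $z := \chi^{w_0}$. The only additive relation among Hilbert basis elements relevant to a presentation is $w_0 + e_1^* + \cdots + e_{n-1}^* = D e_n^*$, which translates to $z \cdot x_1 \cdots x_{n-1} = x_n^D$. Wait — I need to match the stated presentation $z^D = x_1 \cdots x_n$, so I would instead choose the generator assignment dictated by that: set $x_i = \chi^{e_i^*}$ for $1 \le i \le n-1$, $x_n = \chi^{w_0}$, and $z = \chi^{e_n^*}$; then $z^D = \chi^{D e_n^*} = \chi^{w_0 + e_1^* + \cdots + e_{n-1}^*} = x_n \cdot x_1 \cdots x_{n-1} = x_1 \cdots x_n$. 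So there is a surjection $\F[x_1, \ldots, x_n, z]/(z^D - x_1 \cdots x_n) \twoheadrightarrow \F[(\sigma_D^{(n)})^\vee \cap M]$. To see it is an isomorphism, compare $\F$-bases: the quotient ring $H_D$ has $\F$-basis given by monomials $x_1^{a_1} \cdots x_n^{a_n} z^{c}$ with $0 \le c < D$ (using the relation to reduce $z$-powers), and the map sends this to $\chi^{(a_1 + \cdots)\ldots}$ — unwinding, the map on these basis monomials is injective because the exponent vectors $\sum a_i (\text{image of } x_i) + c\, e_n^*$ with $0 \le c < D$ are pairwise distinct lattice points and they exhaust $(\sigma_D^{(n)})^\vee \cap M$ by Step 2. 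This dimension/basis count finishes the isomorphism. Finally, the normality of $H_D$ asserted in Theorem \ref{thm: Veronese hypersurf} is then automatic since toric rings of the form $\F[C^\vee \cap M]$ are normal by \cite[Thm.~1.3.5]{torictome}.

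I expect Step 2 (especially the irreducibility/minimality half) to be the main technical point, but it is a short finite check rather than a genuine difficulty; Steps 1 and 3 are routine once the dual cone and the generator dictionary are written down correctly.
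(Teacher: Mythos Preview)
Your proposal is correct but proceeds rather differently from the paper. For part~(1) the paper simply defers to the \texttt{hilbertBasis} routine in Macaulay2, whereas you give an honest argument: you compute the facet inequalities of the dual cone, prove generation by subtracting $k = \max(0,-w_1,\ldots,-w_{n-1})$ copies of $w_0$ to land in the positive orthant, and flag irreducibility as the remaining routine check. (It is routine: for $e_i^*$ with $i<n$ note that any semigroup element with last coordinate $0$ already lies in $\Z_{\ge 0}^{n-1}\times\{0\}$; for $e_n^*$ and $w_0$ a short case split on the last coordinate of a putative summand suffices.) For part~(2) the paper takes a slicker route: it realizes the semigroup ring concretely as $\F[s_1,\ldots,s_{n-1},\,u^D/(s_1\cdots s_{n-1}),\,u]$ inside a Laurent ring, observes that the kernel of the obvious surjection from $\F[x_1,\ldots,x_n,z]$ is a height-one prime in a UFD and hence principal, and identifies the generator as $z^D - x_1\cdots x_n$ via Eisenstein's criterion. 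Your basis-matching argument is more elementary and entirely self-contained, but it obliges you to check that the monomials $x_1^{a_1}\cdots x_n^{a_n} z^c$ with $0\le c<D$ map bijectively onto $(\sigma_D^{(n)})^\vee\cap M$; the paper's dimension-plus-Eisenstein trick sidesteps that bookkeeping.
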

\begin{proof}
The reader can use the hilbertBasis algorithm implemented in the \texttt{Polyhedra} package in Macaulay2 \cite{M2} to check (1). For (2), recall that to each $m = \sum_{i=1}^n m_i e_i^*  \in (\sigma_D^{(n)})^\vee \cap M$ we assign a Laurent monomial $\chi^m = s_1^{m_1} \cdots s_{n-1}^{m_{n-1}} u^{m_{n}}$ in the semigroup ring $\F[(\sigma_D^{(n)})^\vee \cap M]$. Given (1), in terms of $\F$-algebra generators we have 
$$\F [(\sigma_D^{(n)})^\vee \cap M] = \F \left[ s_1, \ldots, s_{n-1}, \frac{u^D}{(s_1 \cdots s_{n-1})} , u \right] \subseteq  \F[s_1^{\pm 1}, \ldots, s_{n-1}^{\pm 1}, u^{\pm 1}].$$ Given a polynomial ring $R = \F[x_1, \ldots, x_{n-1}, x_n, z]$ in $n+1$ variables, consider the surjective algebra map $\phi \colon R = \F[x_1, \ldots, x_{n-1}, x_n, z] \twoheadrightarrow \F [(\sigma_D^{(n)})^\vee \cap M]$ under which $x_i \mapsto s_i$ for each $1 \le i \le n-1$, $x_n \mapsto \frac{u^D}{(s_1 \cdots s_{n-1})}$, and $z \mapsto  u$. Since $\dim (R) = \dim(\F [(\sigma_D^{(n)})^\vee \cap M]) + 1$, we conclude that the kernel of $\phi$ is a height one prime in the UFD $R$, and hence is principal. Now $F = z^{D} - x_1 \cdots x_n \in R$ is irreducible by Eisenstein's Criterion  
and belongs to the kernel of $\phi$, so $\ker \phi = (F)$, and the isomorphism claim follows.  \qed 
\end{proof}


We now deduce the following refinement of Theorem \ref{thm: Veronese hypersurf} for $H_D$:
\begin{theorem}\label{thm: Dth power hypersurface 1}
Take the ring $H_D =  \F[x_1, \ldots, x_n , z]/(z^{D} - x_1 \cdots x_n),$ and $P$ one of the monomial prime ideals of $H_D$ $($i.e., $M$-graded /torus-invariant$)$; 
 assume $P$ is nonzero and nonmaximal. When $D \le \operatorname{ht}(P)$ $($the \textbf{height} of $P)$, $P^{(E)} = P^E$ for all $E >0$. If $D \ge \operatorname{ht}(P)$  and $E \equiv 1 \pmod{D}$, then  
$$P^{(E)} \subseteq P^{\operatorname{\operatorname{ht}}(P) \left(\frac{E-1}{D} \right) + 1}.$$
In particular, $P^{(Dr)} \subseteq P^{(D(r -1) + 1)} \subseteq P^{ \operatorname{\operatorname{ht}}(P) (r-1) +1} \subseteq P^r$ for all $r>0$. 
\end{theorem}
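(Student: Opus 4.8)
The plan is to set up the toric dictionary from Lemma 4.5 and then compute symbolic powers of monomial primes directly via the order-of-vanishing description coming from Theorem 3.10's exact sequence. First I would recall that $H_D \cong \F[\sigma^\vee \cap M]$ where $\sigma = \sigma_D^{(n)}$ is the simplicial cone with ray generators $u_i = De_i + e_n$ ($1\le i<n$) and $u_n = e_n$. Every nonzero, nonmaximal monomial prime $P = P_F$ corresponds to a proper nonzero face $F$ of $\sigma$, spanned by some subset of the rays; by Proposition 2.11 (Minkowski sum--ideal sum) $P_F = \sum_{\rho\in F(1)} P_\rho$, and $\operatorname{ht}(P_F) = \dim F = |F(1)|$. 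The key computation is to identify, for each ray $\rho\in\Sigma(1)$, the valuation $\nu_\rho$ on $R_\F = H_D$ attached to the height-one prime $P_\rho$, namely $\nu_\rho(\chi^m) = \langle m, u_\rho\rangle$, and then to compute $\operatorname{div}[P_F]$ in the sense of Proposition 2.8(a): since $P_F = P_{\rho_1}+\dots+P_{\rho_h}$, a short check at each $P_\rho$ (work in the DVR $R_{P_\rho}$, as in the proof of Lemma 1.1) shows $\operatorname{div}[P_F] = P_{\rho_1} + \dots + P_{\rho_h}$, whence $P_F^{(E)} = P_{\rho_1}^{(E)}\cap\dots\cap P_{\rho_h}^{(E)}$ for all $E\ge 0$ by Proposition 2.8(a).

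Next I would read off the class-group relations. By Theorem 3.10, $\operatorname{Cl}(H_D) = \bigl(\bigoplus_\rho \Z P_\rho\bigr)/\operatorname{im}\phi$ where $\phi(m) = \sum_\rho \langle m, u_\rho\rangle P_\rho$. Plugging in $m = e_j^*$ for $1\le j\le n-1$ gives the relation $D\,P_{\rho_j} + P_{\rho_n}\cdot(\text{stuff}) = 0$; more precisely, since $\langle e_j^*, u_i\rangle = D\delta_{ij}$ for $i<n$ and $\langle e_j^*, u_n\rangle = 0$, one gets $D P_{\rho_j} \sim 0$ after accounting for the remaining generator, and $m = e_n^*$ (with $\langle e_n^*, u_i\rangle = 1$ for all $i$) gives $P_{\rho_1}+\dots+P_{\rho_n} \sim 0$. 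The upshot I want is: each $[P_\rho]$ is $D$-torsion in $\operatorname{Cl}(H_D)$, so $P_\rho^{(D)}$ is principal for each $\rho$. Then for a face $F$ with $h = \operatorname{ht}(P_F) = |F(1)|$, write $E = Dr' + 1$ with $r' = (E-1)/D$. Using $P_F^{(E)} = \bigcap_\rho P_\rho^{(E)} = \bigcap_\rho P_\rho^{(Dr')}\cdot P_\rho \subseteq \bigl(\bigcap_\rho P_\rho^{(Dr')}\bigr)\cdot\bigl(\bigcap_\rho P_\rho\bigr)$ — here one must be a little careful, as intersection does not distribute over product in general, so instead I would argue monomial-by-monomial or invoke Proposition 2.8(b)-style principality: each $P_\rho^{(Dr')} = (g_\rho^{r'})$ is principal, so $P_F^{(Dr')} = \bigcap_\rho (g_\rho^{r'})$ and a direct divisor comparison gives $P_F^{(E)} = P_F^{(Dr')}\cdot P_F \subseteq (P_F)^{?}$. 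The cleaner route: show $P_F^{(Dr')} \subseteq P_F^{\,h r' }$ by the monomial/saturation argument (a monomial in $P_F^{(Dr')}$ has, at each ray $\rho\in F(1)$, valuation $\ge Dr'$, but the $D$ rays impose that its total degree in the $P_F$-generators is at least... — this is exactly the Veronese-type counting), then $P_F^{(E)} = P_F^{(Dr')}\, P_F \subseteq P_F^{h r' + 1}$.

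The main obstacle, and the step I would spend the most care on, is the inequality $P_F^{(Dr')} \subseteq P_F^{\operatorname{ht}(P_F)\, r'}$: this is where the height enters as a multiplier and where the two regimes $D\le\operatorname{ht}(P)$ versus $D\ge\operatorname{ht}(P)$ bifurcate. For $D\le h$ I would show $P_F^{(E)} = P_F^E$ outright — this should follow because, after the toric reduction, $P_F$ becomes (up to the Laurent-polynomial factor handled by Proposition 2.18) a prime of a polynomial ring, or more robustly because the relevant local ring $R_{P_F}$ has the rays of $F$ spanning enough of $N_\R$ that $P_F$ is a set-theoretic (indeed scheme-theoretic) complete intersection on the punctured spectrum — I would verify this by an explicit generator count using the Hilbert basis of $\sigma^\vee$ from Lemma 4.5. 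For $D\ge h$, the saturation computation $P_F^{(Dr')} = P_F^{Dr'} :_{H_D}(\mathcal{M})^\infty$ (Lemma 4.3) lets me track exponents: a monomial $\chi^m$ lies in $P_F^{(Dr')}$ iff $\sum_{\rho\in F(1)}\langle m, u_\rho\rangle \ge$ the right threshold, and since there are $h$ rays each contributing, rescaling by $D$ downstairs converts to $hr'$ upstairs — this is the Harbourne–Huneke bookkeeping, essentially the same $\lceil\cdot\rceil$ arithmetic as in Lemma 3.5 and the proof of Theorem 3.2. Finally, the "in particular" line $P_F^{(Dr)}\subseteq P_F^{(D(r-1)+1)}\subseteq P_F^{h(r-1)+1}\subseteq P_F^r$ is immediate from monotonicity of symbolic powers, the displayed inclusion with $E = D(r-1)+1$, and $h(r-1)+1\ge r-1+1 = r$ since $h = \operatorname{ht}(P_F)\ge 1$.
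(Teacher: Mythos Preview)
Your proposal contains a genuine structural error, and the parts that could be salvaged collapse back into the paper's own argument.

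The central misstep is the claim that $P_F^{(E)} = P_{\rho_1}^{(E)}\cap\cdots\cap P_{\rho_h}^{(E)}$ via Proposition~2.8(a). That proposition applies only to \emph{pure height-one} ideals, whereas $P_F$ is a prime of height $h\ge 2$; writing $\operatorname{div}[P_F]$ in this context is meaningless. Worse, the formula is simply false: already for $E=1$ we have $x_2\in P_F=(z,x_1,x_2)$ but $x_2\notin P_{\rho_1}=(z,x_1)$, so $P_F^{(1)}\not\subseteq P_{\rho_1}^{(1)}$. All of the class-group machinery you set up (torsion of $[P_\rho]$, principality of $P_\rho^{(D)}$) is aimed at height-one ideals and does not transfer to the height-$h$ prime $P_F$; in particular your asserted factorization $P_F^{(Dr'+1)}=P_F^{(Dr')}\cdot P_F$ has no justification for $h\ge 2$ (Lemma~1.1 is again a height-one statement).

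Your plan for the case $D\le h$ is also off: $(H_D)_{P_F}$ is \emph{not} regular once $h\ge 2$, since the Jacobian criterion shows the singular locus of $H_D$ contains every $V(z,x_i,x_k)$ with $i\ne k$, hence contains $V(P_F)$. So neither regularity nor a complete-intersection shortcut is available. The paper instead proves $P_F^{(E)}=P_F^E$ in this regime by the same direct monomial computation that handles the other case: one rewrites $g\cdot m^T$ using $z^D=x_1\cdots x_n$, extracts $T'=\min(a_1,\dots,a_j)$ copies of the relation, and reads off the key inequality $(D-j)T'+\bigl(\ell+\sum_{i\le j}a_i\bigr)\ge E$. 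When $D\le j$ the first term is nonpositive, forcing $\ell+\sum a_i\ge E$ and hence $g\in P_F^E$; when $D\ge j$ a short contradiction argument (splitting on whether $T'\le(E-1)/D$) gives $\ell+\sum a_i\ge j(E-1)/D+1$. Your final paragraph gestures at exactly this saturation-and-exponent-tracking computation, so once the height-one detour is abandoned you are left with the paper's proof rather than an alternative to it.
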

\begin{proof}
Citing the proof of Lemma \ref{lem:Dth-power-hypersurf}(2), the height $j$ prime ideal $P_j := (z, x_1, \ldots, x_{j})H_D$, for $1 \le j \le n-1$, equals $P_\tau$ for the $j$-dimensional face $\tau$ of 
$\sigma_{D}^{(n)}$ generated by $D e_i + e_n$ for $1 \le i \le j$. 
As a saturation, 
$P_j^{(E)} = P_j^E :_{H_D} (\prod_{i=j+1}^n x_i )^\infty$.
 Since $P_j^{(E)}$ is monomial, in chasing down inclusions below it suffices to discern which monomial classes
$g = (z^\ell x_1^{a_1} \cdots x_j^{a_j}) (x_{j+1}^{a_{j+1}} \cdots x_{n}^{a_n} ) \in H_D$ 
multiply a power of $m = \prod_{i=j+1}^n x_i $ into $P_j^E$. For $g$ as above, by definition $g \in P_j^{(E)}$ if and only if for all $T \gg 0$,
\begin{align*}
P_j^E \ni m^T g &= z^\ell \left(\prod_{i=j+1}^n x_i^{a_i + T} \right) 
\left(\prod_{i=1}^j x_i^{a_i}\right) \\
&= z^\ell \left(\prod_{i=1}^n x_i \right)^{T'}  \left( \prod_{i=j+1}^n x_i^{a_i + T- T'} \right) 
\left(\prod_{i=1}^j x_i^{a_i-T'}\right)\\
&= \left(z^{D \cdot T' + \ell} \prod_{i=1}^j x_i^{a_i-T'}\right)  \left( \prod_{i=j+1}^n x_i^{a_i + T- T'} \right),  
\end{align*}
where $T' = T'(T) := \min(a_1, \ldots, a_j, a_{j+1} + T, \ldots, a_n +T) = \min(a_1, \ldots, a_j)$ for all $T \gg 0$.  We conclude that 
$z^{D \cdot T' + \ell} \left(\prod_{i=1}^j x_i^{a_i-T'}\right)  \in P_j^E$, and infer the inequality 
\begin{equation}\label{eqn: inequality 001}
(D-j) T' + \left( \sum_{i=1}^j a_i \right) + \ell  \ge E.
\end{equation}

Before proceeding, notice that since $T' \ge 0$, when $D \le j$ so that the number $(D-j)T'$ is nonpositive, \eqref{eqn: inequality 001} implies that $\left( \sum_{i=1}^j a_i \right) + \ell  \ge E$, so $(z^\ell x_1^{a_1} \cdots x_j^{a_j}) \in P_j^E$ and hence $g \in P_j^E$ already. Thus $P_j^{(E)} = P_j^E$ for all $E>0$ when $D \le j$, since both are generated by monomial classes. 
Thus in the remainder of the proof \textbf{we will assume that} $D \ge j = \operatorname{ht}(P_j)$,   
 i.e., $D-j \ge 0$. 
 
 In this case, assuming $E \equiv 1 \pmod{D}$, we now  show that 
$P_j^{(E)} \subseteq P_j^{1 + j \left(\frac{E-1}{D}\right)}$. 
 Fix a monomial $$g = \left(z^\ell \prod_{i=1}^j x_i^{a_i} \right) \left(\prod_{i=j+1}^n x_{i}^{a_i} \right) \in P_j^{(E)},$$ and $T' = \min(a_1, \ldots, a_j)$ exactly as before. Now $g \in P_j^G$ where $G := \ell + \sum_{i=1}^j a_i$. The more involved case for us is when 
  (**) $T' \le (E-1)/D$: otherwise $$G \ge a_1 + \cdots+  a_j  \ge j T' \ge j (E-1)/D +  1,$$ whence one easily infers that $g \in P_j^{j \left(\frac{E-1}{D}\right) +1}$. Assuming (**), we now show that $G \ge j \left(\frac{E-1}{D}\right)+1$. Suppose to the contrary that 
$G \le j \left(\frac{E-1}{D}\right)$. Since $g \in P_j^{(E)}$, inequality \eqref{eqn: inequality 001} above says $$(D-j) T' + G  =  (D-j) T' + \left( \sum_{i=1}^j a_i \right) + \ell  \ge E  \Longrightarrow G  \ge E - (D-j)T'.$$ 
Then since 
$E-1 - D T' \ge 0$ by (**), and $D-j \ge 0$, we see that 
\begin{align*}
j \left(E-1\right) = D j \left(\frac{E-1}{D}\right) \ge D G &\ge D E -  D (D-j)T' \\
&= D (E-1) + D  -  D (D-j)T'\\
&= j (E-1) + D + (D-j) (E-1- D T')\\
&\ge j (E-1) + D + (D-j)(0)\\
&= j(E-1) + D
\end{align*}
a contradiction. Thus $G \ge j \left(\frac{E-1}{D}\right) +1$, so $g \in P_j^{1+ j \left(\frac{E-1}{D}\right)}$. 
In particular, when $E = D(r-1)+1$, we have
$P_j^{(D(r-1)+1)} \subseteq	 P_j^{1 + j (r-1)}.$ 
Finally, applying coordinate changes according to every permutation of $x_{[n]} := \{x_1, \ldots, x_n\}$, any (nonzero, nonmaximal) monomial prime ideal in $H_D$ can be obtained from the $P_j$ running through all indices $1 \le j \le n-1$, along with obtaining the desired containments.    \qed \end{proof}

\subsection{The Veronese Case:} Let $\N = \Z_{\ge 0}$ denote the set of \textbf{nonnegative integers}. To start, 
\begin{lemma}\label{lem:Vero-stuff}
Consider the full-dimensional simplicial  pointed rational polyhedral cone $\eta_{D}^{(n)} \subseteq N_\RR \cong \RR^n$ whose ray generators are $e_i$ for $1 \le i < n$ along with the vector  
$-e_1  -   \ldots -e_{n-1} + D e_n$ in terms of the basis selected for $N$. 
\begin{enumerate}
\item The Hilbert basis of the semigroup $(\eta_D^{(n)})^\vee \cap M$  is the set of vectors $$\left\lbrace e_n^*  + \sum_{i=1}^{n-1} a_i e_i^* \in M \colon \mbox{all $a_i \ge 0$ and } 0 \le \sum_{i=1}^{n-1} a_i  \le D \right\rbrace .$$ 
\item The toric ring $\F[(\eta_D^{(n)})^\vee \cap M]  \cong V_D$, the $D$-th Veronese subring of the polynomial ring $\F[s_1, \ldots, s_{n-1}, u]$ in the $n$ indeterminates $s_1, \ldots, s_{n-1}, u$.
\end{enumerate} 
\end{lemma}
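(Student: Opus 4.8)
The plan is to mimic the structure of the proof of Lemma \ref{lem:Dth-power-hypersurf}. Part (1) is a finite convex-geometry computation that I would verify by hand or, as the authors suggest for the companion lemma, by invoking the \texttt{hilbertBasis} routine in the \texttt{Polyhedra} package of Macaulay2 \cite{M2}; the claim is that the Hilbert basis of $(\eta_D^{(n)})^\vee \cap M$ consists exactly of the lattice points $e_n^* + \sum_{i=1}^{n-1} a_i e_i^*$ with all $a_i \ge 0$ and $\sum a_i \le D$. To check this directly, I would first compute $(\eta_D^{(n)})^\vee$: a functional $m = \sum m_i e_i^*$ lies in the dual cone if and only if $\langle m, e_i\rangle = m_i \ge 0$ for $1 \le i \le n-1$ and $\langle m, -e_1 - \cdots - e_{n-1} + D e_n\rangle = D m_n - \sum_{i=1}^{n-1} m_i \ge 0$. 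So $(\eta_D^{(n)})^\vee \cap M = \{m : m_i \ge 0 \ (1\le i < n),\ \sum_{i<n} m_i \le D m_n\}$. Any such $m$ with $m_n \ge 1$ decomposes as a sum of the proposed generators (peel off one generator at a time, reducing $m_n$ by $1$ and distributing the $m_i$ among the slots while keeping each partial sum $\le D$), and if $m_n = 0$ then all $m_i = 0$; conversely each proposed vector is irreducible because its last coordinate is $1$, which cannot be split as a sum of two nonzero semigroup elements (each of which has nonnegative last coordinate, and last coordinate $0$ forces the vector to be $0$). This establishes (1).

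For part (2), I would run the same argument the authors use in Lemma \ref{lem:Dth-power-hypersurf}(2). Assign to $m = \sum m_i e_i^*$ the Laurent monomial $\chi^m = s_1^{m_1}\cdots s_{n-1}^{m_{n-1}} u^{m_n}$. Under this identification the algebra generators $\chi^m$ for $m$ in the Hilbert basis from (1) are precisely the monomials $s_1^{a_1}\cdots s_{n-1}^{a_{n-1}} u$ with $a_i \ge 0$ and $\sum_{i<n} a_i \le D$; multiplying each by $u^{D-1-\sum a_i}$ — wait, more cleanly: I claim $\F[(\eta_D^{(n)})^\vee \cap M]$ equals the subring of $\F[s_1,\dots,s_{n-1},u]$ generated by all monomials of total degree $D$ in the $n$ variables $s_1,\dots,s_{n-1},u$. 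Indeed a monomial $s_1^{b_1}\cdots s_{n-1}^{b_{n-1}} u^{b_n}$ of total degree $D$ corresponds to $m$ with $m_i = b_i$ and $m_n = b_n$; the dual-cone inequality $\sum_{i<n} m_i \le D m_n$ reads $D - b_n \le D b_n$, i.e.\ $b_n \ge D/(D+1)$, hence $b_n \ge 1$, which indeed holds for at least the generator $u^D$ but not for, say, $s_1^D$. So that naive description is wrong, and I should instead argue as in the hypersurface lemma: the subring generated by the Hilbert-basis monomials $\{s^{a} u : \sum a_i \le D\}$ is, after the substitution $t_i = s_i/u^{?}$...

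Let me restructure: the cleanest route to (2), matching the paper's style, is a dimension/UFD argument. The subring $A := \F[(\eta_D^{(n)})^\vee \cap M] \subseteq L$ is generated over $\F$ by the $\binom{D+n-1}{n-1}$ monomials $\chi^m$, $m$ in the Hilbert basis. One checks these are exactly the images, under the birational substitution $s_i \mapsto s_i,\ u \mapsto u$, of the degree-$D$ monomials in $\F[s_1,\dots,s_{n-1},u]$ after rescaling by a power of $u$ — concretely, $\chi^{e_n^* + \sum a_i e_i^*} = (s_1^{a_1}\cdots s_{n-1}^{a_{n-1}} u^{1}) $ and one verifies that the multiplicative monoid these generate is isomorphic to the monoid generated by all degree-$D$ monomials in $n$ variables via $s_i^{a_i} u \leftrightarrow s_i^{a_i} u^{D - \sum a_i}$ (this is a monoid isomorphism since the exponent of $u$ is determined by the $a_i$). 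Hence $A \cong V_D$, the $D$-th Veronese of $\F[s_1,\dots,s_{n-1},u]$. Alternatively, present a surjection $\phi$ from a polynomial ring on $\binom{D+n-1}{n-1}$ variables onto $A$, note $\dim A = n$ equals $\dim V_D$, and match kernels using that both are the toric/determinantal ideal of the same lattice point configuration.

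The step I expect to be the main obstacle is pinning down the isomorphism in (2) cleanly: unlike the hypersurface case where the kernel of the presenting map is a principal height-one prime cut out by the visible Eisenstein polynomial $z^D - x_1\cdots x_n$, the Veronese ring $V_D$ for $n \ge 3$ has a presentation ideal generated by many quadrics (the $2\times 2$ minors), so I cannot simply exhibit one obvious relation. The right move is to avoid writing the presentation ideal explicitly: instead observe that $V_D = \F[s_1,\dots,s_{n-1},u]^{(D)}$ is itself the semigroup ring $\F[\Lambda \cap \Z^n]$ where $\Lambda = \{v \in \N^n : \sum v_i \equiv 0 \pmod D\}$, establish a lattice isomorphism $\Z^n \to M$ carrying $\Lambda$ isomorphically onto $(\eta_D^{(n)})^\vee \cap M$ (sending the degree-$D$ monomial with exponent vector $(a_1,\dots,a_{n-1},a_n)$, $\sum a_i = D$, to $\sum_{i<n} a_i e_i^* + e_n^*$, which is injective and surjective onto the Hilbert-basis-generated submonoid by part (1)), and conclude that the two semigroup rings are isomorphic as $\F$-algebras, hence $\F[(\eta_D^{(n)})^\vee \cap M] \cong V_D$. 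Once the two lemmas \ref{lem:Dth-power-hypersurf} and \ref{lem:Vero-stuff} are in place, Theorem \ref{thm: Veronese hypersurf} follows by combining Theorem \ref{thm: Dth power hypersurface 1} with the analogous Veronese computation and the fact that $S = \F[x_1,\dots,x_n]$ is a polynomial ring in which all monomial primes are complete intersections.
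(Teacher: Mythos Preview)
Your approach is essentially the same as the paper's. For (1) the paper simply defers to the \texttt{hilbertBasis} routine in Macaulay2, whereas you actually compute the dual cone $\{m : m_i \ge 0,\ \sum_{i<n} m_i \le D m_n\}$ and verify the Hilbert basis by hand (irreducibility because the last coordinate is $1$, spanning by peeling off generators); this is strictly more than the paper does and is correct. For (2), after your false starts, you land on exactly the correspondence the paper uses: $s_1^{a_1}\cdots s_{n-1}^{a_{n-1}} u \longleftrightarrow s_1^{a_1}\cdots s_{n-1}^{a_{n-1}} u^{D-\sum a_i}$, observed to be a monoid isomorphism between the Hilbert-basis monoid and the monoid of degree-$D$ monomials in $n$ variables. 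The paper's proof of (2) is just that one line plus the resulting presentation; your alternative lattice-isomorphism framing is equivalent. You should excise the exploratory dead ends (the attempt to read off degree-$D$ monomials directly from the dual-cone inequality) before writing this up.
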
 
\begin{proof}
The reader can use the hilbertBasis algorithm implemented in the \texttt{Polyhedra} package in Macaulay2 \cite{M2} to check (1). Given (1), as an algebra over $\F$, we have
\begin{align*}
\F [(\eta_D^{(n)})^\vee \cap M] &= \F \left[s_1^{a_1} \cdots s_{n-1}^{a_{n-1}} u \colon \mbox{each $a_i \ge 0$, }0 \le \sum_{i=1}^{n-1} a_i  \le D \right]  
\\&\cong \frac{\F[x_{(a_1, \ldots, a_{n-1})} \colon \mbox{each $a_i \ge 0$, } 0 \le \sum_{i=1}^{n-1} a_i  \le D ]}{(x_e x_f - x_g x_h \colon e+f = g+ h \in \N^{n-1})}.
\end{align*}
Within the polynomial ring $\F[s_1, \ldots, s_{n-1}, u]$, applying the correspondence $$s_1^{a_1} \cdots s_{n-1}^{a_{n-1}} u \longleftrightarrow s_1^{a_1} \cdots s_{n-1}^{a_{n-1}} u^{D - a_1 - \cdots - a_{n-1}}$$  takes the generators in the presentation of $\F [(\eta_D^{(n)})^\vee \cap M]$ and recovers the usual presentation of $V_D$ in terms of degree 
$D$ monomials in $n$ variables. Therefore, (2) holds: $\F [(\eta_D^{(n)})^\vee \cap M] \cong V_D$.   \qed 
\end{proof}


We use the toric presentation of $V_D$ to deduce the following refinement of Theorem \ref{thm: Veronese hypersurf} for $V_D$:
\begin{theorem}\label{thm: Veronese rings are optimal 1}
\textit{Over an arbitrary field $\F$, take the $D$-th Veronese subring $V_D \subseteq \F[s_1, \ldots, s_{n-1}, u]$ and $P$ one of the monomial prime ideals of $V_D$.
When $P$ is nonzero and nonmaximal, $P^{(E)} \subseteq P^r$ if and only if $r \le \lceil E/D \rceil.$ In particular, $P^{(Dr)} \subseteq P^{(D(r -1) + 1)} \subseteq P^r$ for all $r>0$   
and the right-hand containment is sharp.} 
\end{theorem}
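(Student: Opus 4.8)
The plan is to mimic the structure of the proof of Theorem \ref{thm: Dth power hypersurface 1}, exploiting the explicit toric presentation from Lemma \ref{lem:Vero-stuff}. First I would reduce to a standard family of monomial primes: by applying coordinate permutations (here, permutations of the variables $s_1,\ldots,s_{n-1},u$ realized on the Hilbert basis), every nonzero nonmaximal monomial prime of $V_D$ is equivalent to one of an explicit list indexed by faces $\tau$ of $\eta_D^{(n)}$. I would pick a convenient face $\tau$ of dimension $j$ (say, the one spanned by $e_1,\ldots,e_j$ for $1\le j\le n-1$, or including the special ray) and write down $P:=P_\tau$ in terms of the algebra generators $x_{(a_1,\ldots,a_{n-1})}\leftrightarrow s_1^{a_1}\cdots s_{n-1}^{a_{n-1}}u^{D-a_1-\cdots-a_{n-1}}$. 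Since $P$ is monomial, $P^{(E)}=P^E:_{V_D}(\mathcal M)^\infty$ is a saturation against a complement monomial $\mathcal M$ (Lemma \ref{lem: symb powers as saturations 001}), so everything reduces to a combinatorial count on exponent vectors of monomials in $\F[s_1,\ldots,s_{n-1},u]$ that lie in $V_D$.

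Next I would establish the inclusion $P^{(E)}\subseteq P^{\lceil E/D\rceil}$. A monomial $\mu=s_1^{b_1}\cdots s_{n-1}^{b_{n-1}}u^{b_n}\in V_D$ (so $b_1+\cdots+b_n\equiv 0\pmod D$ and the total degree is a multiple of $D$) lies in $P^k$ precisely when it can be written as a product of $k$ Veronese generators each divisible, in the appropriate sense dictated by $\tau$, by a "variable in $P$." Saturating against $\mathcal M$ removes the constraint coming from the complementary variables, so membership of $\mu$ in $P^{(E)}$ translates into a single linear inequality on the exponents $b_i$ (the analogue of \eqref{eqn: inequality 001}); dividing by $D$ and taking ceilings yields membership in $P^{\lceil E/D\rceil}$. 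Combined with Lemma \ref{lem: equivalence of symb power containments 001}, this gives $P^{(D(r-1)+1)}\subseteq P^r$ for all $r>0$, and the trivial chain $P^{(Dr)}\subseteq P^{(D(r-1)+1)}$ follows from monotonicity of symbolic powers.

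For the converse --- that $P^{(E)}\subseteq P^r$ forces $r\le\lceil E/D\rceil$, equivalently the sharpness assertion --- I would exhibit, for each $r$, a single explicit monomial in $P^{(D(r-1)+1)}$ that is not in $P^r$. The natural candidate is built from a "socle-type" monomial supported on the variables of $P$: take a generator corresponding to an exponent vector with $\sum a_i=D$ concentrated in the $\tau$-directions, raised to the power $r-1$, times one generator contributing the single extra unit of symbolic order; one checks the total degree is $D(r-1)+1$ in the relevant grading while any expression as a product of $r$ elements of $P$ would demand degree at least $r$ there, a contradiction once $D\ge 2$. The main obstacle I anticipate is bookkeeping: correctly identifying which Veronese generators "belong to" $P_\tau$ under the $\leftrightarrow$ correspondence (the exponent on $u$ is the slack variable $D-\sum a_i$, so a naive reading of which generators are in $P$ can be off), and then pinning down the exact linear inequality that replaces \eqref{eqn: inequality 001} so that the ceiling comes out to be exactly $\lceil E/D\rceil$ rather than something weaker like $\lceil E/(D j)\rceil$ as in the hypersurface case. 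Getting the sharp example to simultaneously certify non-membership in $P^r$ while lying in $P^{(D(r-1)+1)}$ is the crux, and I expect it to hinge on choosing the witness monomial to have minimal possible $P$-order equal to its "degree" so that the two filtrations genuinely separate.
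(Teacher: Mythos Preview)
Your plan is correct and matches the paper's argument: saturate against a complement monomial, extract from membership in $P^{(E)}$ the single linear inequality that the total $(s_1+\cdots+s_k)$-degree is at least $E$ (this is the paper's inequality \textbf{(2)}), divide by $D$ to land in $P^{\lceil E/D\rceil}$, and then exhibit a witness for sharpness. Two minor points of comparison: the paper's sharpness witness is simply $(s_j^{D}u)^{\lceil E/D\rceil}$, whose standard $\N$-degree in $V_D$ is exactly $\lceil E/D\rceil$ and hence cannot lie in any higher ordinary power; and your reduction by permuting all of $s_1,\ldots,s_{n-1},u$ is actually cleaner than what the paper does---there the ``special'' height-one prime $P_{(-1,\ldots,-1,D)}$ is handled not by a variable swap but by explicit order-two $\F$-algebra automorphisms $\phi_j$ of the toric presentation that carry it to $P_j$.
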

\begin{proof}
Picking up from Lemma \ref{lem:Vero-stuff}, for all $1 \le j \le n-1$, we define height one primes  
$$P_j  = P_{e_j} = \left(s_1^{a_1} \cdots s_{n-1}^{a_{n-1}} u \colon a_j > 0 , \mbox{ and }1 \le \sum_{b=1}^{n-1} a_b \le D\right)V_D.$$  Then by the \textbf{Minkowski sum-ideal sum decomposition}  \eqref{eqn: Minkowski sum-ideal sum decomp 001} $P_{j_1 < \cdots < j_k} := P_{j_1}+\cdots + P_{j_k}$ is a prime of height $1 \le k \le n-1$ for each size-$k$ subset $j_1 < \ldots < j_k$ of $[n-1] = \{1, \ldots, n-1\}$.  In particular, we focus on $P_{1< \cdots < k} = (s^{\bar{a}} u \colon \bar{a} \in T_k)V_D$, where $$T_k :=\left\lbrace \bar{a} = (a_1, \ldots, a_{n-1}) \in \N^{n-1} \colon 
1 \le \sum_{b=1}^k a_b  \le \sum_{b=1}^{n-1} a_b  \le D \right\rbrace.$$ 
Any monomial $g$ in $P_{1 < \cdots < k}^{(E)} \subseteq P_{1 < \cdots < k} \subseteq P_{1 < \cdots < {n-1}}$ belongs to  $P_{1 < \cdots < k}$ and so decomposes (for some $B \ge 0$) as  
$$g = u^B \prod_{\bar{a} \in T_{n-1}} (s^{\bar{a}} u)^{i_{\bar{a}}} =   \prod_{\bar{a} \in T_k} (s^{\bar{a}} u)^{i_{\bar{a}}}  \left(u^B \prod_{\bar{a} \in T_{n-1} - T_k } (s^{\bar{a}} u)^{i_{\bar{a}}} \right) \in P_{1 < \cdots < k}^{\sum_{\bar{a}\in T_k} i_{\bar{a}} } .$$ 
Note that this factorization of $g$ into two monomial pieces ($T_k$ versus $T_{n-1} - T_k$) is unique up to applying the Veronese relations $s^{\bar{e}} u \cdot s^{\bar{f}} u = s^{\bar{g}} u  \cdot s^{\bar{h}} u \quad (\bar{e} + \bar{f} =  \bar{g} + \bar {h}).$ 
Setting the monomial $m := u \cdot  \prod_{\bar{a} \in T_{n-1} - T_{k}} s^{\bar{a}} u \in V_D$ to be the \textbf{product} of the monomials $s_1^{a_1} \cdots s_{n-1}^{a_{n-1}} u$ with $a_j = 0$ for all $1 \le j \le k (\le n-1)$, we have 
$P_{1< \cdots < k}^{(E)} = P_{1< \cdots < k}^E :_{V_D} (m)^\infty,$ and the monomial $g$ is in $P_{1< \cdots < k}^{(E)}$ precisely when for all $T \gg 0$, 
$$g \cdot m^T=  \left(u^{B+T} \prod_{\bar{a} \in T_k} (s^{\bar{a}} u)^{i_{\bar{a}}} \right) \prod_{\bar{a} \in T_{n-1} - T_k } (s^{\bar{a}} u)^{i_{\bar{a}} + T}  \in P_{1 < \cdots < k}^{E}  .$$ 
In particular, the monomial in parentheses is in $P_{1< \cdots < k}^E$ so it is a multiple of some $E$-fold product of generators of  $P_{1< \cdots < k} = (s^{\bar{a}} u \colon \bar{a} \in T_k)V_D$. Thus we infer that two inequalities must hold, signifying we have enough $u$'s and $s_j$'s ($1 \le j \le k$) at our disposal, respectively, to feasibly form such a $E$-fold product. 
These inequalities are \textbf{(1)} $\sum_{\bar{a}\in T_k} i_{\bar{a}} + B+ T \ge E$,  
 and \textbf{(2)} the sum $$\sum_{\bar{a} \in T_k} i_{\bar{a}} (a_1 + \cdots + a_k) = \sum_{j=1}^D \ell_j \cdot j \ge E,$$ where $\ell_j := \sum_{\bar{a} \in T_{k, j}} i_{\bar{a}}$, 
$T_{k,j} := \{ \bar{a} \in T_k \colon \mbox{ the partition }a_1 + \cdots + a_k = j \}$. 
Indeed, $$E  \le \sum_{j=1}^D \ell_j \cdot j \le D \left(\sum_{j=1}^D \ell_j \right) \Longrightarrow \sum_{j=1}^D \ell_j  \ge  \lceil E/D \rceil , $$ so \textbf{(2)} implies that  \textbf{(3)} 
$\sum_{\bar{a} \in T_k} i_{\bar{a}}  = \sum_{j=1}^D \ell_j \ge \lceil E/D \rceil$.\footnote{Together, inequalities \textbf{(1)} and \textbf{(3)} are equivalent to 
$$\sum_{\bar{a} \in T_k} i_{\bar{a}}  = \sum_{j=1}^D \ell_j \ge \max \{\lceil E/D \rceil  , E - (B+T)\} = \lceil E/D \rceil \mbox{ for all }T \ge E.$$} 
For any monomial $g \in P_{1< \cdots < k}^{(E)}$, \textbf{(3)} implies that $g \in P_{1< \cdots < k}^{\lceil E/D \rceil}$. Thus  $P_{1< \cdots < k}^{(E)} \subseteq P_{1< \cdots < k}^{\lceil E/D \rceil}$ for all $E > 0$. 

Additionally if we consider $R$ with its \textbf{standard $\N$-grading}, then the minimal degree of a monomial (e.g., a monomial generator) in $P_{1< \cdots < k}^r$ is $r$. Noticing that for $1 \le j \le k $, the degree 
$\lceil E/D \rceil$ monomial $(s_j^D u)^{\lceil E/D \rceil} \in P_{1< \cdots < k}^E : (u^{(E +1) - \lceil E/D \rceil}) \subseteq  P_{1< \cdots < k}^E : (m^{(E +1) - \lceil E/D \rceil}) \subseteq  P_{1< \cdots < k}^{(E)}$, we obtain the only-if part of: for each $1 \le  k \le n$, $P_{1< \cdots < k}^{(E)} \subseteq P_{1< \cdots < k}^r \mbox{ if and only if }r \le \lceil E/D \rceil .$  

Setting $E = Dr-(D-1) = D(r-1)+1$, we have $\lceil E/D  = (r-1) + 1/D \rceil = r$, so that $P_{1< \cdots < k}^{(Dr-(D-1))} \subseteq P_{1< \cdots < k}^r$ for all $r>0$ and this containment is sharp. 

In review, our argument \textbf{does not} depend crucially on which size-$k$ index subset $j_1 < \ldots < j_k$ of $[n] = 
\{1, 2, \ldots, n\}$ we worked with; going with $1 < 2 < \ldots < k$ merely simplifies 
 notation. In other words, in applying suitable permutations of the algebra generators for $V_D$, one obtains the above characterization of ideal containment for all of the 
monomial prime ideals in the ring \textbf{having one of the $P_j$ as an ideal summand.} 
To handle monomial primes having the height one prime 
$$P_{(-1, \ldots, -1, D)} = \left(s_1^{a_1} \cdots s_{n-1}^{a_{n-1}} u \colon 0 \le \sum_{i=1}^{n-1} a_i  \le D-1 \right)$$ as a summand, we use the $\F$-algebra isomorphisms 
 $\phi_j \colon V_D \to V_D$ ($1 \le j \le n-1$) under which a monomial algebra generator
$g = s_1^{a_1} \cdots s_j^{a_j} \cdots s_{n-1}^{a_{n-1}} u$ with $0 \le A: = \sum_{i=1}^{n-1} a_i  \le D$ is sent to 
\begin{align*}
\phi_j (g) = \begin{cases}s_1^{a_1} \cdots s_j^{D-A} \cdots s_{n-1}^{a_{n-1}} u &\mbox{ if }A \le D-1 \mbox{ and } a_j = 0\\ 
 s_1^{a_1} \cdots s_j^{0} \cdots s_{n-1}^{a_{n-1}} u &\mbox{ if }A=D \mbox{ and } a_j > 0\\ 
 g &\mbox{ if }A \le D-1 \mbox{ and } a_j > 0 \\ 
 g &\mbox{ if }A=D \mbox{ and } a_j = 0. \end{cases}
\end{align*}
We note that $\phi_j^2 = \phi_j \circ \phi_j$ is the identity, and the height one prime $\phi_j (P_{(-1, \ldots, -1, D)}) = P_j$: indeed, when $h=  s_1^{a_1} \cdots s_j^{a_j} \cdots s_{n-1}^{a_{n-1}} u$ is a generator of $P_j$, $a_j > 0$; when $A \le D-1$, $h = \phi_j (h)$, or else $D-A = 0$,   
$a_j = D - \left(\sum_{1 \le i \neq j \le n-1} a_i \right) > 0 $, and $h = \phi_j (g) $ where $g = s_1^{a_1} \cdots s_j^{0} \cdots s_{n-1}^{a_{n-1}} u \in P_{(-1, \ldots, -1, D)}$. 
Moreover, 
 we conclude that a (sharp) containment $Q^{(m)} \subset Q^r$ for any monomial prime $Q$ with $P_j$ as a summand translates under $\phi_j$ to a (sharp) containment $(Q')^{(m)} \subset (Q')^r$ for a monomial prime $Q'$ of the same height as $Q$, with $P_{(-1, \ldots, -1, D)}$ replacing $P_j$ as an ideal summand. Having analyzed ideals with one of the  $P_j$ as a summand quite thoroughly, this final observation completes the proof. \qed 
\end{proof}

As advertised in the introduction, we want to close by drawing a connection between Lemma \ref{thm: du Val bound 1} and Theorems \ref{thm: Dth power hypersurface 1} and \ref{thm: Veronese rings are optimal 1}, e.g., to see that the containments in the lemma can be tight by example.

\begin{remark}\label{rem:class-group-presentation}
With notation as in Theorem \ref{thm: exact sequence 00}, we note that if $C \subseteq N_\RR$ is a full pointed rational polyhedral cone, then we have the following presentation for the divisor class group: 
$$\operatorname{Cl}(\F[C^\vee \cap M]) \cong 
\frac{\bigoplus_{\rho \in \Sigma(1)} \Z \cdot [D_\rho]  }{\langle \sum_{\rho \in \Sigma(1)}  \langle e_i^* , u_\rho\rangle  [D_\rho] =0 \colon 1 \le i \le n  \rangle },$$
where the $e_i^* \in M$ form the dual basis to the basis $e_1, \ldots, e_n \in N$ chosen in $N$. 
\end{remark}

\begin{exm}\label{exm:toric-class-group-computations} We work with the polyhedral cones in the proof of Theorem \ref{thm: Veronese hypersurf}, showing that $\mbox{Cl}(H_D) \cong (\Z/D\Z)^{n-1}$ and $\mbox{Cl}(V_D) \cong \Z / D\Z$. Although these class group facts are well known in certain circles and can be deduced by other means $($see e.g., \cite{SinghSpiroff000}$)$, for completeness of exposition we include  succinct computations. 
\begin{enumerate}
\item The cone $\sigma_{D}^{(n)} \subseteq N_\RR$ has ray generators  $f_i = D e_i + e_n$ for $1 \le i < n$ and 
$e_n$, and 
\begin{align*}\mbox{Cl}(\F[(\sigma_{D}^{(n)})^\vee \cap \Z^n ]) &\cong 
\frac{ \Z \cdot \mathbf{[D_{e_n}]} \oplus \bigoplus_{i=1}^{n-1} \Z \cdot [D_{f_i}] }{\langle  D [D_{f_i}]  = 0 \mbox{ }(1 \le i < n),     \mathbf{[D_{e_n}] =  -  [D_{f_1}] - \cdots - [D_{f_{n-1}}] } \rangle} \\
&\cong 
\frac{\Z \cdot \mathbf{ - [D_{f_1}] - \cdots - [D_{f_{n-1}}]} \oplus \bigoplus_{i=1}^{n-1} \Z \cdot [D_{f_i}] }{\langle  D [D_{f_1}]  = 0 , \ldots, D [D_{f_{n-1}}]  = 0   \rangle} \\
&=  \frac{\bigoplus_{i=1}^{n-1} \Z \cdot [D_{f_i}] }{\langle  D [D_{f_1}]  = 0 , \ldots, D [D_{f_{n-1}}]  = 0   \rangle} \\ &\cong \boxed{(\Z/ D \Z)^{n-1} .} 
\end{align*}
\item  The cone $\eta_{D}^{(n)} \subseteq N_\RR$ has ray generators  $e_i$ for $1 \le i < n$ and 
$f_n = D e_n - \sum_{i=1}^{n-1} e_i$, and 
\begin{align*}\mbox{Cl}(\F[(\eta_{D}^{(n)})^\vee \cap \Z^n ]) &\cong 
\frac{\Z \cdot \mathbf{[D_{f_n}]} \oplus \bigoplus_{i=1}^{n-1} \Z \cdot [D_{e_i}] }{\langle  \mathbf{[D_{e_i}] -  [D_{f_n}] = 0 \mbox{ }(1 \le i < n)}, \mbox{ } D  [D_{f_n}] = 0 \rangle} \\ 
&\cong 
\frac{\Z \cdot  [D_{f_n}]  }{\langle  D [D_{f_n}]  = 0 \rangle} \\ &\cong \boxed{(\Z/ D \Z) .} 
\end{align*}
\end{enumerate}
\end{exm}


\section{Lingering Questions related to Theorem 1.2}\label{section: Finale 1}

To summarize, we have deduced two existence criteria for uniform Harbourne-Huneke bounds.  Lemma \ref{thm: du Val bound 1} holds for ideals of pure height one in a Noetherian normal domain. 
And Theorem \ref{thm: finite tensor products 000} holds for monomial primes in finite tensor products of normal toric rings; 
we deduced Theorem \ref{thm: Veronese hypersurf} to increase the range of examples that can be used as tensor factors. These criteria cover a prodigious class of normal toric rings.  
 We include the following illustrative example: 
\begin{exm}
Let $p_i \in \mathbb{N}$ be the $i$-th prime number, $R_i$  the $p_i$-th Veronese subring of $\mathbb{F}[X_{i , 1}, \ldots, X_{i , 14641}]$. Set $$R(n) = (\bigotimes_{i=1}^n)_{\mathbb{F}} R_i, \quad \sigma(n)= \prod_{i=1}^n p_i \mbox{ $($the \textbf{primorial function}$)$}.$$ 
One can compute that $\operatorname{Cl}(R(n)) \cong \mathbb{Z} / \sigma(n)$ via toric divisor theory, so Lemma \ref{thm: du Val bound 1} says that 
$$\mathfrak{q}^{(\sigma(n) (r-1) + 1)} \subseteq \mathfrak{q}^r$$  for all ideals $\mathfrak{q} \subseteq R(n)$ of pure height one, and all $r> 0$. Also, $D = p_n$ in Theorem \ref{thm: finite tensor products 000}, covering \textbf{all $2^{14641 n}$ monomial primes in $R(n)$}. The multiplier for monomial primes climbs much slower than the multiplier in pure height one as $n$ climbs to infinity.  
\end{exm}
We close 
 with a few natural lines for further investigation. 

\begin{enumerate}

\item Does the conclusion of Theorem \ref{thm: finite tensor products 000} extend to monomial primes in any simplicial toric ring? Can we identify a candidate mechanism (e.g., group-theoretic) to help explain and verify these Harbourne-Huneke bounds in height two or higher for a larger class of ideals than monomial primes? 

\item Given the role of tensor products in our manuscript, do analogues of Theorems \ref{thm: finite tensor products 000} and \ref{thm: finite tensor products 001} hold for other graded ring constructions in the toric setting, such as Segre products?

\end{enumerate}

\bibliographystyle{amsplain}

\bibliography{USTPFlatExt}

\end{document}